\def\XXint#1#2#3{{\setbox0=\hbox{$#1{#2#3}{\int}$}
\vcenter{\hbox{$#2#3$}}\kern-.5\wd0}}
\numberwithin{equation}{section}
\newcommand{\mbf}{\mathbf}
\newcommand{\mbb}{\mathbb}
\newcommand{\mf}{\mathfrak}
\newcommand{\mc}{\mathcal}
\theoremstyle{plain}
\newtheorem{thm}{Theorem}[section]
\newtheorem{thm-defn}{Theorem/Definition}[section]
\newtheorem{lem}[thm]{Lemma}
\newtheorem{lem-defn}[thm]{Lemma/Definition}
\newtheorem{prop}[thm]{Proposition}
\newtheorem{cor}[thm]{Corollary}
\newtheorem{prop-defn}[thm]{Proposition-Definition}
\newtheorem{defn}[thm]{Definition}
\newtheorem{thm-alg}[thm]{Theorem/Algorithm}
\begin{document}

 \title{Instanton moduli space, stable envelopes and quantum difference equations}
  \author{Tianqing Zhu}
  \address{Yau Mathematical Sciences Center}
\email{ztq20@mails.tsinghua.edu.cn}

  \date{}

  \maketitle
\begin{abstract} In this article we prove the degeneration limit of the quantum difference equations of instanton moduli space for both algebraic one and the Okounkov-Smirnov geometric one is the Dubrovin connection for the instanton moduli space. As an application, we prove that the Maulik-Okounkov quantum algebra of Jordan type is isomorphic to the quantum toroidal algebra $U_{q_1,q_2}(\hat{\hat{\mf{gl}}}_1)$ up to some centres.
\end{abstract}

\tableofcontents
\section{\textbf{Introduction}}
\subsection{Quantum difference and differential equations of the instanton moduli space}

In this paper we compare the degeneration limit of both algebraic and geometric quantum difference equation of the instanton moduli space $M(n,r)$, i.e. the Nakajima quiver varieties of Jordan type.

The algebraic quantum difference equation for the instanton moduli space is written as follows \cite{OS22}:
\begin{equation}
\Psi(pz)=\mbf{M}_{\mc{O}(1)}(z)\Psi(z),\qquad\Psi(z)\in K_{T}(M(n,r))_{loc}\otimes_{\mbb{Q}}\mc{K}_z
\end{equation}

\begin{equation}
\begin{aligned}
\mbf{M}_{\mc{O}(1)}(z)=&\mc{O}(1)\prod^{\leftarrow}_{\substack{-1\leq a/b<0\\ b\leq n}}\mbf{B}_{a/b}(z)\\
&\mc{O}(1)\prod^{\leftarrow}_{\substack{-1\leq a/b<0\\ b\leq n}}:\exp(-\sum_{k=1}^{\infty}\frac{n_k}{1-z^{-kb}p^{ka}}P_{kb,ka}P_{-kb,-ka}):
\end{aligned}
\end{equation}

Here $P_{kb,ka}$ are the generators of the quantum toroidal algebra $U_{q_1,q_2}(\hat{\hat{\mf{gl}}}_1)$. 

The geometric quantum difference equation has the same form as the algebraic quantum difference equation, and we only replace the monodromy operators $\mbf{B}_{a/b}(z)$ by the geometric monodromy operators $\mbf{B}_{a/b}^{MO}(z)$. This is the quantum difference equation constructed via the MO quantum affine algebra $U_{q}(\hat{\mf{g}}_{Q})$ of Jordan type. 

The geometric quantum difference equation has the regular solution as the $K$-theoretic capping operator for the instanton moduli space. The computation for the capping operator of the instanton moduli space has been done in \cite{D22}. 

For the quantum equivariant cohomology of the instanton moduli space, it has a natural connection called the Dubrovin connection. It can be written as \cite{MO12}:
\begin{equation}
\nabla_{z}:=z\frac{d}{dz}-\mc{O}(1)*_{z}\in\text{End}(H_{T}^*(M(n,r)))\otimes\mbb{C}((z))
\end{equation}

Here $\mc{O}(1)*_{z}$ is the quantum multiplication of the tautological line bundle $\mc{O}(1)$:

\begin{align}
\mc{O}(1)*_z(-)=c_1(\mc{O}(1))\cup(-)+(t_1+t_2)\sum_{n>0}\frac{nz^n}{1-z^n}\beta_{-n}\beta_{n}
\end{align}

The Dubrovin connection is a meromorphic connection over a trivial bundle on $\mbb{P}^1$ with regular singularities on
\begin{align}
\text{Sing}=\{0,\infty,e^{\frac{2\pi k}{n}}\}\subset\mbb{P}^1
\end{align}

The flat section for the Dubrovin connection is important in the theory of the equivariant quantum cohomology of the instanton moduli space. It was first discovered by Okounkov and Pandharipande in \cite{OP10}\cite{OP10-2} in the case of the Hilbert scheme $\text{Hilb}_{n}(\mbb{C}^2)$ of points on $\mbb{C}^2$.

\subsection{Main result}

Our first main result is that the degeneration limit of the algebraic and geometric quantum difference equations over the instanton moduli space coincides with the Dubrovin connection.

\begin{thm}{(See Theorem \ref{degeneration-thm-instanton} \ref{main-theorem-1})}
The degeneration limit of the geometric and algebraic quantum difference equation for the instanton moduli space $M(n,r)$ over $K_{T}(M(n,r))$ coincides with the Dubrovin connection over $H_{T}^*(M(n,r))$.
\end{thm}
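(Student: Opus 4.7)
The plan is to introduce a small parameter $\hbar$ by writing $p=e^{\hbar}$ and exponentiating equivariant parameters in $\hbar$, pass from $K_T$ to $H_T^*$ via the Chern character, and match both sides of the $q$-difference equation order by order as $\hbar\to 0$. A flat section $\Psi(z)$ satisfies $\Psi(pz)-\Psi(z)=\hbar\, z\partial_z\Psi(z)+O(\hbar^2)$. Expanding the monodromy as
\begin{equation*}
\mbf{M}_{\mc{O}(1)}(z)=1+\hbar\bigl(c_1(\mc{O}(1))\cup+\Phi(z)\bigr)+O(\hbar^2),
\end{equation*}
the theorem reduces to the identification $\Phi(z)=(t_1+t_2)\sum_{n>0}\frac{nz^n}{1-z^n}\beta_{-n}\beta_n$, i.e.\ the quantum correction term of $\mc{O}(1)*_z$ in (1.5).

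First I would handle the K\"ahler prefactor $\mc{O}(1)$ via the Chern character, producing the classical cup product $c_1(\mc{O}(1))\cup$ at order $\hbar$. For each $\mbf{B}_{a/b}(z)$, expanding the normally ordered exponential shows that at order $\hbar$ only the linear term $-\sum_k \frac{n_k}{1-z^{-kb}p^{ka}}\, P_{kb,ka}P_{-kb,-ka}$ survives: higher powers of the exponential and the commutator corrections from normal ordering are of order $\hbar^2$. Under the degeneration, the toroidal generators $P_{kb,ka}$ become cohomological Heisenberg generators $\beta_n$ (with a normalization involving $t_1+t_2$), through the standard limit from $U_{q_1,q_2}(\hat{\hat{\mf{gl}}}_1)$ to the cohomological Heisenberg algebra acting on $H_T^*(M(n,r))$.

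The technical heart is a combinatorial identity: after evaluating at $p=1$, the double sum over slopes $-1\leq a/b<0$ with $b\leq n$ and indices $k\geq 1$ must collapse, for each $m>0$, to the single contribution $\frac{mz^m}{1-z^m}\beta_{-m}\beta_m$. Reorganizing by the label $m=kb$, this amounts to summing the residual rational prefactors over all admissible $(a,b,k)$ with $kb=m$ and $-1\leq a/b<0$, yielding the claimed coefficient after a Farey-type regrouping. Verifying this identity and tracking signs, the $t_1+t_2$ prefactor, and the Chern-character conversion is the main obstacle.

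For the geometric QDE, the same strategy applies with $\mbf{B}_{a/b}^{MO}(z)$ in place of $\mbf{B}_{a/b}(z)$. I would use the degeneration of the $K$-theoretic stable envelopes to the cohomological ones and of the geometric $R$-matrices of the MO quantum affine algebra to their cohomological counterparts; the first-order part of $\mbf{B}_{a/b}^{MO}(z)$ is then expressed through cohomological MO generators of Heisenberg type, which match $\beta_n$. The same combinatorial collapse then yields the Dubrovin connection, establishing the theorem simultaneously for the algebraic and geometric equations; as a byproduct, the matching of first-order data for the two monodromies at every slope provides the mechanism by which the isomorphism of the MO algebra of Jordan type with $U_{q_1,q_2}(\hat{\hat{\mf{gl}}}_1)$ (up to centres) will be extracted.
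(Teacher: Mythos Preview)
Your overall architecture matches the paper's: expand in $\hbar$, extract the first-order piece of each wall factor, and collapse the double sum over slopes and $k$ via the divisor identity $\sum_{b\mid m}\phi(b)=m$ (your ``Farey-type regrouping''). Two points deserve sharpening, however, because you have labeled as routine precisely the places where the paper does its real work.

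For the algebraic equation, the sentence ``the toroidal generators $P_{kb,ka}$ become cohomological Heisenberg generators $\beta_n$ \ldots\ through the standard limit'' hides the crux. What must be shown is that the degeneration of $P_{kb,ka}$ is proportional to $\alpha_{kb}$ with a constant \emph{independent of $a$}, and moreover that the product $n_k\,P_{-kb,-ka}P_{kb,ka}$ degenerates to $(t_1+t_2)\,\alpha_{-kb}\alpha_{kb}$ on the nose. If the limiting normalization depended on the slope $a/b$, your combinatorial collapse would not produce the clean coefficient $m/(1-z^{-m})$. The paper establishes this by computing matrix coefficients in the fixed-point basis using the explicit shuffle realization of $P_{k,d}$, comparing first to $P_{ak,0}$ and then to the Nakajima operators $p_{\pm ak}$; this is not a ``standard limit'' one can quote, and you should expect to reproduce an equivalent computation.

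For the geometric equation, your plan to degenerate $K$-theoretic stable envelopes and $R$-matrices to cohomological ones is correct in spirit, but it does not by itself tell you that the resulting cohomological MO wall generators are Heisenberg. The paper closes this by invoking the identification $\mf{g}_Q^{MO}\cong\hat{\mf{gl}}_1$ (Maulik--Okounkov, Botta--Davison) together with a dimension count to pin down each $\mf{n}_w^{MO}$ as the span of $\{\alpha_{kb}\}_{k\ge 1}$, which forces the classical wall $r$-matrix to be $r_{a/b}^{\pm}=\sum_k\alpha_{\pm bk}\otimes\alpha_{\mp bk}$. Without that external input you cannot identify the first-order part of $\mbf{B}_{a/b}^{MO}$, so you should name it explicitly in your argument.
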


This result leads to the concrete description of the monodromy representation at based point $0^+\in\mbb{P}^1$ for the Dubrovin connection of the equivariant cohomology of the instanton moduli space:

\begin{thm}{(See Theorem \ref{monodromy-representation})}
The monodromy representation for the Dubrovin connection of the instanton moduli space is generated by $\mbf{m}((1\otimes S_{\frac{a}{b}})(R_{\frac{a}{b}}^{-})^{-1})\in\mc{B}_{\frac{a}{b}}\subset U_{q_1,q_2}(\hat{\hat{\mf{gl}}}_1)$. 
\end{thm}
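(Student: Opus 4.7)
The plan is to deduce the monodromy description from the first main theorem (Theorem \ref{degeneration-thm-instanton}), transferring the analysis from the Dubrovin connection to the ordered factorisation of the quantum difference equation, where the slope subalgebras $\mc{B}_{a/b}$ already appear explicitly.

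First I would fix the fundamental solution $\Psi_{0^+}(z)$ of the Dubrovin connection, normalised near $0^+\in\mbb{P}^1$, and identify it, via the degeneration limit, with the classical limit of the capping operator solution of the geometric quantum difference equation. Existence and explicit form of the latter are provided by \cite{D22}, and the degeneration limit is controlled by the first main theorem. Consequently the Dubrovin monodromy around a loop encircling $e^{2\pi k/n}\in\mbb{P}^1$ is the degeneration of the corresponding monodromy of the $q$-difference equation, and it suffices to compute the latter wall by wall.

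Next, $\mbf{M}_{\mc{O}(1)}(z)$ is already an ordered product of slope operators $\mbf{B}_{a/b}(z)\in\mc{B}_{a/b}$ indexed by rational slopes in $[-1,0)$. After degeneration, each $\mbf{B}_{a/b}(z)$ should contribute a single local monodromy factor of the Dubrovin connection, corresponding to the chamber wall of slope $a/b$ in the Kähler moduli. Using the normal-ordered exponential formula for $\mbf{B}_{a/b}(z)$, the local monodromy is computed by expanding the geometric series $(1-z^{-kb}p^{ka})^{-1}$ on either side of the wall and taking the ratio of the resulting operators. A Khoroshkin-Tolstoy-type identity then identifies this ratio with $\mbf{m}\bigl((1\otimes S_{a/b})(R_{a/b}^{-})^{-1}\bigr)\in\mc{B}_{a/b}$, where $R_{a/b}^{-}$ is the negative component of the universal $R$-matrix of the slope subalgebra and $\mbf{m}$ is the multiplication map.

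The main obstacle will be pinning down conventions: that the wall-ordering $\prod^{\leftarrow}$ in $\mbf{M}_{\mc{O}(1)}(z)$ corresponds to the correct orientation of a fundamental loop around $0^+$, that the antipode $S_{a/b}$ and the inverse $(R_{a/b}^{-})^{-1}$ sit on the correct tensor factors, and that the degeneration of the Drinfeld factorisation of the universal $R$-matrix remains non-trivial wall by wall. Once these matches are verified, the statement follows since the elements $\mbf{m}((1\otimes S_{a/b})(R_{a/b}^{-})^{-1})$ indexed by slopes $a/b$ exhaust the elementary wall-crossing generators of the monodromy representation based at $0^+$.
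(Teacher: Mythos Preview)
Your overall strategy---reduce the Dubrovin monodromy to data visible in the algebraic quantum difference equation via the degeneration theorem, then read off the wall contributions $\mbf{B}_{a/b}$---is the same as the paper's. But there is a genuine gap in the middle.

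The sentence ``the Dubrovin monodromy around a loop encircling $e^{2\pi k/n}$ is the degeneration of the corresponding monodromy of the $q$-difference equation'' does not parse as stated: a $q$-difference equation has no monodromy in the usual sense, only a \emph{connection matrix} relating the two fundamental solutions $\Psi_0(z)$ and $\Psi_{\infty}(z)$ (both of which the paper constructs; you only build the one near $0^+$). The paper's mechanism is to form $\textbf{Mon}(z)=\Psi_0(z)^{-1}\Psi_\infty(z)$, observe it is $p$-periodic, and then take the limit
\[
\text{Trans}(s)=\lim_{\tau\to 0}\textbf{Mon}(e^{2\pi is},e^{2\pi t_1\tau},e^{2\pi t_2\tau},e^{2\pi i\tau}).
\]
Proposition~\ref{transport-matrix} (whose proof is imported from \cite{S21} and \cite{Z24}) identifies this limit with an explicit ordered product of the operators $\mbf{B}_w^*$, and the local monodromy around the singularity $w$ is then literally $\text{Trans}(s)\,\text{Trans}(s')^{-1}=\mbf{B}_w^*$ for $s,s'$ on either side of $w$.

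Your proposed substitute---``expand the geometric series $(1-z^{-kb}p^{ka})^{-1}$ on either side of the wall and take the ratio''---is not a monodromy computation. Expanding $\mbf{B}_{a/b}(z)$ at $z=0$ versus $z=\infty$ gives you the two limiting values $\text{Id}$ and $\mbf{m}((1\otimes S_{a/b})(R_{a/b}^{-})^{-1})$, but nothing in your outline explains why the analytic continuation of a flat section of the Dubrovin connection around $z=e^{2\pi i a/b}$ picks up exactly this factor. That passage from ``value of the coefficient on two sides'' to ``monodromy of the solution'' is precisely what the connection-matrix argument supplies, and it is not a matter of conventions. Without building $\Psi_\infty$ and invoking (or reproving) Proposition~\ref{transport-matrix}, the argument does not close.
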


The connection to the Okounkov-Smirnov's quantum difference equations lie in the following theorem \cite{OS22}:
\begin{thm}{(See Theorem \ref{geometric-monodromy})}
The monodromy representation for the Dubrovin connection of the instanton moduli space is generated by $\mbf{m}((1\otimes S_{\frac{a}{b}})(R_{\frac{a}{b}}^{MO,-})^{-1})\in U_{q}^{MO}(\mf{g}_{\frac{a}{b}})\subset U_{q}^{MO}(\hat{\mf{g}}_{Q})$.
\end{thm}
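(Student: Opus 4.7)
The plan is to deduce this theorem by combining the Okounkov-Smirnov construction of monodromy for the geometric quantum difference equation with the degeneration result established as Theorem \ref{degeneration-thm-instanton}. The preceding Theorem \ref{monodromy-representation} already gave an algebraic description of the Dubrovin monodromy via $U_{q_1,q_2}(\hat{\hat{\mf{gl}}}_1)$; the present statement is the geometric counterpart expressed through the MO quantum affine algebra, and the strategy is to run the same argument in the MO framework.

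First I would recall from \cite{OS22} the structure of the geometric quantum difference equation: its monodromy operators $\mbf{B}_{a/b}^{MO}(z)$ are built from the slope subalgebras $U_q^{MO}(\mf{g}_{a/b})\subset U_{q}^{MO}(\hat{\mf{g}}_{Q})$, and the connection matrices that compute the K-theoretic monodromy across the walls $a/b\in\mbb{Q}$ in the complexified Kähler space are precisely the images under the evaluation map $\mbf{m}$ of the elements $(1\otimes S_{a/b})(R_{a/b}^{MO,-})^{-1}$. This produces a set of generators for the monodromy representation of the geometric q-difference equation acting on $K_T(M(n,r))_{loc}$.

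Second, I would invoke the degeneration theorem for the geometric quantum difference equation, which identifies its $\hbar\to 0$ limit with the Dubrovin connection on $H_T^*(M(n,r))$ with singular locus $\{0,\infty,e^{2\pi i k/n}\}\subset\mbb{P}^1$. Under this degeneration, regular fundamental solutions of the q-difference equation specialise to regular fundamental solutions of the Dubrovin connection, and the connection matrices at each wall $a/b$ pass to the monodromy transformations of the Dubrovin connection around the corresponding root of unity. Consequently, the Okounkov-Smirnov generators descend to generators of the Dubrovin monodromy, yielding the stated result.

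The main obstacle is the rigorous control of the degeneration limit for the MO quantum algebra elements themselves: unlike in the algebraic case of Theorem \ref{monodromy-representation}, the R-matrices $R_{a/b}^{MO,-}$ are defined geometrically through stable envelopes, so one must show that their $\hbar\to 0$ behaviour remains well-defined as operators on $K_T(M(n,r))_{loc}$ and that the specialisation is compatible with the classical limit of quantum multiplication by $\mc{O}(1)$. Once this compatibility is established, uniqueness of the regular fundamental solution forces the identification of the two monodromy representations, and the theorem follows from transporting the generators through the degeneration.
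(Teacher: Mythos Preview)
Your proposal is correct and follows essentially the same approach as the paper: the paper's own justification of Theorem \ref{geometric-monodromy} is simply the sentence ``Using the similar proof of the main theorem in \cite{Z24-2}\cite{Z24}, we can conclude that \ldots'', i.e.\ exactly what you describe---run the argument of Section~6 (Proposition \ref{transport-matrix} and Theorem \ref{monodromy-representation}) with the algebraic wall $R$-matrices replaced by the MO wall $R$-matrices, using the geometric degeneration Theorem \ref{main-theorem-1} in place of Theorem \ref{degeneration-thm-instanton}. One small remark: in your opening line you cite Theorem \ref{degeneration-thm-instanton} (the algebraic degeneration), but the input you actually need, and correctly invoke in your second paragraph, is Theorem \ref{main-theorem-1} (the geometric/MO degeneration); the control of the $\hbar\to 0$ limit of the MO $R$-matrices that you flag as the main obstacle is handled in Section~3.2 and in the proof of Theorem \ref{main-theorem-1}.
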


Combining these results we can conclude the isomorphism of the MO quantum affine algebra of Jordan type and the quantum toroidal algebra $U_{q_1,q_2}(\hat{\hat{\mf{gl}}}_1)$:

\begin{thm}{(See Theorem \ref{Isomorphism-theorem})}
The MO quantum affine algebra $U_{q}^{MO}(\hat{\mf{g}}_{Q})$ of Jordan quiver type is isomorphic to the quantum toroidal algebra:
\begin{align}
U_{q}^{MO}(\hat{\mf{g}}_{Q})\cong U_{q_1,q_2}(\hat{\hat{\mf{gl}}}_1)
\end{align}
up to some centres.
\end{thm}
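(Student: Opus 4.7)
The strategy is to extract the isomorphism directly from the matching of monodromy representations of the Dubrovin connection supplied by Theorem \ref{monodromy-representation} and Theorem \ref{geometric-monodromy}. These results say that for every rational slope $a/b$, the elements $\mbf{m}((1\otimes S_{a/b})(R_{a/b}^{-})^{-1})\in\mc{B}_{a/b}$ and $\mbf{m}((1\otimes S_{a/b})(R_{a/b}^{MO,-})^{-1})\in U_q^{MO}(\mf{g}_{a/b})$ generate the same monodromy group on $H_T^*(M(n,r))$ for all $n$ and $r$. The plan is to use this coincidence, together with the fact that each algebra is generated (up to centres) by its collection of slope/root subalgebras, to produce a bijection of generators that respects the algebraic structure.

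First, I would define a candidate map slope-by-slope on generators: for each $a/b$, send the MO generator to the corresponding toroidal one. By the two monodromy theorems, both sides act on the tower $\bigoplus_{n,r} H_T^*(M(n,r))$ by identical operators. Using that this tower is a faithful module for both algebras modulo their respective centres --- faithfulness being known for $U_{q_1,q_2}(\hat{\hat{\mf{gl}}}_1)$ from its Heisenberg realization on Fock space, and for $U_q^{MO}(\hat{\mf{g}}_Q)$ from the non-degeneracy of stable envelopes --- the relations on one side are forced to hold on the other. This upgrades the generator-level correspondence to an honest algebra homomorphism modulo centres. I would then argue surjectivity from the Drinfeld-type slope factorization of the quantum toroidal algebra (the $\mc{B}_{a/b}$ together with the Cartan part generate everything), and injectivity from the faithfulness of the action on the instanton tower.

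The principal obstacle is the handling of central elements. Both algebras contain nontrivial central copies --- the horizontal and vertical Heisenberg centres of $U_{q_1,q_2}(\hat{\hat{\mf{gl}}}_1)$ on the one hand, and the central elements produced from the diagonal part of the MO $R$-matrix on the other --- and all of these act trivially on the monodromy representation of the Dubrovin connection. Consequently they cannot be distinguished by Theorems \ref{monodromy-representation} and \ref{geometric-monodromy} alone. Isolating the centres of both sides and matching the remaining quotients is the delicate step, and it is precisely this subtlety that the phrase ``up to some centres'' in the statement acknowledges; resolving it requires a separate analysis of the central characters on a probing module (for instance, the Fock representation) rather than on the full cohomology tower.
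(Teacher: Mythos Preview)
There is a genuine gap. The monodromy theorems hand you, for each wall $w=a/b$, the equality of a \emph{single} operator
\[
\mbf{m}\bigl((1\otimes S_{a/b})(R_{a/b}^{-})^{-1}\bigr)=\mbf{m}\bigl((1\otimes S_{a/b})(R_{a/b}^{MO,-})^{-1}\bigr),
\]
and on the toroidal side this element is $\exp\bigl(-\sum_{k\geq1} n_k P_{bk,ak}P_{-bk,-ak}\bigr)$, a degree-zero combination of all the Heisenberg modes at once. This is not a set of generators; it does not single out any individual $P_{bk,ak}$, and there is no a priori MO counterpart of each $P_{bk,ak}$ to which you could ``send the MO generator''. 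So the proposed map on generators is not well-defined, and faithfulness of the module tower cannot manufacture it: equality of one operator per wall is far too little data to determine an algebra homomorphism, let alone to force relations.

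The paper closes this gap in two separate moves that your outline omits. First, independently of the monodromy, it proves a Hopf algebra \emph{embedding} $\mc{B}_{a/b}\hookrightarrow U_q^{MO}(\mf{g}_{a/b})$ by a direct stable-basis / Pieri-rule computation showing $\text{Stab}_{a/b}$ intertwines $\Delta_{a/b}(E_k^{a/b})$ with $E_k^{a/b}$. Only with this embedding in place do both $R_{a/b}^{\pm}$ and $R_{a/b}^{\pm,MO}$ live in a common algebra, so that the ratio $U_{a/b}^{\pm}:=(R_{a/b}^{\pm})^{-1}R_{a/b}^{\pm,MO}$ makes sense. Second, the monodromy equality now reads $\mbf{m}\bigl((1\otimes S_{a/b})(U_{a/b}^{\pm})\bigr)=1$, and a nontrivial algebraic lemma (Lemma~7.7 of \cite{Z24-2}) is invoked to conclude $U_{a/b}^{\pm}=1$, i.e.\ the wall $R$-matrices themselves coincide. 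That is what forces $U_q^{MO}(\mf{g}_{a/b})$ to be no larger than $\mc{B}_{a/b}$, and together with the embedding gives the wall-by-wall isomorphism; the slope factorizations then assemble these into the global statement. Both the Pieri-rule embedding and the $\mbf{m}((1\otimes S)(U))=1\Rightarrow U=1$ step are essential and absent from your proposal.
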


The proof of these theorems follows the logic of the proof in \cite{Z24}\cite{Z24-2}. The only difference is that the degeneration limit for the generators in $U_{q_1,q_2}(\hat{\hat{\mf{gl}}}_1)$ is much more "degenerate", thus we need to use more insights of the shuffle formula for these generators.

There are many applications of the main theorem of the paper. One of the most important applications is in the theory of the enumerative geometry for the instanton moduli space, it has been usually assumed that the the MO quantum affine algebra $U_{q}^{MO}(\hat{\mf{g}}_{Q})$ of Jordan quiver type is isomorphic to the quantum toroidal algebra. See for example \cite{S16}\cite{S21}\cite{AS24}\cite{D22}. The paper gives the affirmative answer that we can use the quantum toroidal algebra to carry out the computation in the enumerative geometry of the instanton moduli space. 

Another important ingredient in the algebraic quantum difference equation is the connection matrix \ref{connection-matrix}. In the general theory of the difference equations, the connection matrix is usually written in terms of the infinite product. Since the regular part of the solution is written as the capping operators, this implies that the connection matrix can be written as the elliptic stable envelopes, and one can use the formula in \cite{D23} to compute the connection matrix explicitly.

\subsection{Outline of the paper}
The paper is organised as follows: In section two we introduce the quantum toroidal algebra $U_{q,t}(\hat{\hat{\mf{gl}}}_1)$, its slope subalgebra and the slope factorisation. We also introduce the wall subalgebra in the slope subalgebra, and the quantum toroidal algebra action on the equivariant $K$-theory of the instanton moduli space. 

In section three we introduce the Maulik-Okounkov quantum affine algebra and its corresponding Maulik-Okounkov wall subalgebra, and we will explain how to get the degeneration limit, or cohomological limit, of the MO wall subalgebra, to the Lie subalgebra of the MO Lie algebra.

In section four and five we introduce the Dubrovin connection, algebraic and geometric quantum difference equation for the instanton moduli space

In section six and seven we show that the degeneration limit of both algebraic and geometric quantum difference equations is the Dubrovin connection. Using this fact we will compute the monodromy representation of the Dubrovin connection in terms of the wall $R$-matrices of the MO wall subalgebra.

In section eight we prove the isomorphism of the MO quantum affine algebra of Jordan type and the quantum toroidal algebra.

\subsection*{Acknowledgements}
The author would like to thank Andrei Negu\c{t}, Andrei Okounkov, Andrey Smirnov, Hunter Dinkins and Nicolai Reshetikhin for their helpful discussions on shuffle algebras, quantum difference equations and qKZ equations. The author is supported by the international collaboration grant BMSTC and ACZSP (Grant no. Z221100002722017). Part of this work was done while the author was visiting Department of Mathematics at Columbia University. The author thanks for their hospitality and provisio of excellent working environment.

\section{\textbf{Quantum toroidal algebra}}
\subsection{$q$-Heisenberg algebra}
The $q$-Heisenberg algebra is defined as:
\begin{align}
U_{q}(\hat{\mf{gl}}_1)=\mbb{Q}(q_1,q_2)\langle P_n,c^{\pm1}\rangle
\end{align}
with $c$ central and generators satisfy the following relation:
\begin{align}
[P_n,P_{n'}]=\delta^{0,n+n'}\frac{n(q_1^{n/2}-q_{1}^{-n/2})(q_{2}^{n/2}-q_{2}^{-n/2})}{(q^{-n/2}-q^{n/2})}(c^n-c^{-n})
\end{align}
It is a bialgebra with resepct to the coproduct determined by:
\begin{align}\label{coproduct-quantum-heisenberg}
&\Delta(c)=c\otimes c\\
&\Delta(P_{n})=
\begin{cases}
P_n\otimes1+c^n\otimes P_n&n>0\\
P_n\otimes c^n+1\otimes P_n&n<0
\end{cases}
\end{align}

There is a bialgebra pairing:
\begin{align}
\langle-,-\rangle:U_{q}^{\geq}(\hat{\mf{gl}}_1)\hat{\otimes}U_{q}^{\leq}(\hat{\mf{gl}}_1)\rightarrow\mbb{Q}(q_1,q_2)
\end{align}
determined by:
\begin{align}
&\langle c,-\rangle=\langle-,c\rangle=\epsilon(-)\\
&\langle P_{n},P_{-n'}\rangle=\delta^{n}_{n'}\frac{n(q^{n/2}_1-q^{-n/2}_1)(q_2^{n/2}-q_{2}^{-n/2})}{q^{n/2}-q^{-n/2}}
\end{align}

If we take $\overline{n}=(n_1\geq\cdots\geq n_t)$ goes over partitions, and consider the products:
\begin{align}
P_{\pm\overline{n}}=P_{\pm n_1}\cdots P_{\pm n_t}
\end{align}
It gives orthogonal bases with respect to the bialgebra pairing such that:
\begin{align}
\langle P_{\overline{n}},P_{-\overline{n}'}\rangle=\delta^{\overline{n}}_{\overline{n}'}z_{\overline{n}}
\end{align}

where:
\begin{align}
z_{\overline{n}}=\overline{n}!\prod_{i=1}^{t}\frac{n_i(q_1^{n_i/2}-q_1^{-n_i/2})(q_{2}^{n_i/2}-q_{2}^{-n_i/2})}{q^{n_i/2}-q^{-n_i/2}}
\end{align}

Now the universal $R$-matrix (modulo the central part) of the $q$-Heisenberg algebra is given by:
\begin{align}
R:=\sum_{\overline{n}\text{ partition}}\frac{P_{\overline{n}}\otimes P_{-\overline{n}}}{z_{\overline{n}}}=\exp[\sum_{n=1}^{\infty}\frac{P_{n}\otimes P_{-n}}{n}\frac{(q^{n/2}-q^{-n/2})}{(q_1^{n/2}-q^{-n/2}_1)(q_2^{n/2}-q^{-n/2}_2)}]
\end{align}

\subsection{Quantum toroidal algebra $U_{q_1,q_2}(\hat{\hat{\mf{gl}}}_1)$ and shuffle algebra realization}

Consider the rational function:
\begin{align}
\zeta(x)=\frac{[xq_1][xq_2]}{[x][xq]}=\frac{(1-xq_1)(1-xq_2)}{(1-x)(1-xq)}
\end{align}
Here $[x]=x^{1/2}-x^{-1/2}$ is the quantum number.

The quantum toroidal algebra $U_{q_1,q_2}(\hat{\hat{\mf{gl}}}_{1})$ is defined as:
\begin{align}
U_{q_1,q_2}(\hat{\hat{\mf{gl}}}_{1})=\mbb{Q}(q_1,q_2)\langle e_k,f_k,h_m,c_1^{\pm1},c_2^{\pm1}\rangle_{k\in\mbb{Z},m\in\mbb{Z}-\{0\}}
\end{align}
with $c_1$ and $c_2$ central and satisfying the following relation:
\begin{align}
&[h_m,h_{m'}]=\frac{\delta^0_{m+m'}m(c_2^m-c^{-m}_2)}{(q_1^{m/2}-q^{-m/2}_1)(q_2^{m/2}-q_{2}^{-m/2})(q^{-m/2}-q^{m/2})}\\
&[h_{m},e_{k}]=e_{k+m}\cdot\begin{cases}1&m>0\\-c_2^m&m<0\end{cases},\qquad [h_m,f_k]=f_{k+m}\cdot\begin{cases}1&m<0\\-c_2^m&m>0\end{cases}\\
&e(z)e(w)\zeta(\frac{z}{w})=e(w)e(z)\zeta(\frac{w}{z}),\qquad f(z)f(w)\zeta(\frac{w}{z})=f(w)f(z)\zeta(\frac{z}{w})\\
&[e_{k},f_{k'}]=\frac{(q_1^{1/2}-q_1^{-1/2})(q_{2}^{1/2}-q_{2}^{-1/2})}{(q^{-1/2}-q^{1/2})}(\psi_{k+k'}c_{1}c_{2}^{-k'}\theta(k+k')-\psi_{k+k'}c_1^{-1}c_2^{-k}\theta(-k-k'))
\end{align}

Here:
\begin{align}
&e(z)=\sum_{k\in\mbb{Z}}\frac{e_k}{z^k},\qquad f(z)=\sum_{k\in\mbb{Z}}\frac{f_k}{z^k}\\
&\psi_{\pm}(z):=\sum_{m=0}^{\infty}\psi_{\pm m}x^m=\psi_{0}^{\pm1}\exp[\sum_{m=1}^{\infty}\frac{h_{\pm m}}{m}x^m(q_1^{m/2}-q^{-m/2}_1)(q_2^{m/2}-q_2^{-m/2})(q^{m/2}-q^{-m/2})]
\end{align}

It has the standard coproduct structure defined as 
\begin{equation}
\begin{aligned}
&\Delta(h_m)=
\begin{cases}
h_m\otimes1+c_2^m\otimes h_m&\text{ if }m>0\\
h_m\otimes c_2^m+1\otimes h_m&\text{ if }m<0
\end{cases}\\
&\Delta(e_k)=e_k\otimes1+\sum_{m=0}^{\infty}c_1c_2^{k-m}\psi_m\otimes e_{k-m}\\
&\Delta(f_k)=1\otimes f_k+\sum_{m=0}^{\infty}f_{k+m}\otimes c_1^{-1}c_2^{k+m}\psi_{-m}\\
&\Delta(c_1)=c_1\otimes c_1,\qquad\Delta(c_2)=c_2\otimes c_2
\end{aligned}
\end{equation}

The quantum toroidal algebra $U_{q_1,q_2}(\ddot{\mf{gl}}_1)$ also has the elliptic Hall algebra realisation \cite{SV13}:
\begin{align}
\mc{A}=\mbb{Q}(q_1,q_2)\langle P_{n,m},c_1^{\pm1},c_2^{\pm1}\rangle_{(n,m)\in\mbb{Z}^2\backslash\{(0,0)\}}
\end{align}
with the following relation
\begin{align}
[P_{n,m},P_{n',m'}]=\delta^0_{n+n'}\frac{d(q_1^{d/2}-q_1^{-d/2})(q_{2}^{d/2}-q_{2}^{-d/2})}{(q^{-d/2}-q^{d/2})}(c_1^nc_2^m-c_1^{-n}c_2^{-m})
\end{align}
if $nm'=n'm$ and $n>0$, with $d=\text{gcd}(m,n)$. And if $nm'>n'm$ and the triangle with vertices $(0,0),(n,m),(n+n',m+m')$ contains no lattice points inside nor on one of the edges, we have the relation:
\begin{equation}
\begin{aligned}
&[P_{n,m},P_{n',m'}]=\frac{(q_1^{d/2}-q_1^{-d/2})(q_2^{d/2}-q_2^{-d/2})}{q^{-1/2}-q^{1/2}}Q_{n+n',m+m'}\cdot\\
&\begin{cases}
c_1^{-n'}c_2^{-m'}&(n,m)\in\mbb{Z}^2_{\pm},(n',m')\in\mbb{Z}^2_{\mp},(n+n',m+m')\in\mbb{Z}^2_{\pm}\\
c_1^nc_2^m&(n,m)\in\mbb{Z}^2_{\pm},(n',m')\in\mbb{Z}^2_{\mp},(n+n',m+m')\in\mbb{Z}^2_{\mp}\\
1&\text{otherwise}
\end{cases}
\end{aligned}
\end{equation}

Here $d=\text{gcd}(n,m)\text{gcd}(n',m')$ and
\begin{align}
\sum_{k=0}^{\infty}Q_{ka,kb}x^k=\exp[\sum_{k=1}^{\infty}\frac{P_{ka,kb}}{k}x^k(q^{k/2}-q^{-k/2})]
\end{align}

The quantum toroidal algebra $U_{q_1,q_2}(\hat{\hat{\mf{gl}}}_1)$ has another realization given by the shuffle algebra.

Consider the following vector space over $\mbb{F}=\mbb{Q}(q_1,q_2)$:
\begin{align}
V=\bigoplus_{n\geq0}\mbb{F}(z_1,\cdots,z_n)^{\text{Sym}}
\end{align}

The vector space $V$ has a shuffle product structure defined as:
\begin{align}
F(z_1,\cdots,z_k)*G(z_1,\cdots,z_l)=\frac{1}{k!l!}\text{Sym}[F(z_1,\cdots,z_k)G(z_{k+1},\cdots,z_{k+l})\prod_{i=1}^{k}\prod_{j=k+1}^{k+l}\zeta(\frac{z_i}{z_j})]
\end{align}
Here $\text{Sym}$ is the symbol for the symmetrisation:
\begin{align}
\text{Sym}(F(z_1,\cdots,z_n))=\sum_{\sigma\in S_n}F(z_{\sigma(1)},\cdots,z_{\sigma(n)})
\end{align}

\begin{defn}
We define the \textbf{shuffle algebra} $\mc{S}^+\subset V$ as the set of rational symmetric functions of the following form:
\begin{align}
F(z_1,\cdots,z_n)=\frac{r(z_1,\cdots,z_n)}{\prod_{1\leq i\neq j\leq n}(z_i-z_jq)}
\end{align}
such that $r(z_1,\cdots,z_n)$ satisfies the wheel conditions:
\begin{align}
r(z_1,\cdots,z_n)|_{\{\frac{z_1}{z_2},\frac{z_2}{z_3},\frac{z_3}{z_1}\}=\{q_1,q_2,\frac{1}{q}\}}=0
\end{align}
\end{defn}

It was observed in \cite{N14} that we have the following isomorphism:
\begin{equation}
\begin{aligned}
&\pi_{+}:U_{q_1,q_2}(\hat{\hat{\mf{gl}}}_1)^{+}\rightarrow\mc{S},\qquad e_{k}\rightarrow z_{1}^k\\
&\pi_{-}:U_{q_1,q_2}(\hat{\hat{\mf{gl}}}_1)^{-}\rightarrow\mc{S}^{op},\qquad f_{k}\rightarrow z_{1}^k
\end{aligned}
\end{equation}

The coproduct structure can be written in terms of the shuffle algebra:
\begin{equation}\label{coproduct-quantum-toroidal-drinfeld}
\begin{aligned}
&\Delta(\psi_{\pm}(w))=\psi_{\pm}(w)\otimes \psi_{\pm}(w)\\
&\Delta(P(z_1,\cdots,z_k))=\sum_{i=0}^{k}\frac{\prod_{b>i}\psi_{\pm}(z_b)\cdot P(z_1,\cdots,z_i\otimes z_{i+1},\cdots,z_k)}{\prod_{a\leq i<b}\zeta(z_b/z_a)}
\end{aligned}
\end{equation}

The RHS means that the expand the fraction in non-negative powers of $z_a/z_b$ for $a\leq i<b$ and thus obtaining an infinite sum of monomials. Then we put all the $h_n$'s to the very left of the expression, then all powers of $z_1,\cdots,z_i$ to the left of the $\otimes$ sign, and finally all powers of $z_{i+1},\cdots,z_{k}$ to the right of the $\otimes$ sign.

The corresponding elliptic Hall algebra generators $P_{\pm k,d}$ is mapped to the following symmetric Laurent rational functions \cite{N14}:

\begin{align}
P_{k,d}=q^{\frac{k-n}{2}}\text{Sym}[\frac{\prod_{i=1}^kz_{i}^{\lfloor\frac{id}{k}\rfloor-\lfloor\frac{(i-1)d}{k}\rfloor}}{\prod_{i=1}^{k-1}(1-\frac{qz_{i+1}}{z_{i}})}\sum_{s=0}^{n-1}q^s\frac{z_{a(n-1)+1}\cdots z_{a(n-s)+1}}{z_{a(n-1)}\cdots z_{a(n-s)}}\prod_{i<j}\zeta(\frac{z_i}{z_j})]
\end{align}

Here $n=\text{gcd}(k,d)$, $a=\frac{k}{n}$.

This set of generators by elliptic Hall algebra admits the quantum toroidal algebra $U_{q,t}(\hat{\hat{\mf{gl}}}_1)$ with the following slope decomposition:
\begin{align}
U_{q,t}(\hat{\hat{\mf{gl}}}_1)=\bigotimes^{\rightarrow}_{a/b\in\mbb{Q}\sqcup\{\infty\}}\mc{B}_{a/b}
\end{align}
Each $\mc{B}_{a/b}=\mbb{Q}(q_1,q_2)\langle P_{kb,ka},c_1,c_2\rangle_{k\in\mbb{Z}}$ is isomorphic to $U_{q}(\hat{\mf{gl}}_1)$, with the coproduct $\Delta_{\frac{a}{b}}$ defined as those \ref{coproduct-quantum-heisenberg} in $U_{q}(\hat{\mf{gl}}_1)$.

This factorization preserves the Hopf pairing for each $\mc{B}_{a/b}$. This result leads to the factorization of the universal $R$-matrix of the quantum toroidal $U_{q,t}(\hat{\hat{\mf{gl}}}_1)$:
\begin{align}
R_{U_{q,t}(\hat{\hat{\mf{gl}}}_1)}=\prod_{\mu\in\mbb{Q}\sqcup\{\infty\}}^{\infty}R_{\mu}=\prod_{a/b\in\mbb{Q}\sqcup\{\infty\}}^{\rightarrow}\exp[\sum_{k=1}^{\infty}\frac{P_{ka,kb}\otimes P_{-ka,-kb}}{k}\cdot\frac{(q^{k/2}-q^{-k/2})}{(q_1^{k/2}-q_{1}^{-k/2})(q_2^{k/2}-q_2^{-k/2})}]
\end{align}
which corresponds to the coproduct $\Delta$ defined in \ref{coproduct-quantum-toroidal-drinfeld}.

One can twist the coproduct $\Delta$ via:
\begin{align}
\Delta_{(\frac{a}{b})}(a)=[\prod_{\mu\in\mbb{Q}_{>0}\sqcup\{\infty\}}^{\rightarrow}R_{\frac{a}{b}+\mu}]\cdot\Delta(a)\cdot[\prod_{\mu\in\mbb{Q}_{>0}\sqcup\{\infty\}}R_{\frac{a}{b}+\mu}]^{-1}
\end{align}

and here the corresponding universal $R$-matrix is given by:
\begin{align}
R_{U_{q,t}(\hat{\hat{\mf{gl}}}_1)}^{\frac{a}{b}}=\prod_{\mu>\frac{a}{b}}^{\leftarrow}R_{\mu}^{-}R_{\infty}\prod_{\mu\leq\frac{a}{b}}^{\rightarrow}R_{\mu}^{+}
\end{align}

Using the proof in the Proposition $2.2$ in \cite{Z23}, one can prove the following:
\begin{prop}
$\Delta_{(\frac{a}{b})}(a)$ coincides with $\Delta_{\frac{a}{b}}(a)$ if $a\in\mc{B}_{\frac{a}{b}}$.
\end{prop}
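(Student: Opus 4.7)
The plan is to follow the strategy of Proposition 2.2 in \cite{Z23}, adapting the coproduct-factorisation argument to the Jordan quiver setting. The essential point is that, on the slope subalgebra $\mc{B}_{\frac{a}{b}}$, the standard Drinfeld coproduct $\Delta$ differs from the Heisenberg coproduct $\Delta_{\frac{a}{b}}$ precisely by contributions whose legs lie in slope subalgebras $\mc{B}_{\mu}$ with $\mu>\frac{a}{b}$ on the left and $\mu<\frac{a}{b}$ on the right, and the twist by the $R$-matrices $R_{\mu}$ for $\mu>\frac{a}{b}$ is designed to cancel exactly those contributions.

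Since both $\Delta_{(\frac{a}{b})}$ and $\Delta_{\frac{a}{b}}$ are algebra homomorphisms, it suffices to check the equality on the generators $P_{kb,ka}$ of $\mc{B}_{\frac{a}{b}}$. I would first use the shuffle coproduct formula \ref{coproduct-quantum-toroidal-drinfeld} together with the Newton-polygon/slope analysis in the shuffle realisation to show
\begin{align*}
\Delta(P_{kb,ka})=\Delta_{\frac{a}{b}}(P_{kb,ka})+(\text{off-slope corrections}),
\end{align*}
where each correction has its left (resp.\ right) tensor factor strictly above (resp.\ below) slope $\frac{a}{b}$. I would then exploit the fact that each $R_{\mu}$, being the universal $R$-matrix of $\mc{B}_{\mu}\cong U_{q}(\hat{\mf{gl}}_1)$ with respect to the Hopf pairing preserved under the slope factorisation, satisfies $R_{\mu}\Delta(x)=\Delta^{\mathrm{op}}(x)R_{\mu}$ for $x\in\mc{B}_{\mu}$. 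Combined with the slope ordering, conjugation by $\prod^{\rightarrow}_{\mu>\frac{a}{b}}R_{\mu}$ systematically removes the off-slope corrections in $\Delta(P_{kb,ka})$ level by level, leaving only $\Delta_{\frac{a}{b}}(P_{kb,ka})$.

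The main obstacle lies in controlling the infinite ordered product and verifying that the cancellations proceed consistently in the required order. The slope ordering is indispensable here: one needs $R_{\mu}$ to act in a controllable way on tensors whose slopes are strictly below $\mu$, so that corrections can be peeled off slope by slope without introducing new ones at lower slopes. The Jordan-quiver case brings no new conceptual difficulties beyond those in \cite{Z23}, since the shuffle formula and the slope decomposition have the same formal structure; the verification essentially amounts to checking that the Newton-polygon argument of loc.\ cit.\ applies verbatim to the explicit shuffle formulas for the generators $P_{kb,ka}$ recorded above.
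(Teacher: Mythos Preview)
Your proposal is correct and takes essentially the same approach as the paper: the paper's entire proof is the single sentence ``Using the proof in the Proposition~2.2 in \cite{Z23}, one can prove the following'', and you have spelled out exactly that strategy. Your elaboration of the Newton-polygon/slope decomposition of $\Delta(P_{kb,ka})$ and the level-by-level cancellation via conjugation by the ordered product of $R_{\mu}$'s is an accurate account of what that cited argument does in the $U_{q_1,q_2}(\hat{\hat{\mf{gl}}}_1)$ setting.
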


For the generators of $\mc{B}_{a/b}$, we can also use the group-like generators $E^{a/b}_{k}$:
\begin{align}
E^{a/b}_{k}=\text{Sym}[\frac{\prod_{i=1}^{kb}z_{i}^{r_{\frac{m}{n}}(i)}\prod_{i=1}^{k-1}(1-\frac{z_{in}}{qt^{i-1}z_{in+1}})}{[k]_{q_1}!(1-\frac{qz_1}{z_2})\cdots(1-\frac{qz_{N-1}}{z_{N}})}\prod_{1\leq i<j\leq N}\zeta(\frac{z_i}{z_j})]
\end{align}

Here
\begin{align}
r_{\frac{m}{n}}(i)=\lfloor\frac{id}{k}\rfloor-\lfloor\frac{(i-1)d}{k}\rfloor
\end{align}

The relation between $E^{a/b}_{k}$ and $P^{a/b}_{k}$ is similar to the relation between the elementary symmetric polynomials $e_k$ and the $k$-th power sum $p_k$ as explained in \cite{N16}. Moreover, $E^{a/b}_{k}$ has the coproduct formula given as:
\begin{align}\label{coproduct-group-elements}
\Delta_{a/b}(E^{a/b}_{k})=\sum_{0\leq l\leq k}E^{a/b}_{l}\otimes c^{bl}E^{a/b}_{k-l}
\end{align}

\subsection{Fock space representation}

We now give the action of the quantum toroidal algebra $U_{q,t}(\hat{\hat{\mf{gl}}}_1)$ on $K_{T}(M(1))$, i.e. the Fock space.

We define the space of symmetric polynomials over the coefficient field $\mbb{F}=\mbb{Q}(q_1,q_2)$ as:
\begin{align}
\text{Fock}_{q_1,q_2}=\mbb{F}[x_1,x_2,\cdots]^{\text{Sym}}=\mbb{F}[p_{-1},p_{-2},\cdots]
\end{align}
Here $p_{-k}=\sum_{i}x_{i}^k$ are the elemenetary symmetric polynomials. It is known that $\text{Fock}_{q_1,q_2}$ has a natural $\mbb{Z}$-basis given by partitions $\lambda=(\lambda_1\geq\lambda_2\geq\cdots\geq\lambda_{l+1})$ such that $p_{\lambda}=p_{1}^{k_1}p_{2}^{k_2}\cdots p_{l}^{k_{l}}$, $k_{i}=\lambda_{i+1}-\lambda_{i}$.

Over the coefficient field $\mbb{F}$, $\text{Fock}_{q_1,q_2}$ also exhibits another set of basis, which is called the Macdonald polynomial $P_{\lambda}$ with $\lambda$ the corresponding partition.

Given a certain partition $\lambda=(\lambda_1\geq\lambda_2\geq\cdots\geq\lambda_l)$, we can draw the corresponding Young diagram as a bunch of boxes with $\lambda_i$ boxes at $i$-th row. And for each box $\square\in\lambda$, we can associate a weight $\chi_{\square}\in\mbb{F}$ defined as:
\begin{align}
\chi_{\square}=uq_1^xq_2^y
\end{align}
Here $(x,y)$ is the coordinate of the bottom left corner of the box $\square$.

Given two Young diagrams $\mu,\lambda$, we shall write $\mu\subset\lambda$ if $\mu$ is contained in $\lambda$. Let $R$ be a symmetric rational function in $|\lambda\backslash\mu|$ variables, we denote:
\begin{align}
R(\lambda\backslash\mu)=R(\cdots,\chi_{\square},\cdots)_{\square\in\lambda\backslash\mu}
\end{align}
as the function $R$ evaluated at $\{\chi_{\square}\}_{\square\in\lambda\backslash\mu}$.

We denote $M(1,n):=\text{Hilb}_{n}(\mbb{C}^2)$ as the Hilbert scheme of $n$ points over $\mbb{C}^2$, it has a natural action $T:=(\mbb{C}^{\times})^2$ induced by the action on $\mbb{C}^2$. The fixed point set $\text{Hilb}_{n}(\mbb{C}^2)^{(\mbb{C}^{\times})^2}$ corresponds to the set of partitions $\lambda$ such that $|\lambda|=n$. In this case, simplifying the notation $M(1):=\sqcup_{n}M(1,n)$, the localized equivariant $K$-theory $F(u):=K_{T}(M(1))_{loc}$ has the fixed point basis denoted by $|\lambda\rangle$ with $\lambda$ a partition.

As the $U_{q,t}(\hat{\hat{\mf{gl}}}_1)$-module, there is an isomorphism between $F(u)$ and $\text{Fock}_{q_1,q_2}$ given by:
\begin{align}
|\lambda\rangle\mapsto P_{\lambda}
\end{align}
such that the fixed point basis corresponds to the Macdonald polynomial.

Now we use the shuffle algebra realization of $U_{q,t}(\hat{\hat{\mf{gl}}}_1)$ to give an explicit formula for the quantum toroidal action over $F(u)$. For the details see \cite{N14}\cite{FT11}.

Let $X\in\mc{S}^{+}$ such that $\pi^{+}(X)=R$ and $R$ is some symmetric rational function. Then we have that
\begin{align}
\langle\lambda|X|\mu\rangle=R(\lambda\backslash\mu)(\frac{(1-q_1)(1-q_2)}{1-q})^{|\lambda\backslash\mu|}\prod_{\blacksquare\in\lambda\backslash\mu}\frac{\prod_{\square\text{ o.c. of }\lambda}[\frac{\chi_{\square}}{\chi_{\blacksquare}}]}{\prod_{\square\text{ i.c. of }\lambda}[\frac{\chi_{\square}}{\chi_{\blacksquare}}]}
\end{align}

Similarly for $Y\in\mc{S}^{-}$ such that $\pi^{-}(Y)=R$, we have:
\begin{align}
\langle\mu|Y|\lambda\rangle=R(\lambda\backslash\mu)(\frac{(1-q_1)(1-q_2)}{1-q}q^{1/2})^{-|\lambda\backslash\mu|}\prod_{\blacksquare\in\lambda\backslash\mu}\frac{\prod_{\square\text{ i.c. of }\lambda}[\frac{\chi_{\square}}{q\chi_{\blacksquare}}]}{\prod_{\square\text{ o.c. of }\lambda}[\frac{\chi_{\square}}{q\chi_{\blacksquare}}]}
\end{align}

For the Cartan generators $h_{\pm d}$:
\begin{align}
\langle\mu|h_{\pm d}|\lambda\rangle=\delta^{\mu}_{\lambda}u^{\pm d}\text{sign}(d)(\frac{1}{(1-q_1^{\pm r})(1-q_2^{\pm r})}-\sum_{\square=(i,j)\in\lambda}q_1^{\pm(i-1)k}q_2^{\pm(j-1)k})
\end{align}

The central elements $c_1$ and $c_2$ are sent to the scaling operators:
\begin{align}
c_1\mapsto q,\qquad c_2\mapsto1
\end{align}

The quantum toroidal algebra $U_{q,t}(\hat{\hat{\mf{gl}}}_1)$ contains a $q$-Heisenberg subalgebra $\mc{B}_{0}=\mbb{F}\langle P_{k,0}\rangle_{k\in\mbb{Z}}$, and when $\mc{B}_{0}$ acts on $F_u=\mbb{F}[p_{-1},p_{-2},\cdots]$, the operators $P_{n,0}$ can be written as:
\begin{align}
p_{-n}=P_{-n,0},\qquad p_n=-n(1-q_1^n)(1-q_2^n)\frac{\partial}{\partial p_{-n}}=P_{n,0}
\end{align}

And the fixed point basis $|\lambda\rangle$ corresponds to the Macdonald polynomial $P_{\lambda}$.

If we do the degeneration over the variables $q=e^{\hbar(t_1+t_2)}, t=e^{\hbar(t_1-t_2)}$ such that $\hbar\rightarrow0$. The MacDonald polynomial degenerate to the Jack polynomial $J_{\lambda}(t_1,t_2)$ over the field $\mbb{Q}(t_1,t_2)$. In this case take the scaling:
\begin{align}
\alpha_{kb,ka}=\frac{(q_1-1)^{kb}(1-q_2)^{kb}}{(q_1^{k}-1)(1-q_{2}^k)}P_{kb,ka}
\end{align}

such that:
\begin{align}
[P_{-ak',-bk'},P_{ak,bk}]=\delta_{k+k'}^0k(1-q_1^k)(1-q_2^k)
\end{align}

\begin{align}
[P_{-ak',0},P_{ak,0}]=\delta_{k+k'}^0ka(1-q_1^{ka})(1-q_2^{ka})
\end{align}

\subsection{Tensor products of Fock representations and instanton moduli space}
In this subsection we review the action of quantum toroidal algebra over $Fock_{q_1,q_2}^{\otimes r}$ and $\oplus_{n}K_{T}(M(n,r))$, i.e. the equivariant $K$-theory of the instanton moduli space $M(n,r)$.

Instanton moduli space $M(n,r)$ is the moduli space of the following data: We consider the projective plane $\mbb{P}^2$ and fix a line $\mc{l}_{\infty}\in\mbb{P}^2$, and $M(n,r)$ is the moduli space of rank $r$ torsion free sheaves $\mc{F}$ on $\mbb{P}^2$ such that it is trivialised over $\mc{l}_{\infty}$, i.e.
\begin{align}
\mc{F}|_{\mc{l}_{\infty}}\cong\mc{O}_{\mc{l}_{\infty}}^{\oplus r}
\end{align}
such that $c_2(\mc{F})=n$. 

There is also another description for the instanton moduli space $M(n,r)$ via the quiver varieties. Consider the quiver representations:
\begin{align}
T^*\text{Rep}(n,r)=\text{Hom}(\mbb{C}^{n},\mbb{C}^n)^{\oplus 2}\oplus\text{Hom}(\mbb{C}^n,\mbb{C}^r)\oplus\text{Hom}(\mbb{C}^r,\mbb{C}^n)
\end{align}
It has a natural $GL_n$ action given by:
\begin{align}
g\cdot(X,Y,I,J)=(gXg^{-1},gYg^{-1},gI,Jg^{-1})
\end{align}
which is Hamiltonian with respect to the standard symplectic structure on $T^*\text{Rep}(n,r)$. It gives the moment map:
\begin{align}
\mu:T^*\text{Rep}(n,r)\rightarrow\mf{gl}_n^*,\qquad\mu(X,Y,I,J)=[X,Y]+JI
\end{align}

Now choose a stability condition $\chi:GL_n\rightarrow\mbb{C}^*$, $\chi(g)=\text{det}(g)^{\theta}$, we define the GIT quotient of the quiver representation by:
\begin{align}
M_{\theta}(n,r):=\mu^{-1}(0)//_{\theta}GL_n,\qquad\theta\in\mbb{Z}_{+}
\end{align}

The following isomorphism of symplectic varieties was constructed by Nakajima \cite{N99}:
\begin{align}
M_{\theta}(n,r)\cong M(n,r)
\end{align}

On $M_{\theta}(n,r)$, we have natural torus action $T=(\mbb{C}^*)^2\times(\mbb{C}^*)^r$, the second component $(\mbb{C}^*)^r\subset GL_r$ is the maximal torus of the framed $GL_r$ acting as:
\begin{align}
(q,t,g)\cdot(X,Y,I,J)=(q_1X,q_2Y,\sqrt{q_1q_2}Ig,\sqrt{q_1q_2}g^{-1}J)
\end{align}
Geometrically speaking, on the $M(n,r)$ side, $(\mbb{C}^*)^2$ acts as rescaling the coordinates on $\mbb{P}^2$ preserving $\mc{l}_{\infty}$, and the torus $(\mbb{C}^*)^r$ acts as left multiplication on $\mc{F}|_{\mc{l}_{\infty}}\cong\mc{O}_{\mc{l}_{\infty}}^{\oplus r}$.

We consider the equivariant K-theory $K_{T}(M(r))=\oplus_{n}K_{T}(M(n,r))$ of the instanton moduli space $M(n,r)$. It is a module over $K_{T}(pt)=\mbb{Z}[q_1^{\pm1},q_2^{\pm1},u_i^{\pm1}]_{i=1,\cdots,r}$. In this paper we consider the localized equivariant $K$-theory $K_{T}(M(r))_{loc}$ of the instanton moduli space:
\begin{align}
K_{T}(M(r))_{loc}:=K_{T}(M(r))\otimes_{K_{T}(pt)}\text{Frac}(K_T(pt))
\end{align}
where $\text{Frac}(K_{T}(pt)):=\mbb{Q}(q_1,q_2,u_i)_{i=1,\cdots,r}$. 

Via the localisation theorem, the localised K-theory $K_{T}(M(n,r))_{loc}$ is isomorphic to the localised K-theory $K_{T}(M(n,r)^T)_{loc}$ of the $T$-fixed points. A well-known fact is that the $T$-fixed points of $M(n,r)$ corresponds to the r-partitions
\begin{align}
\bm{\lambda}=(\lambda^1,\cdots,\lambda^r)
\end{align}
such that $|\bm{\lambda}|=\sum_{i=1}^r|\lambda_i|=n$.

For each box $\square$ in the $r$-partition $\bm{\lambda}$ we define the weight function:
\begin{align}
\chi_{\square}=u_kq_1^iq_2^j
\end{align}
Here $u_k$ means that the box $\square$ lives in the partition $\lambda^k$. $(i,j)$ are the coordinates of the box $\square$.

The instanton moduli space $M(n,r)$ has a tautological bundle $V$. Restricting $V$ to the fixed point $\bm{\lambda}$, we have that:
\begin{align}\label{character-formula-tauto}
V|_{\bm{\lambda}}=\sum_{\square\in\bm{\lambda}}\chi_{\square}
\end{align}

The action of $U_{q_1,q_2}(\hat{\hat{\mf{gl}}}_1)$ on $K_{T}(M(r))$ can be written as:
\begin{align}\label{fixed-point-formula}
&\langle\bm{\mu}|\psi_{\pm}(z)|\bm{\lambda}\rangle=\delta^{\bm{\mu}}_{\bm{\lambda}}\prod_{\square\in\bm{\lambda}}\frac{\zeta(\frac{z}{\chi_{\square}})}{\zeta(\frac{\chi_{\square}}{z})}\prod_{i=1}^r\frac{[\frac{u_i}{qz}]}{[\frac{z}{qu_i}]},\qquad \langle\bm{\mu}|\psi_{0}^{\pm}|\bm{\lambda}\rangle=\delta^{\bm{\mu}}_{\bm{\lambda}}q^{\pm|\bm{\lambda}|r}\\
&\langle\bm{\lambda}|X|\bm{\mu}\rangle=R(\bm{\lambda}\backslash\bm{\mu})\prod_{\blacksquare\in\bm{\lambda}\backslash\bm{\mu}}[\frac{(1-q_1)(1-q_2)}{1-q}\zeta(\frac{\chi_{\blacksquare}}{\chi_{\bm{\mu}}})\tau(q\chi_{\blacksquare})],\qquad\pi^+(X)=R\\
&\langle\bm{\mu}|Y|\bm{\lambda}\rangle=R(\bm{\lambda}\backslash\bm{\mu})\prod_{\blacksquare\in\bm{\lambda}\backslash\bm{\mu}}[(\frac{q^{r/2}(1-q_1)(1-q_2)}{(1-q)})^{-1}\zeta(\frac{\chi_{\blacksquare}}{\chi_{\bm{\mu}}})^{-1}\tau(\chi_{\blacksquare})],\qquad\pi^{-}(Y)=R
\end{align}

Here $\zeta(\frac{\chi_{\blacksquare}}{\chi_{\bm{\mu}}})=\prod_{\square\in\bm{\mu}}\zeta(\frac{\chi_{\blacksquare}}{\chi_{\square}})$, $\tau(z)=\prod_{i=1}^r[\frac{z}{u_i}]$.

As the vector space, $K_{T}(M(r))_{loc}$ is isomorphic to $K_{T}(M(1))_{loc}^{\otimes r}$. The relation between the action of $U_{q_1,q_2}(\hat{\hat{\mf{gl}}}_1)$ on $K_{T}(M(r))_{loc}$ and $K_{T}(M(1))_{loc}^{\otimes r}$ can be explained as follows.

For the maximal torus $A=(\mbb{C}^{*})^r\subset GL_r$ of the framed group. The corresponding fixed point is isomorphic to the smaller quiver varieties, i.e.
\begin{align}
M(n,r)^{A}=\sqcup_{n_1+\cdots+n_r=n}M(n_1,1)\times\cdots\times M(n_r,1)
\end{align}

We consider the slope $\infty$ stable basis map such that:
\begin{align}
\text{Stab}_{\infty}^{\pm}:K(1)^{\otimes r}\rightarrow K(r),\qquad\text{Stab}^{\pm}_{\infty}(\alpha)=(i^*)^{-1}(\alpha\cdot[N^{\mp}])
\end{align}

Here $N^{\pm}$ is the $K$-theory class of the positive and negative degree part of the normal bundle $N_{M(n,r)^{A}|M(n,r)}$ with respect to the torus $A$ group action.

We can show that the stable envelope of slope $\infty$ intertwines the quantum toroidal algebra $U_{q_1,q_2}(\hat{\hat{\mf{gl}}}_1)$ action on $K(1)^{\otimes r}$ and $K(r)$ using the proof of the Proposition III.5 in \cite{N15} :

\begin{prop}\label{intertwine-infty-coproduct}
Given arbitrary $F\in U_{q_1,q_2}(\hat{\hat{\mf{gl}}}_1)$, we have the following commutative diagram:
\begin{equation}
\begin{tikzcd}
K(M(r_1))\otimes K(M(r_2))\arrow[r,"\text{Stab}_{\infty}^{\pm}"]\arrow[d,"\Delta_{(op)}(F)"]&K(M(r_1+r_2))\arrow[d,"F"]\\
K(M(r_1))\otimes K(M(r_2))\arrow[r,"\text{Stab}_{\infty}^{\pm}"]&K(M(r_1+r_2))
\end{tikzcd}
\end{equation}
\end{prop}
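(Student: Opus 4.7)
The plan is to follow the strategy of Proposition III.5 in \cite{N15}, adapted to the instanton setting with framing torus $A$. First, I would reduce the verification to a set of algebra generators: since $U_{q_1,q_2}(\hat{\hat{\mf{gl}}}_1)$ is generated by the shuffle elements $X\in\mc{S}^+$, $Y\in\mc{S}^-$, the Cartan operators $h_{\pm m}$, and the central elements $c_1, c_2$, and since both $F\mapsto F\circ\text{Stab}_{\infty}^{\pm}$ and $F\mapsto \text{Stab}_{\infty}^{\pm}\circ\Delta_{(op)}(F)$ are multiplicative in $F$ (the latter because $\Delta_{(op)}$ is an algebra homomorphism), it suffices to check commutativity of the square on each generator separately.

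Second, I would move to the fixed-point basis. After refining by the framing torus $A$, the $A$-fixed locus of $M(n,r_1+r_2)$ decomposes as $\sqcup_{n_1+n_2=n}M(n_1,r_1)\times M(n_2,r_2)$, and the slope-$\infty$ stable envelope is defined explicitly by the Euler-class correction $\text{Stab}_{\infty}^{\pm}(\alpha)=(i^*)^{-1}(\alpha\cdot[N^{\mp}])$. Matrix elements of the two composed maps in fixed-point bases $\{|\bm{\lambda}\rangle\}$ and $\{|\bm{\mu}_1\rangle\otimes|\bm{\mu}_2\rangle\}$ can then be written out using the explicit formulas \eqref{fixed-point-formula}. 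For $X\in\mc{S}^+$ with $\pi^+(X)=R$, the shuffle coproduct \eqref{coproduct-quantum-toroidal-drinfeld} expresses $\Delta(X)$ as a sum indexed by splittings of the box set $\bm{\lambda}\setminus\bm{\mu}$ into a piece sitting in the $r_1$-partition and a piece sitting in the $r_2$-partition, each contribution being a product of the local matrix elements of $X$ on each factor together with an explicit product of $\psi_{\pm}$-factors and cross $\zeta$-factors. The case $Y\in\mc{S}^-$ is handled symmetrically, and the Cartan and central parts are diagonal on fixed-point bases and reduce to a direct comparison via the formula for $\psi_{\pm}(z)$ in \eqref{fixed-point-formula}.

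Third, the key combinatorial identification is that the weights of the part of the normal bundle $N_{M(r_1)\times M(r_2)/M(r_1+r_2)}$ of negative (resp.\ positive) $A$-weight are exactly the ratios $\chi_{\square}/\chi_{\square'}$ for boxes $\square, \square'$ in opposite factors. Consequently, dividing by $[N^{\mp}]$ to construct $\text{Stab}_{\infty}^{\pm}$ produces precisely the cross $\zeta$-factors appearing between the two $\otimes$-tensorands in the shuffle coproduct formula, while the $\psi_{\pm}$-factors in the coproduct reproduce the contributions of the existing boxes of the opposite partition to the matrix element on the big side. The opposite coproduct appears because the slope-$\infty$ stable envelope corresponds to the reversed ordering $(\bm{\mu}_1,\bm{\mu}_2)\mapsto \bm{\mu}_2\sqcup\bm{\mu}_1$ of the two factors relative to the fractal/slope decomposition. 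The main obstacle is this careful bookkeeping between the shuffle-theoretic $\psi_{\pm}$/$\zeta$ combinatorics and the $T$-character of the normal bundle, but once done for a generating set the general case follows formally from the Hopf-algebra compatibility of $\Delta_{(op)}$.
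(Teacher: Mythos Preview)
Your proposal is correct and follows exactly the approach the paper indicates: the paper does not give an independent proof but simply invokes the proof of Proposition~III.5 in \cite{N15}, which is precisely the generator-by-generator fixed-point computation you outline (reduce to shuffle/Cartan generators, compute matrix coefficients in the fixed-point basis via \eqref{fixed-point-formula}, and match the cross $\zeta$- and $\psi_{\pm}$-factors in the coproduct \eqref{coproduct-quantum-toroidal-drinfeld} against the $A$-weights of $[N^{\mp}]$). Your sketch is in fact more detailed than what the paper itself supplies.
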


\section{\textbf{Brief review of the Maulik-Okounkov quantum affine algebra}}
\subsection{Maulik-Okounkov quantum affine algebras}
Here we review the construction of the Maulik-Okounkov quantum affine algebra of the Jordan type.

Given $M(\mbf{v},\mbf{w})$ a Nakajima quiver variety and $G_{\mbf{v}}\times A_{E}$ acting on $M(\mbf{v},\mbf{w})$. Given a subtorus $T\subset G_{\mbf{v}}\times A_{E}$ in the kernel of $q$. By definition, the $K$-theoretic stable envelope is a $K$-theory class
\begin{align}
\text{Stab}_{\mc{C}}^{s}\subset K_{G}(X\times X^T)
\end{align}

such that it induces the morphism
\begin{align}
\text{Stab}_{\mc{C},s}:K_{G}(X^T)\rightarrow K_{G}(X)
\end{align}

such that if we write $X^T=\sqcup_{\alpha}F_{\alpha}$ into components:
\begin{itemize}
	\item The diagonal term is given by the structure sheaf of the attractive space:
	\begin{align}
	\text{Stab}_{\mc{C},s}|_{F_{\alpha}\times F_{\alpha}}=(-1)^{\text{rk }T_{>0}^{1/2}}(\frac{\text{det}(\mc{N}_{-})}{\text{det}T_{\neq0}^{1/2}})^{1/2}\otimes\mc{O}_{\text{Attr}}|_{F_{\alpha}\times F_{\alpha}}
	\end{align}

	\item The $T$-degree of the stable envelope has the bounding condition for $F_{\beta}\leq F_{\alpha}$:
	\begin{align}
	\text{deg}_{T}\text{Stab}_{\mc{C},s}|_{F_{\beta}\times F_{\alpha}}+\text{deg}_{T}s|_{F_{\alpha}}\subset\text{deg}_{T}\text{Stab}_{\mc{C},s}|_{F_{\beta}\times F_{\beta}}+\text{deg}_{T}s|_{F_{\beta}}
	\end{align}

	We require that for $F_{\beta}<F_{\alpha}$, the inclusion $\subset$ is strict.
\end{itemize}

The uniqueness and existence of the $K$-theoretic stable envelope was given in \cite{AO21} and \cite{O21}. In \cite{AO21}, the consturction is given by the abelinization of the quiver varieties. In \cite{O21}, the construction is given by the stratification of the complement of the attracting set, which is much more general.

The stable envelope has the factorisation property called the triangle lemma. Given a subtorus $T'\subset T$ with the corresponding chamber $\mc{C}_{T'},\mc{C}_{T}$, we have the following diagram commute:
\begin{equation}\label{triangle-lemma}
\begin{tikzcd}
K_{G}(X^T)\arrow[rr,"\text{Stab}_{\mc{C}_T,s}"]\arrow[dr,"\text{Stab}_{\mc{C}_{T/T'},s}"]&&K_{G}(X)\\
&K_{G}(X^{T'})\arrow[ur,"\text{Stab}_{\mc{C}_{T'},s}"]&
\end{tikzcd}
\end{equation}

Here we only focus on the Nakajima quiver varieties of the Jordan type $X=M(n,r)$. In this case, the space of slope parametres $\text{Pic}(X)\otimes\mbb{Q}\cong\mbb{Q}$ is a one-dimensional vector space. We choose the framing torus $\sigma:\mbb{C}^*\rightarrow A_{r}$ such that:
\begin{align}
r=a_1r_1+\cdots+a_kr_k
\end{align}
In this case the fixed point is given by:
\begin{align}
M(n,r)^{\sigma}=\bigsqcup_{n_1+\cdots+n_k=n}M(n_1,r_1)\times\cdots\times M(n_k,r_k)
\end{align}

Denote $K(r):=\oplus_{n}K_{T}(M(n,r))$, now the stable envelope is a map:
\begin{align}
\text{Stab}_{\sigma,s}:K(r_1)\otimes\cdots\otimes K(r_k)\rightarrow K(r_1+\cdots+r_k)
\end{align}
Using this we can define the geometric $R$-matrix:
\begin{align}
\mc{R}^s:=\text{Stab}^{-1}_{-\sigma,s}\circ\text{Stab}_{\sigma,s}: K(r_1)\otimes\cdots\otimes K(r_k)\rightarrow K(r_1)\otimes\cdots\otimes K(r_k)
\end{align}
The geometric $R$-matrix can be further factorised into smaller parts:
\begin{align}
\mc{R}^s:=\prod_{1\leq i<j\leq k}\mc{R}^{s}_{ij}(\frac{a_i}{a_j}),\qquad\mc{R}^{s}_{ij}(\frac{a_i}{a_j}):K(r_1)\otimes K(r_2)\rightarrow K(r_1)\otimes K(r_2)
\end{align}
Each $\mc{R}^{s}_{ij}(\frac{a_i}{a_j})$ satisfies the trigonometric Yang-Baxter equation with spectral parametres.

\begin{defn}
The Maulik-Okounkov quantum affine algebra $U_{q}^{MO}(\hat{\mf{g}}_{Q})$ is the subalgebra of $\prod_{r}\text{End}(K(r))$ generated by the matrix coefficients of $\mc{R}^{s}_{\mc{C}}$.
\end{defn}

Given an auxillary space $V_0=\bigotimes_{r}K(r)$, $V=\bigotimes_{r'}K(r')$ and a finite rank operator
\begin{align}
m(a_0)\in\text{End}(V_0)(a_0)
\end{align}
Now the element of $U_{q}^{MO}(\hat{\mf{g}}_{Q})$ is generated by the following operators:
\begin{align}
\oint_{a_0=0,\infty}\frac{da_0}{2\pi ia_0}\text{Tr}_{V_0}((1\otimes m(a_0))\mc{R}^{s}_{V,V_0}(\frac{a}{a_0}))\in\text{End}(V(a))
\end{align}

The coproduct structure, antipode map and the counit map can be defined as follows:
The coproduct $\Delta_{s}$ on $U_{q}^{MO}(\hat{\mf{g}}_{Q})$ is defined via the conjugation by $\text{Stab}_{\mc{C},s}$, i.e. for $a\in U_{q}^{MO}(\hat{\mf{g}}_{Q})$ as $a:K(r)\rightarrow K(r)$, $\Delta_{s}(a)$ is defined as:
\begin{equation}\label{coproduct-geometric}
\begin{tikzcd}
K(r_1)\otimes K(r_2)\arrow[r,"\text{Stab}_{\mc{C},s}"]&K(r_1+r_2)\arrow[r,"a"]&K(r_1+r_2)\arrow[r,"\text{Stab}_{\mc{C},s}^{-1}"]&K(r_1)\otimes K(r_2)
\end{tikzcd}
\end{equation}

For the antipode map $S_{s}$, note that we have the isomorphism of the graded vector space $V_i\cong V_i^*$, and this isomorphism induces the isomorphism:
\begin{align}
\prod_{i}\text{End}(V_i)\cong\prod_{i}\text{End}(V_i^*)
\end{align}

The antipode map $S_{s}:U_{q}^{MO}(\hat{\mf{g}}_{Q})\rightarrow U_{q}^{MO}(\hat{\mf{g}}_{Q})$ is given by:
\begin{align}
\oint_{a_0=0,\infty}\frac{da_0}{2\pi ia_0}\text{Tr}_{V_0}((1\otimes m(a_0))\mc{R}^{s}_{V,V_0}(\frac{a}{a_0}))\mapsto\oint_{a_0=0,\infty}\frac{da_0}{2\pi ia_0}\text{Tr}_{V_0}((1\otimes m(a_0))\mc{R}^{s}_{V^*,V_0}(\frac{a}{a_0}))
\end{align}

The projection of the module $V$ to the trivial module $\mbb{C}$ induce the counit map:
\begin{align}
\epsilon:U_{q}^{MO}(\hat{\mf{g}}_{Q})\rightarrow\mbb{C}
\end{align}

Since $M(0,n)$ is just a point, we denote the vector in $K_{\mbf{0},\mbf{w}}$ as $v_{\varnothing}$, and we call it the \textbf{vacuum vector}.  We define the evaluation map:
\begin{align}\label{evaluation-module}
\text{ev}:U_{q}^{MO}(\hat{\mf{g}}_{Q})\rightarrow\prod_{r}K(r),\qquad F\mapsto Fv_{\varnothing}
\end{align}

\begin{defn}
We define the \textbf{positive half of the Maulik-Okounkov quantum affine algebra} $U_{q}^{MO,+}(\hat{\mf{g}}_{Q})$ as the quotient by the kernel of the evaluation map:
\begin{align} 
U_{q}^{MO,+}(\hat{\mf{g}}_{Q}):=U_{q}^{MO}(\hat{\mf{g}}_{Q})/\text{Ker}(ev)
\end{align}
\end{defn}

Fix the stable envelope $\text{Stab}_{\sigma,m}$ and $\text{Stab}_{\sigma,\infty}$, we can have the following factorisation of $\text{Stab}_{\pm,m}$ near $u=0,\infty$:
\begin{equation}
\begin{aligned}
\text{Stab}_{\sigma,m}=&\text{Stab}_{\sigma,-\infty}\cdots\text{Stab}_{\sigma,m_{-2}}\text{Stab}_{\sigma,m_{-2}}^{-1}\text{Stab}_{\sigma,m_{-1}}\text{Stab}_{\sigma,m_{-1}}^{-1}\text{Stab}_{\sigma,m}\\
=&\text{Stab}_{\sigma,-\infty}\cdots R_{m_{-2},m_{-1}}^+R_{m_{-1},m}^+
\end{aligned}
\end{equation}

\begin{equation}
\begin{aligned}
\text{Stab}_{-\sigma,m}=&\text{Stab}_{-\sigma,\infty}\cdots\text{Stab}_{-\sigma,m_2}\text{Stab}_{-\sigma,m_2}^{-1}\text{Stab}_{-\sigma,m_1}\text{Stab}_{-\sigma,m_1}^{-1}\text{Stab}_{-\sigma,m}\\
=&\text{Stab}_{-\sigma,\infty}\cdots R_{m_2,m_1}^{-}R_{m_1,m}^{-}
\end{aligned}
\end{equation}

Here $R_{m_1,m_2}^{\pm}=\text{Stab}_{\pm\sigma,m_1}^{-1}\text{Stab}_{\pm\sigma,m_2}$ is the wall $R$-matrix. Since $m_1,m_2\in\mbb{Q}$ are rational numbers, this implies that the wall in $\mbb{Q}$ are points $w$ in $\mbb{Q}$. If $m_1$ and $m_2$ are separated by a point $w$, we denote the wall $R$-matrix by $R_{w}^{\pm}$. This gives the factorisation of the geometric $R$-matrix:
\begin{align}\label{factorisation-geometry}
\mc{R}^{s}(u)=\prod_{i<0}^{\leftarrow}R_{w_i}^{-}R_{\infty}\prod_{i\geq0}^{\leftarrow}R_{w_i}^{+}
\end{align}

For the wall set of $K_{T}(M(n,r))$, it has been calculated in \cite{S20} and \cite{D23} that it is isomorphic to:
\begin{align}
\text{Walls}(M(n,r))\cap[0,1)=\{\frac{a}{b}|0\leq a<b\leq n\}
\end{align}

Following the definition in \cite{OS22} and \cite{Z24-2}, we can define the wall subalgebra $U_{q}^{MO}(\mf{g}_{w})$ generated by $q^{\Omega}R_{w}^{\pm}$, and the positive/negative half $U_{q}^{MO,\pm}(\mf{g}_{w})$ similar to the definition in section three of \cite{Z24-2}.

We can define the graded pieces of $U_{q}^{MO}(\mf{g}_{w})$ by:
\begin{align}
U_{q}^{MO,\pm}(\mf{g}_{w})=\bigoplus_{n\in\mbb{N}}U_{q}^{MO,\pm}(\mf{g}_{w})_{\pm n}
\end{align}
with $a\in U_{q}^{MO,\pm}(\mf{g}_{w})_{\pm n}$ such that $a:K(l,r)\rightarrow K(l\pm n,r)$.

Similar proof as the Lemma $3.4$ in \cite{Z24-2} shows that each graded pieces is finite-dimensional.

It has been proved in \cite{Z24-2} that $U_{q}^{MO,+}(\hat{\mf{g}}_{Q})$ is generated by the positive half of the wall subalgebras $U_{q}^{MO,+}(\mf{g}_{w})$ for arbitrary walls $w$.

\subsection{Degeneration limit to Maulik-Okounkov Yangian}
We mainly concern the connection between the geometric $R$-matrix in both $K$-theory and cohomology theory. We denote the Chern character map $ch:K_{T}(M(\mbf{v},\mbf{w}))\rightarrow H_{T}(M(\mbf{v},\mbf{w}))$ as sending $u$ to $e^{\kappa z}$, and $q$ to $e^{\kappa\hbar}$.

\begin{prop}
If we take $u=e^{\kappa z}, q=e^{\kappa\hbar}$ and take $\kappa\rightarrow0$, we obtain that:
\begin{align}
\lim_{\kappa\rightarrow0}\mc{R}^s(e^{\kappa z})=\mc{R}(z)
\end{align}
which is the geometric $R$-matrix for the corresponding Maulik-Okounkov Yangian $Y_{\hbar}(\mf{g}_{Q})$.
\end{prop}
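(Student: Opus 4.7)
The plan is to reduce the statement to the analogous degeneration statement at the level of stable envelopes, and then to use the defining characterisations of stable envelopes in $K$-theory and cohomology together with uniqueness.

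First I would recall that the $K$-theoretic stable envelope $\text{Stab}_{\mathcal{C},s}$ is characterised by the three conditions stated in the excerpt: support on the attracting set, diagonal restriction equal to the prescribed twist of $\mathcal{O}_{\text{Attr}}$, and the slope/degree bound off the diagonal. In equivariant cohomology, the Maulik-Okounkov stable envelope $\text{Stab}_{\mathcal{C}}$ is characterised by the completely parallel conditions (support on attracting set, diagonal equal to the Euler class $e(\mathcal{N}_{-})$ of the negative normal bundle, and a Newton polytope/chamber condition for off-diagonal weights). The Chern character $ch$ sends $u \mapsto e^{\kappa z}$, $q\mapsto e^{\kappa\hbar}$, and I would pass to the fixed-point basis of $M(n,r)^T$, where the stable envelope becomes a matrix with entries in $K_T(\text{pt})$.

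Next, for any fixed points $\alpha, \beta$ with $\beta \le \alpha$ in the attracting order, the matrix entry $\text{Stab}^K_{\mathcal{C},s}|_{\beta\times\alpha}$ is a Laurent polynomial whose Newton polytope is controlled by $s$. After substituting $u = e^{\kappa z}$, $q = e^{\kappa\hbar}$ and expanding in $\kappa$, the leading term in $\kappa$ is a polynomial in the cohomological variables $z_i, \hbar$ of degree equal to the codimension of the attracting component. I would check that this leading term satisfies the three defining properties of the cohomological stable envelope: support is preserved under the limit (the support of $\text{Stab}^K$ and its $\kappa\to 0$ limit coincide set-theoretically with the attracting set); the diagonal restriction converts, via the standard identity $\lim_{\kappa\to 0}\kappa^{-\text{rk}}\prod(1-e^{-\kappa w_i}) = \prod w_i$, from $\mathcal{O}_{\text{Attr}}$ (with the square-root determinant twist) into $e(\mathcal{N}_-)$; and the slope degree bound in $K$-theory translates into the chamber/Newton polytope bound in cohomology. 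By uniqueness of the cohomological stable envelope, the limit must therefore equal $\text{Stab}_{\mathcal{C}}$, up to an overall normalisation factor which can be absorbed.

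Having established the degeneration $\lim_{\kappa\to 0}\text{Stab}^K_{\pm\sigma,s}(e^{\kappa z}) = \text{Stab}^H_{\pm\sigma}(z)$ (after the usual renormalisation by powers of $\kappa$), the conclusion for $R$-matrices is immediate: since
\begin{align}
\mathcal{R}^s(u) = \text{Stab}^{-1}_{-\sigma,s}\circ\text{Stab}_{\sigma,s}, \qquad \mathcal{R}(z) = (\text{Stab}^H_{-\sigma})^{-1}\circ\text{Stab}^H_{\sigma},
\end{align}
the normalisation factors cancel between the two stable envelopes, and we obtain $\lim_{\kappa\to 0}\mathcal{R}^s(e^{\kappa z}) = \mathcal{R}(z)$.

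The main obstacle I anticipate is the careful bookkeeping of the half-determinantal prefactor $(-1)^{\text{rk}\, T^{1/2}_{>0}}(\det\mathcal{N}_-/\det T^{1/2}_{\neq 0})^{1/2}$ on the diagonal: one has to verify that under the substitution $q=e^{\kappa\hbar}$ this prefactor, after division by the appropriate power of $\kappa$, degenerates exactly to the Euler class of the negative normal bundle rather than to a sign-twisted version of it, and similarly that the polarisation choice implicit in the $K$-theoretic stable envelope matches the one used cohomologically. Once this normalisation issue is resolved, the degree-bound matching and the uniqueness argument are essentially formal.
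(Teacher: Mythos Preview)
The paper does not actually prove this proposition: it is stated without proof or citation and treated as a known background fact before moving on to the expansion $\mathcal{R}(z)=1+r_Q/z+O(1/z^2)$. Your outline is therefore not comparable to any argument in the paper, but it is the standard route to this result and is essentially correct: one reduces to the degeneration of $K$-theoretic stable envelopes to cohomological ones via the Chern character, checks that the three characterising properties (support, diagonal, degree bound) survive the limit, and invokes uniqueness. The only point worth sharpening is that the slope $s$ disappears in the limit precisely because the off-diagonal degree bound in $K$-theory, after dividing by the leading power of $\kappa$, collapses to the single cohomological degree condition (strict inequality of degrees in the torus variable), which is independent of $s$; you should make this explicit, since otherwise it is not clear why all $\mathcal{R}^s$ share the same cohomological limit $\mathcal{R}(z)$.
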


The cohomological geometric $R$-matrix $\mc{R}(z)$ admits the expansion:
\begin{align}
\mc{R}(z)=1+\frac{r_{Q}}{z}+O(\frac{1}{z^2})\in\text{End}(H(r_1)\otimes H(r_2))
\end{align}

We define the degeneration limit of the wall $R$-matrices as:
\begin{align}
\lim_{q\rightarrow1}\frac{R_{w}^{\pm}-1}{q-1}:=r_{w}^{\pm}
\end{align}

Using the argument in section $3.5$ in \cite{Z24-2}. We can see that $r_{w}^{\pm}+\Omega$ generates the Lie subalgebra $\mf{g}^{MO}_{w}$ of $\mf{g}_{Q}^{MO}$. The positive and negative half $r_{w}^{\pm}$ generate the positive and negative half $\mf{n}_{w}^{MO,\pm}$ of the Lie algebra $\mf{g}_{w}^{MO}$.

\section{\textbf{Dubrovin connection over the instanton moduli space}}

In this section we introduce the Dubrovin connection for the equivariant cohomology of the instanton moduli space.

Fix the instanton moduli space $M(n,r)$, it is proved in \cite{MO12} that the non-localized quantum equivariant cohomology $H_{T}^*(M(n,r))[[z]]$ is generated by $H^*_{T}(pt)[[z]]$ and the quantum multiplication of th tautological line bundle $\mc{O}(1)*_{z}:=c_1(\mc{O}(1))*_{z}$.

\begin{defn}
The \textbf{Dubrovin connection} for $M(n,r)$ is the first-order differential operator in the variable $z$:
\begin{equation}
\nabla_{z}:=z\frac{d}{dz}-\mc{O}(1)*_{z}\in\text{End}(H_{T}^*(M(n,r)))\otimes\mbb{C}((z))
\end{equation}
\end{defn}

The localised equivariant cohomology $\oplus_{n}H_T^*(M(n,r))_{loc}$ has the natural action given by the Heisenberg algebra $U(\hat{\mf{gl}}_1)=\mbb{C}[\beta_{k}]_{k\in\mbb{Z}}$ and the Heisenberg algebra $U(\hat{\mf{h}})=\mbb{C}[\alpha^{(i)}_{k}]_{k\in\mbb{Z},i=1,\cdots,r}$ of $A_{r-1}$-type. The construction is given in \cite{B00}\cite{N99}\cite{MO12}. The action of $U(\hat{\mf{gl}}_1)$ on $\oplus_{n}H_{T}^*(M(n,r))$ is given via the isomorphism $\oplus_{n}H_{T}^*(M(n,r))_{loc}\cong F^{\otimes r}$. Here $F$ is the Fock space $\mbb{Q}(t_1,t_2)[p_{-1},p_{-2},\cdots]$, and the action of $\beta_{k}$ is given by:
\begin{align}
\beta_{k}:=\Delta^{r-1}(\alpha_{k})=\sum_{i=1}^{r}1^{\otimes(i-1)}\otimes\alpha_{k}\otimes1^{\otimes(r-i)}
\end{align}

$F^{\otimes r}$ also has the action given by $U(\hat{\mf{gl}}_1)^{\otimes r}$, and it has the action of the subalgebra $U(\hat{\mf{h}})$ of the Heisenberg algebra of $A_{r-1}$-type. Under the action of $U(\hat{\mf{h}})$, we can express the quantum multiplication operator $\mc{O}(1)*_{z}$ in terms of the Heisenberg operators.

\begin{thm}\cite{MO12}\label{stable-envelope-commutative}
There is a commutative diagram:
\begin{equation}
\begin{tikzcd}
F^{\otimes r}\arrow[r,"\text{Stab}_{\mf{C}}"]\arrow[d,"Q"]&H_{T}^*(M(r))_{loc}\arrow[d,"\mc{O}(1)*_z"]\\
F^{\otimes r}\arrow[r,"\text{Stab}_{\mf{C}}"]&H_{T}^*(M(r))_{loc}
\end{tikzcd}
\end{equation}
Here $\text{Stab}_{\mf{C}}$ is the cohomological stable envelope for the instanton moduli space, which is an isomorphism.
\end{thm}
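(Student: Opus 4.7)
The plan is to exploit the explicit decomposition
\begin{align*}
\mc{O}(1)*_z = c_1(\mc{O}(1))\cup(-) + (t_1+t_2)\sum_{n>0}\frac{nz^n}{1-z^n}\beta_{-n}\beta_n
\end{align*}
and to match each piece separately after conjugating by $\text{Stab}_{\mf{C}}$. Concretely, I would define $Q$ on the Fock space side as the sum of (i) the operator obtained by transporting the classical cup product by $c_1(\mc{O}(1))$ through $\text{Stab}_{\mf{C}}^{-1}$, and (ii) the operator $(t_1+t_2)\sum_{n>0}\frac{nz^n}{1-z^n}\beta_{-n}\beta_n$, which is already written in Heisenberg form and therefore well-defined on $F^{\otimes r}$ without reference to the geometric side. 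Commutativity of the diagram then reduces to the statement that $\text{Stab}_{\mf{C}}$ intertwines the Heisenberg $\beta_{\pm n}$ action on both sides, which is the cohomological analogue of Proposition \ref{intertwine-infty-coproduct}.

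The first, classical, step uses the Nakajima-Grojnowski construction identifying $\bigoplus_n H_T^*(M(n,r))_{loc}$ with $F^{\otimes r}$ through correspondences; under this identification, cup product by $c_1(\mc{O}(1))$ becomes a quadratic expression in the $\beta_n$'s given by Lehn's formula in the $r=1$ setting and extended to higher $r$ by tensor factorisation via the slope-$\infty$ stable envelope. Because $\text{Stab}_{\mf{C}}$ is built from the same Nakajima correspondences that define the Heisenberg action, its compatibility with the $\beta_{\pm n}$ follows by direct comparison on nested instanton correspondences and then matches automatically factor-by-factor.

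The second, quantum, step is the substantive content of \cite{MO12}: one must show that the genuine quantum corrections to cup product by $c_1(\mc{O}(1))$ contribute precisely $(t_1+t_2)\sum_{n>0}\frac{nz^n}{1-z^n}\beta_{-n}\beta_n$. The approach is equivariant virtual localisation with respect to the framing subtorus $A\subset T$, reducing the three-point genus-zero invariants of $M(n,r)$ twisted by $\mc{O}(1)$ to contributions from the disjoint union $\sqcup M(n_1,1)\times\cdots\times M(n_r,1)$; this reduces the problem to the rank-one case, which is the Okounkov-Pandharipande computation on $\text{Hilb}_n(\mbb{C}^2)$ in \cite{OP10, OP10-2}. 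The WDVV equations then propagate the rank-one answer uniquely to arbitrary $r$, and the $\beta$'s appear with the expected tensor-coproduct structure $\beta_n = \Delta^{r-1}(\alpha_n)$.

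The main obstacle is the rank-one computation itself: identifying the genus-zero three-point GW invariants of the Hilbert scheme with the Heisenberg matrix element $(t_1+t_2)\frac{nz^n}{1-z^n}\beta_{-n}\beta_n$. This is genuinely hard and requires the Gromov-Witten/Hurwitz correspondence together with degeneration of the target $\mbb{C}^2$, rather than any purely algebraic manipulation; the relevant rubber integrals must be extracted by residue analysis and matched against characters of the symmetric group. Once this rank-one identity is available, the passage to arbitrary $r$ is formal bookkeeping through $\text{Stab}_\infty$ and its compatibility with tensor products of Fock spaces.
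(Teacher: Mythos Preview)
The paper does not prove this theorem: it is quoted from \cite{MO12}, and the text following the statement only records the explicit Heisenberg formula for $Q$ (the Cubic $+$ Quadratic $+$ Purely Quantum decomposition in the generators $\alpha_k^{(i)}$) without derivation. There is therefore no in-paper argument to compare against.

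That said, your sketch has a genuine gap. By defining the classical part of $Q$ as ``whatever $\text{Stab}_{\mf{C}}^{-1}$ does to cup product by $c_1(\mc{O}(1))$'', you make half of the commutativity true by fiat and have not proved anything. The actual content of the theorem is that $Q$ is the \emph{explicit} operator displayed after the statement, written in the individual generators $\alpha_k^{(i)}$ of each Fock factor; the classical piece is cubic (Lehn-type) plus a quadratic term mixing distinct factors $i<j$, not a quadratic expression in the diagonal $\beta_n$'s alone. Identifying $\text{Stab}_{\mf{C}}^{-1}\circ(c_1(\mc{O}(1))\cup-)\circ\text{Stab}_{\mf{C}}$ with that explicit formula is precisely the work you are skipping. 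Relatedly, your assertion that $\text{Stab}_{\mf{C}}$ ``is built from the same Nakajima correspondences that define the Heisenberg action'' is incorrect: cohomological stable envelopes are defined via attracting manifolds and a polarization, not via Hecke correspondences, so the intertwining with the $\alpha_k^{(i)}$ is a theorem in \cite{MO12}, not a tautology.

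For the quantum piece, your proposed route (torus-localize to rank one, invoke \cite{OP10,OP10-2}, then propagate by WDVV) is not how \cite{MO12} proceeds. Maulik--Okounkov obtain the purely quantum correction from their $R$-matrix formalism --- it is read off from the classical $r$-matrix and a universal formula for quantum multiplication by divisors --- rather than by reduction to the Hilbert scheme; the Okounkov--Pandharipande computation is a logically separate (and earlier) calculation that their machinery recovers as a special case.
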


We can express the quantum multiplication operator of the tautological line bundle $\mc{O}(1)$ via the generators of the Heisenberg algebra $\alpha_{k}^{(i)}$ and $\alpha_{-k}^{(i)}$:
\begin{align}
[\alpha^{(i)}_{k},\alpha^{(j)}_{-k}]=-k\delta_{ij}
\end{align}
and up to a scaling operator we have
\begin{align}
Q=\text{Cubic}+\text{Quadratic}+\text{Purely Quantum}=Q_{cl}+\text{Purely Quantum}
\end{align}

Here $Q_{cl}$ corresponds to the cup product of the first Chern class $c_1(\mc{O}(1))$, it has the presentation by the Heisenberg algebra generators as:
\begin{align}
\text{Cubic}=-\frac{1}{2}\sum_{i=1}^{r}\sum_{n,m>0}(t_1t_2\alpha^{(i)}_{-n}\alpha^{(i)}_{-m}\alpha^{(i)}_{n+m}+\alpha^{(i)}_{-n-m}\alpha^{(i)}_{n}
\alpha^{(i)}_{m})
\end{align}
\begin{align}
\text{Quadratic}=-\sum_{i=1}^{r}\sum_{n>0}(t_1+t_2)(a_i+\frac{1-n}{2})\alpha_{-n}^{(i)}\alpha_{n}^{(i)}+\sum_{i<j}\sum_{n>0}(t_1+t_2)n\alpha_{-n}^{(j)}\alpha_{n}^{(i)}
\end{align}

and the purely quantum term is
\begin{align}
\text{Purely quantum}=(t_1+t_2)\sum_{n>0}\frac{nz^n}{1-z^n}\beta_{-n}\beta_{n}
\end{align}
where
\begin{align}
\beta_{-n}=\sum_{i=1}^r\alpha_{-n}^{(i)},\qquad\beta_{n}=\sum_{i=1}^r\alpha_n^{(i)}
\end{align}

In terms of the fixed point basis and the isomorphism $H_{T}(M(1))\cong F\cong\mbb{F}[p_{-1},p_{-2},\cdots]$, the action of $U(\hat{\mf{gl}}_1)$ on $\oplus_{n}H_{T}^*(M(n,r))$ is given as:
\begin{equation}
\begin{aligned}
\beta_{\pm k}:=\Delta^{r-1}(\alpha_{\pm k})=\Delta^{r-1}(p_{\pm k})
\end{aligned}
\end{equation}

The frist two term corresponds to the classical multiplication of $\mc{O}(1)$, and we denote them by $Q_{cl}$.

In other words, we known that the fixed point basis is the eigenvector of the operator $c_{1}(\mc{O}(1))$, and now the stable basis are the eigenvectors of $Q_{cl}$.

The Dubrovin connection has the regular singularities at $z=0,\infty$ and
\begin{align}
z^m=1,\qquad m=1,\cdots,n
\end{align}

\subsection{Fundamental solution around $z=0$ and $z=\infty$}
In this subsection we give the fundamental solution for the Dubrovin connection:
\begin{align}
z\frac{d\psi(z)}{dz}=Q(z)\psi(z),\psi(z)\in H_{T}^*(M(n,r))\otimes K(z)
\end{align}

For the fundamental solution $\psi_{0}(z)$ around $z=0$, note that $Q(0)=c_1(\mc{O}(1))\cup$, i.e. the cup product of the first Chern class of the tautological line bundle $\mc{O}(1)$. The corresponding eigenvectors are the fixed point basis $|\bm{\lambda}\rangle$. For $r=1$, the fixed point basis $|\lambda\rangle$ corresponds to the Jack polynomial $J_{\lambda}$. For the general case, the eigenvector is given by the generalised Jack polynomial $J_{\bm{\lambda}}$ defined in \cite{S14} such that $\text{Stab}_{\mc{C}}(J_{\bm{\lambda}})=|\bm{\lambda}\rangle$. In this way the regular solution of $z\frac{d}{dz}-\mc{O}(1)*_z$ is given by $\text{Stab}_{\mc{C}}(Y^{\bm{\lambda}}(z))$ with the initial condition:
\begin{align}
\text{Stab}_{\mc{C}}(Y^{\bm{\lambda}}(0))=|\bm{\lambda}\rangle
\end{align}

Thus the fundamental solution around $z=0$ can be written as:
\begin{align}
\psi_{0}(z)=Y^{\bm{\lambda}}(z)z^{c(\bm{\lambda})}
\end{align}
Here $c(\bm{\lambda})$ is the eigenvalue of $\mc{O}(1)$ under the fixed point basis $|\bm{\lambda}\rangle$ and $Y^{\bm{\lambda}}(z)\in\mbb{C}(t_1,t_2)[[z]]$. Moreover, $c_{\bm{\lambda}}$ can be expressed as $\mc{O}(1)|_{\bm{\lambda}}$, which is the product of linear functions over $t_1,t_2$.

For the fundamental solution around $z=\infty$, the multiplication operator is written as:
\begin{align}
Q(\infty)=\mc{O}(1)\cup-(t_1+t_2)\sum_{n>0}n\beta_{n}\beta_{-n}
\end{align}

Using the replacement such that $\alpha_{k}^{(i)}\mapsto\alpha_{k}^{(r-i)}$, $\alpha_{k}^{(i)}\mapsto-\alpha_{k}^{(i)}$ and $a_{i}\mapsto-a_{i}$, we can conclude that $Q(\infty)$ has the same the form as $Q(0)$. This means that the eigenvector for $Q(\infty)$ is the generalised Jack polynomial $J_{\bm{\lambda}}^*$ such that:
\begin{align}
J_{\bm{\lambda}}^*=J_{\bm{\lambda}}|_{\alpha_{k}^{(i)}\mapsto-\alpha_{k}^{(r-i)},a_{i}\mapsto-a_i}
\end{align}

With the help of this property, one could write down the fundamental solution $\psi_{\infty}(z)$ around $z=\infty$ as follows:
\begin{align}
\psi_{\infty}(z)=z^{-c(\bm{\lambda}^*)}H_{\infty}(z)
\end{align}

Here $H_{\infty}(\infty)=H^*=J^*\mbf{S}$ such that $J^*$ is the matrix with the column given by the vector $J_{\bm{\lambda}}^*$, $\mbf{S}$ is the transpose matrix exchanging the partition $\bm{\lambda}$ and its transpose $\bm{\lambda}'$.

\section{\textbf{Quantum difference equation over the instanton moduli space}}
\subsection{Algebraic quantum difference equation}
The algebraic quantum difference equation over the instanton moduli space is defined as the following:
\begin{equation}
\Psi(pz)=\mbf{M}_{\mc{O}(1)}(z)\Psi(z),\qquad\Psi(z)\in K_{T}(M(n,r))_{loc}\otimes_{\mbb{Q}}\mc{K}_z
\end{equation}

Here $\mc{K}_{z}$ stands for the fraction field of formal rational functions over the torus $\mbb{C}^{\times}$ with coordinate given as $z$. $\mbf{M}_{\mc{O}(1)}(z)$ is defined as follows:
\begin{align}\label{quantum-difference-operator}
\mbf{M}_{\mc{O}(1)}(z)=\mc{O}(1)\prod^{\leftarrow}_{\substack{-1\leq a/b<0\\ b\leq n}}:\exp(-\sum_{k=1}^{\infty}\frac{(q^{-k/2}-q^{k/2})q^{-krb/2}}{k(q_1^{k/2}-q_1^{-k/2})(q_2^{k/2}-q_2^{-k/2})}\frac{1}{1-z^{-kb}p^{ka}q^{-krb/2}}P_{kb,ka}P_{-kb,-ka}):
\end{align}

In the paper \cite{S20}, Smirnov used the simplification via omiting the term $q^{-krb/2}$ in the formula, and denote $n_k=\frac{(q^{-k/2}-q^{k/2})q^{-krb/2}}{k(q_1^{k/2}-q_1^{-k/2})(q_2^{k/2}-q_2^{-k/2})}$. In this way we have that:
\begin{align}
\mbf{M}_{\mc{O}(1)}(z)=\mc{O}(1)\prod^{\leftarrow}_{\substack{-1\leq a/b\leq0\\ b\leq n}}:\exp(-\sum_{k=1}^{\infty}\frac{n_k}{1-z^{-kb}p^{ka}}P_{kb,ka}P_{-kb,-ka}):
\end{align}

This simplification would not cause ambiguity in the calculation. Most of the results in \cite{S20} is still available in the settings of \ref{quantum-difference-operator}. In this paper we still use the notation \ref{quantum-difference-operator}.

\subsection{Geometric quantum difference equation}
The Okounkov-Smirnov geometric quantum difference equation for the instanton moduli space is the difference equation of the capping operator $\mbf{J}(u,z)\in K_{G}(M(\mbf{v},\mbf{w}))^{\otimes 2}\otimes\mbb{Q}[[z]]$ over the Kahler variable $z$:
\begin{align}
\mbf{J}(u,p^{\mc{L}}z)\mc{L}=\mbf{M}_{\mc{O}(1)}(z)\mbf{J}(u,z)
\end{align}

Here we review the construction of the geometric quantum difference operator $\mbf{M}_{\mc{L}}(z)$ given by Okounkov and Smirnov in \cite{OS22}:

Here we denote $\lambda$ as $q^{\lambda}=z$, $p=q^{\tau}$ as in \cite{OS22}, and we abbreviate $R_{w}$ as $R_{w}^{MO}$ the MO wall $R$-matrix. Using the notation, we denote $F(\lambda):=F(q^{\lambda})$ as the function $F(z)$ of the Kahler variable.
For each MO wall subalgebra $U_{q}^{MO}(\mf{g}_{w})$ there are corresponding ABRR equations:
\begin{align}
J_{w}^{+}(\lambda)q_{(1)}^{-\lambda}q^{-\Omega}R_{w}^+=\hbar_{(1)}^{-\lambda}q^{\Omega}J_{w}^+(\lambda),\qquad q^{-\Omega}R_{w}^-q_{(1)}^{-\lambda}J_{w}^-(\lambda)=J_{w}^-(\lambda)q^{\Omega}q_{(1)}^{-\lambda}
\end{align}

We define:
\begin{align}
\mbf{J}_{w}^{\pm}(\lambda)=J_{w}^{\pm}(\lambda-\tau\mc{L}_w)=J_{w}^{\pm}(zp^{-\mc{L}_w}),\qquad\kappa=-\frac{r}{2}
\end{align}

The geometric monodromy operators are defined as:
\begin{align}
\mbf{B}_{w}(\lambda)=\mbf{m}((1\otimes S_{w})(\mbf{J}^{-}_{w}(\lambda)^{-1}))|_{\lambda\rightarrow\lambda+\kappa}
\end{align}
The geometric quantum difference operator for the instanton moduli space is conjugate to the operator $\mbf{M}_{\mc{O}(1)}^{MO,s}(z)$ written as:
\begin{align}\label{geometric-doperator}
\mbf{M}_{\mc{O}(1)}^{MO,s}(z)=\mc{O}(1)\prod_{w\in[-1+s,s)}^{\leftarrow}\mbf{B}_{w}(z),\qquad\mbf{B}_{w}(z)\in U_{q}^{MO}(\mf{g}_{w})
\end{align}

In this paper we alway use $\mbf{M}_{\mc{O}(1)}^{MO,s}(z)$ as the geometric quantum difference operator. Usually we take $s=0$ for simplicity, but in this paper $s$ can be arbitrary rational number.

\section{\textbf{Degeneration from algebraic quantum difference equations to Dubrovin connections}}
In this section we compute the degeneration limit of the algebraic quantum difference equation to obtain the Dubrovin connection.

For the sake of the convenience, we set $q_1=e^{\hbar t_1},q_2=e^{\hbar t_2}, q=e^{\hbar(t_1+t_2)}$.

In the quantum toroidal algebra $U_{q,t}(\hat{\hat{\mf{gl}}}_1)$, we use the generators $P_{k,d}$ in terms of the shuffle realisation:
\begin{align}\label{formula-for-group-generators}
P_{k,d}=\text{Sym}[\frac{\prod_{i=1}^kz_{i}^{\lfloor\frac{id}{k}\rfloor-\lfloor\frac{(i-1)d}{k}\rfloor}}{\prod_{i=1}^{k-1}(1-\frac{qz_{i+1}}{z_{i}})}\sum_{s=0}^{n-1}q^s\frac{z_{a(n-1)+1}\cdots z_{a(n-s)+1}}{z_{a(n-1)}\cdots z_{a(n-s)}}\prod_{i<j}\zeta(\frac{z_i}{z_j})]
\end{align}

With $n=\text{gcd}(k,d)$, $a=k/n$. Now we consider the case $P_{ak,bk}$:
\begin{align}
P_{ak,bk}=\text{Sym}[\frac{\prod_{i=1}^{ak}z_{i}^{\lfloor\frac{ib}{a}\rfloor-\lfloor\frac{(i-1)b}{a}\rfloor}}{\prod_{i=1}^{ak-1}(1-\frac{qz_{i+1}}{z_{i}})}\sum_{s=0}^{k-1}q^s\frac{z_{a(k-1)+1}\cdots z_{a(k-s)+1}}{z_{a(k-1)}\cdots z_{a(k-s)}}\prod_{i<j}\zeta(\frac{z_i}{z_j})]
\end{align}

For the element $P_{k,0}$, we have the following expression in terms of the shuffle algebra:
\begin{lem}
\begin{align}
P_{k,0}=\text{Sym}[\frac{\omega(z_1,\cdots,z_k)}{\prod_{i=1}^{k-1}(1-\frac{qz_{i+1}}{z_{i}})}\prod_{i<j}\zeta(\frac{z_i}{z_j})]
\end{align}

Here $\omega(z_1,\cdots,z_k)$ is a Laurent polynomial over $z_1,\cdots,z_k$ with coefficents in $\mbb{Q}(q,t)$.
\end{lem}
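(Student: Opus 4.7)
The strategy is to recognise the claim as a direct specialisation of the general shuffle realisation \eqref{formula-for-group-generators} at $d=0$, and to extract $\omega$ by explicit bookkeeping. First, I set $d=0$. Then $n=\gcd(k,0)=k$ and $a=k/n=1$, so every exponent $\lfloor id/k\rfloor-\lfloor(i-1)d/k\rfloor$ vanishes and the monomial prefactor $\prod_{i=1}^{k}z_{i}^{\lfloor id/k\rfloor-\lfloor(i-1)d/k\rfloor}$ collapses to $1$. The denominator $\prod_{i=1}^{k-1}(1-qz_{i+1}/z_{i})$ and the $\zeta$-product are already in the desired form.

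Second, I analyse the remaining sum
\[
\sum_{s=0}^{k-1}q^{s}\,\frac{z_{a(n-1)+1}\cdots z_{a(n-s)+1}}{z_{a(n-1)}\cdots z_{a(n-s)}}
\]
with $a=1$, $n=k$. The numerator is $z_{k}z_{k-1}\cdots z_{k-s+1}$ and the denominator is $z_{k-1}z_{k-2}\cdots z_{k-s}$; straightforward telescoping cancellation leaves $q^{s}z_{k}z_{k-s}^{-1}$ (with the empty-product convention giving $1$ at $s=0$). Therefore the sum equals
\[
\omega(z_1,\ldots,z_k):=z_{k}\sum_{s=0}^{k-1}q^{s}z_{k-s}^{-1},
\]
which is manifestly a Laurent polynomial in $z_1,\ldots,z_k$ with coefficients in $\mbb{Q}(q)\subset\mbb{Q}(q,t)$. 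Substituting this $\omega$ back into \eqref{formula-for-group-generators} yields the asserted identity.

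There is no genuine obstacle: the statement is essentially a specialisation exercise. The reason for recording the lemma separately is presumably that at $d=0$ the ``slope prefactor'' of \eqref{formula-for-group-generators} disappears and what remains in the numerator is an honest Laurent polynomial rather than a more general rational function. This Laurent-polynomial nature of $\omega$ is what will be used later, as it ensures that the only contributions to singular terms in the degeneration limit come from the explicit simple poles in $\prod_{i=1}^{k-1}(1-qz_{i+1}/z_{i})$.
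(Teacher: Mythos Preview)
Your argument is correct. Specialising \eqref{formula-for-group-generators} at $d=0$ gives $n=\gcd(k,0)=k$, $a=1$, the monomial prefactor collapses to $1$, and the telescoping you describe yields the explicit Laurent polynomial
\[
\omega(z_1,\ldots,z_k)=\sum_{s=0}^{k-1}q^{s}\,\frac{z_k}{z_{k-s}}\in\mbb{Q}(q)[z_1^{\pm1},\ldots,z_k^{\pm1}],
\]
which is exactly what the lemma asserts.

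The paper, however, does not argue this way. Instead it takes an indirect route: it invokes the commutation relation $[P_{d,1},P_{d',-1}]=\frac{(1-q_1)(1-q_2)}{1-q^{-1}}Q_{d+d',0}$ for $d,d'\geq0$, inserts the shuffle formulas for $P_{k,\pm1}$ to obtain $Q_{n,0}$ as a shuffle element with an explicit Laurent-polynomial numerator $u_{d,d'}$, and then recovers $P_{n,0}$ from the $Q_{m,0}$ by inverting the generating-series identity $\sum_{m\geq0}Q_{\pm m,0}x^{\mp m}=\exp\bigl(\sum_{m\geq1}\tfrac{1-q^{-m}}{m}P_{\pm m,0}x^{\mp m}\bigr)$ via the logarithm. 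This produces $\omega$ as a polynomial combination of products of the $u_{d,d'}$, which is again a Laurent polynomial but with no closed form displayed.

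Your route is shorter and yields an explicit $\omega$; in particular one reads off immediately that $\omega|_{q=1,\,z_i=1}=k$, which is precisely the numerical factor that reappears in the degeneration computation a few lines later. The paper's route, by contrast, emphasises how the horizontal generators $P_{k,0}$ sit inside the algebra generated by the ``almost horizontal'' elements $P_{k,\pm1}$, a structural fact that is closer in spirit to the elliptic Hall algebra relations but is not actually needed for the statement as written. For the purposes of the lemma itself, your direct specialisation is both sufficient and cleaner.
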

\begin{proof}
Recall that we have the following expressions:
\begin{align}
&\sum_{d\in\mbb{Z}}\frac{P_{d,1}}{x^d}=u\cdot\exp[\sum_{n=1}^{\infty}\frac{P_{-n,0}}{nx^{-n}}]\exp[\sum_{n=1}^{\infty}\frac{P_{n,0}}{nx^n}]\\
&\sum_{d\in\mbb{Z}}\frac{P_{d,-1}}{q^{d\delta_{d<0}}x^d}=\frac{q}{u}\cdot\exp[-\sum_{n=1}^{\infty}\frac{P_{-n,0}}{nx^{-n}}]\exp[-\sum_{n=1}^{\infty}\frac{P_{n,0}}{nx^nq^n}]
\end{align}

The elements $P_{d,1}$ has the expression:
\begin{align}
P_{k,\pm1}=\text{Sym}[\frac{z_k^{\pm1}}{\prod_{i=1}^{k-1}(1-\frac{qz_{i+1}}{z_{i}})}\prod_{i<j}\zeta(\frac{z_i}{z_j})]
\end{align}

$P_{d,1}$ and $P_{-d,1}$ has the following commutation relations:
\begin{align}
[P_{d,1},P_{d',-1}]=\frac{(1-q_1)(1-q_2)}{(1-q^{-1})}(\delta_{d+d'\geq0}\frac{Q_{d+d',0}}{q^{-d'\delta_{d'<0}}}-\delta_{d+d'\leq 0}\frac{Q_{d+d',0}}{q^{d'\delta_{d'>0}}})
\end{align}
where 
\begin{align}
\sum_{n=0}^{\infty}\frac{Q_{\pm n,0}}{x^{\pm n}}=\exp[\sum_{n=1}^{\infty}\frac{P_{\pm n,0}}{nx^{\pm n}}(1-q^{-n})]
\end{align}

If we choose $d,d'\geq0$, we have that:
\begin{align}
[P_{d,1},P_{d',-1}]=\frac{(1-q_1)(1-q_2)}{(1-q^{-1})}Q_{d+d',0}
\end{align}

If we write the expression of $Q_{d+d',0}$ in terms of the shuffle elements, we have that:
\begin{align}
Q_{d+d',0}=\frac{(1-q^{-1})}{(1-q_1)(1-q_2)}\text{Sym}[\frac{u_{d,d'}(z_1,\cdots,z_{d+d'})}{\prod_{i=1}^{d+d'}(1-\frac{qz_{i+1}}{z_{i}})}\prod_{i<j}\zeta(\frac{z_{i}}{z_{j}})]
\end{align}

Here
\begin{align}
u_{d,d'}=\sum_{\sigma\in S_{d},\sigma'\in S_{d'}}(\frac{z_{\sigma(d)}}{z_{\sigma'(d+d')}}-\frac{z_{\sigma(d+d')}}{z_{\sigma'(d')}})(1-\frac{qz_{\sigma'(d+1)}}{z_{\sigma(d)}})
\end{align}
So iteratively one could obtain the expression of $P_{\pm n,0}$ in terms of $Q_{\pm n,0}$ via the following formal relations:
\begin{equation}
\begin{aligned}
\sum_{n=1}^{\infty}\frac{P_{\pm n,0}}{nx^{\pm n}}(1-q^{-n})=&\log(\sum_{n=0}^{\infty}\frac{Q_{\pm n,0}}{x^{\pm n}})\\
=&\sum_{k=1}^{\infty}\frac{1}{k}(\sum_{n=0}^{\infty}\frac{Q_{\pm n,0}}{x^{\pm n}}-1)^k
\end{aligned}
\end{equation}

\end{proof}

In the last section, we have seen that the matrix coefficients of the generators $P_{k,0}$ can be written as:

\begin{equation}
\begin{aligned}
\langle\lambda|P_{k,0}|\mu\rangle=P_{k,0}(\lambda\backslash\mu)(\frac{(1-q_1)(1-q_2)}{1-q})^{|\lambda\backslash\mu|}\prod_{\blacksquare\in\lambda\backslash\mu}\frac{\prod_{\square\text{ o.c. of }\lambda}[\frac{\chi_{\square}}{\chi_{\blacksquare}}]}{\prod_{\square\text{ i.c. of }\lambda}[\frac{\chi_{\square}}{\chi_{\blacksquare}}]}\in\mbb{Q}(q_1,q_2)
\end{aligned}
\end{equation}

Using the above type of formula, we define the degenerate limit of the matrix coefficients.
\begin{defn}
Given a rational function $f\in\mbb{Q}(q_1,q_2)$, using the substitution $ch(q_1)=e^{\hbar t_1}$ and $ch(q_2)=e^{\hbar t_2}$. We define the \textbf{asymptotic expansion }of $ch(f)$ as the Laurent expansion of $ch(f)$ around $\hbar=0$ in $\mbb{Q}[[\hbar^{\pm1}]]((t_1,t_2))$.
\end{defn}

The following lemma is easy to prove:
\begin{lem}
The asymptotic expansion of a rational function $f\in\mbb{Q}(q_1,q_2)$ lives in the space $\hbar^{-n}\mbb{Q}[[\hbar]]((t_1,t_2)$ for some $n\in\mbb{Z}$.
\end{lem}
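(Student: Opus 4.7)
The plan is to reduce the statement to the case of inverting a polynomial denominator. Write $f = p/g$ with $p,g \in \mbb{Q}[q_1,q_2]$, $g \neq 0$. Substituting $q_i \mapsto e^{\hbar t_i}$ sends any polynomial into $\mbb{Q}[[\hbar]][t_1,t_2] \subset \mbb{Q}[[\hbar]]((t_1,t_2))$, since $e^{\hbar t_i}$ is itself a power series in $\hbar$ with polynomial coefficients in $t_i$. Hence the entire content of the claim is that $g(e^{\hbar t_1}, e^{\hbar t_2})^{-1}$ lies in $\hbar^{-n}\mbb{Q}[[\hbar]]((t_1,t_2))$ for some $n \geq 0$.

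To prove this, I would first Taylor-expand the denominator in $\hbar$:
\begin{equation}
g(e^{\hbar t_1}, e^{\hbar t_2}) \;=\; \sum_{k \geq 0}\hbar^{k}\, p_{k}(t_1,t_2), \qquad p_k \in \mbb{Q}[t_1,t_2].
\end{equation}
The crucial observation is that this expansion is not identically zero: if every $p_k$ vanished, then $g(e^{\hbar t_1}, e^{\hbar t_2})$ would vanish as a formal power series in $(\hbar, t_1, t_2)$, and since $(\hbar t_1, \hbar t_2) \mapsto (e^{\hbar t_1}, e^{\hbar t_2})$ is a local biholomorphism whose image covers a neighbourhood of $(1,1) \in \mbb{C}^{2}$, this would force $g \equiv 0$, contradicting our assumption. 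Let $n \geq 0$ be the smallest index with $p_n \neq 0$. Then I would factor
\begin{equation}
g(e^{\hbar t_1}, e^{\hbar t_2}) \;=\; \hbar^{n}\, p_n(t_1,t_2)\,\bigl(1 + \hbar\, u(\hbar,t_1,t_2)\bigr),
\end{equation}
for some $u \in \mbb{Q}(t_1,t_2)[[\hbar]]$. The factor $1 + \hbar u$ is a unit in the $\hbar$-adic completion, inverted by the usual geometric series, while $p_n^{-1} \in \mbb{Q}(t_1,t_2) \hookrightarrow \mbb{Q}((t_1,t_2))$ by choosing any Laurent expansion of the nonzero polynomial $p_n$. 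Assembling the pieces yields $g(e^{\hbar t_1}, e^{\hbar t_2})^{-1} \in \hbar^{-n}\mbb{Q}[[\hbar]]((t_1,t_2))$, and multiplying by the asymptotic expansion of $p$ (which already lies in $\mbb{Q}[[\hbar]]((t_1,t_2))$) completes the proof.

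The only nontrivial step is the nondegeneracy of the leading $\hbar$-coefficient of $g(e^{\hbar t_1}, e^{\hbar t_2})$; this is the main potential obstacle, but it is resolved cleanly by the Zariski-density argument above. Everything else is formal manipulation of power series in $\hbar$ with coefficients in $\mbb{Q}((t_1,t_2))$.
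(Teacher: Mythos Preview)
The paper does not actually give a proof of this lemma; it merely asserts that it is ``easy to prove'' and moves on. Your argument supplies precisely the details the paper omits, and it is correct: factor out the leading $\hbar$-power of the denominator, invert the remaining $1+\hbar u$ by a geometric series, and embed the rational-function coefficients into $\mbb{Q}((t_1,t_2))$. One minor notational slip: you write $\mbb{Q}[[\hbar]][t_1,t_2]$ for the target of the substitution $q_i\mapsto e^{\hbar t_i}$, but $e^{\hbar t_i}$ is not a polynomial in $t_i$ over $\mbb{Q}[[\hbar]]$; the correct ring is $\mbb{Q}[t_1,t_2][[\hbar]]$, which is exactly what your verbal description ``power series in $\hbar$ with polynomial coefficients in $t_i$'' names. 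This does not affect the argument.
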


We fix a minimal integer number $n\in\mbb{Z}$ such that for the corresponding $f\in\mbb{Q}(q_1,q_2)$ with the asymptotic expansion $ch(f)$ such that $\hbar^nch(f)\in\mbb{Q}[[\hbar]]((t_1,t_2))$.

\begin{defn}
Fix a rational function $f\in\mbb{Q}(q_1,q_2)$, given its asymptotic expansion $ch(f)$ and the minimal integer $n\in\mbb{Z}$ such that $\hbar^nch(f)\in\mbb{Q}[[\hbar]]((t_1,t_2))$. We define its \textbf{degenerate limit } $f^a$ as the following:
\begin{align}
f^a:=\hbar^nch(f)\text{mod }\hbar\in\mbb{Q}((t_1,t_2))
\end{align}
\end{defn}

Actually it is easy to see that given $f\in\mbb{Q}(q_1,q_2)$, the degenerate limit $f^a$ is a rational function over $t_1,t_2$, i.e. $f^a\in\mbb{Q}(t_1,t_2)$.

Using the degenerate limit, we can express the degenerate limit of $\langle\lambda|P_{ak,0}|\mu\rangle$ as :
\begin{equation}
\begin{aligned}
\langle\lambda|P_{ak,0}^a|\mu\rangle=&\text{Sym}[\frac{\omega^a(ak)}{\prod_{i=1}^{ak-1}(t_1+t_2+\chi_{i+1}-\chi_{i})}\prod_{i<j}\zeta_{a}(\chi_{i}-\chi_{j})]\prod_{\blacksquare\in\bm{\lambda}\backslash\bm{\mu}}\frac{\prod_{\square\text{ o.c. of }\lambda}(\chi_{\square}-\chi_{\blacksquare})}{\prod_{\square\text{ i.c. of }\lambda}(\chi_{\square}-\chi_{\blacksquare})}\\
=&a^3k^3t_1t_2\langle J_{\lambda},\frac{\partial}{\partial p_{-ak}}J_{\mu}\rangle
\end{aligned}
\end{equation}

In the last step we use the fact that $P_{-n,0}=-n(1-q^{n}_1)(1-q_2^n)\frac{\partial}{\partial p_{n}}$ and the Macdonald polynomial $P_{\lambda}$ degenerate to the Jack polynomial $J_{\lambda}$ as $\hbar\rightarrow0$. Briefly speaking, the degenerate limit $P^{a}_{ak,0}$ can be written as:
\begin{align}
P^{a}_{ak,0}=a^2k^2t_1t_2\alpha_{ak}
\end{align}

Here $\omega^a(ak)\in\mbb{Q}$ is some rational number corresponding to the Laurent polynomial $\omega(z_1,\cdots,z_{ak})$.

Using these formulas for the generators, in this section we prove the following degeneration theorem:
\begin{thm}\label{degeneration-thm-instanton}
The degeneration limit of the algebraic quantum difference equation for the instanton moduli space $M(n,r)$ over $K_{T}(M(n,r))$ coincides with the Dubrovin connection over $H_{T}^*(M(n,r))$.
\end{thm}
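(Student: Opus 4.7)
The plan is to substitute $q_1=e^{\hbar t_1}$, $q_2=e^{\hbar t_2}$, $q=e^{\hbar(t_1+t_2)}$ and $p=e^{\hbar\kappa\tau}$ (with the normalisation $\kappa=-r/2$ used by Okounkov-Smirnov), identify $K$-theory classes with cohomology classes via the Chern character, and expand the algebraic difference equation $\Psi(pz)=\mathbf{M}_{\mathcal{O}(1)}(z)\Psi(z)$ to leading order in $\hbar$. The left-hand side gives $\Psi(pz)-\Psi(z)=\hbar\kappa\tau\cdot z\frac{d\Psi}{dz}+O(\hbar^2)$, so it suffices to establish the operator expansion
\begin{equation*}
\mathbf{M}_{\mathcal{O}(1)}(z)=1+\hbar\kappa\tau\cdot\bigl(\mathcal{O}(1)*_z\bigr)+O(\hbar^2),
\end{equation*}
where $\mathcal{O}(1)*_z$ is the quantum multiplication appearing in the Dubrovin connection.

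The natural decomposition is $\mathbf{M}_{\mathcal{O}(1)}(z)=\mathcal{O}(1)\cdot\prod^{\leftarrow}_{a/b}\mathbf{B}_{a/b}(z)$. Under the Chern character the tautological line bundle becomes $e^{c_1(\mathcal{O}(1))}$, contributing $1+c_1(\mathcal{O}(1))+O(\hbar^2)$. By Theorem~\ref{stable-envelope-commutative}, cup product by $c_1(\mathcal{O}(1))$ is intertwined via $\text{Stab}_{\mathfrak{C}}$ with the classical piece $Q_{cl}=\text{Cubic}+\text{Quadratic}$, so this factor already accounts for the entire classical part of $\mathcal{O}(1)*_z$. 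The remaining task is to show that the wall product degenerates precisely to the Purely Quantum term $(t_1+t_2)\sum_{n>0}\frac{nz^n}{1-z^n}\beta_{-n}\beta_n$.

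For the wall product, I would verify that each $\mathbf{B}_{a/b}(z)-1$ is $O(\hbar)$, so at leading order the normal-ordered exponentials combine additively modulo higher-order commutators, giving
\begin{equation*}
\prod^{\leftarrow}_{a/b}\mathbf{B}_{a/b}(z)-1=-\sum_{\substack{-1\leq a/b<0\\ b\leq n}}\sum_{k\geq1}\frac{n_k}{1-z^{-kb}p^{ka}}P_{kb,ka}P_{-kb,-ka}+O(\hbar^2).
\end{equation*}
Here $n_k\sim -\frac{t_1+t_2}{\hbar k^2 t_1t_2}$ in the $\hbar\to 0$ limit, so I need to extract the leading $\hbar^2$ contribution from $P_{kb,ka}P_{-kb,-ka}$. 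The shuffle formula~\eqref{formula-for-group-generators} together with the degeneration procedure in the preceding lemma (which yields $P^a_{k,0}=k^2 t_1 t_2\,\alpha_k$) is the intended tool. The expected outcome is an asymptotic $P_{kb,ka}P_{-kb,-ka}\sim \hbar^2\cdot c(a,b,k)\cdot\beta_{kb}\beta_{-kb}$ with an explicit rational coefficient $c(a,b,k)$, mapping onto $\beta_{-n}\beta_n$ with $n=kb$.

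The main obstacle is precisely the analysis of the shuffle realisation of $P_{kb,ka}$ for $a\neq 0$. As the introduction cautions, these generators are ``much more degenerate'' than $P_{k,0}$, and a direct application of the slope-zero formula does not suffice: one must isolate the leading-order contribution of the symmetric rational function in~\eqref{formula-for-group-generators} as $\hbar\to 0$, identifying which monomials in $\omega$ survive and at what order in $\hbar$ they contribute, and then sum over all admissible $(a,b)$ with fixed $n=kb$ and $\gcd(a,b)=1$. The expected collapse is that after combining the geometric factor $\frac{1}{1-z^{-kb}p^{ka}}\to -\frac{z^{kb}}{1-z^{kb}}$ (which is $a$-independent at leading order) with the asymptotic of $P_{kb,ka}P_{-kb,-ka}$ and summing over $a$, the result equals $-\hbar\kappa\tau\cdot(t_1+t_2)\frac{n z^n}{1-z^n}\beta_{-n}\beta_n$ for each $n=kb$. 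Summing over $n>0$ reproduces the Purely Quantum term, and combining with the classical contribution from $\mathcal{O}(1)$ yields the Dubrovin connection.
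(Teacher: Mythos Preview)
Your overall strategy matches the paper's: separate the tautological line bundle factor (giving the classical cup product $Q_{cl}$) from the wall product (giving the purely quantum term), and reduce the latter to controlling the $\hbar\to0$ asymptotics of $n_k\,P_{kb,ka}P_{-kb,-ka}$. You are also right that the crux is the shuffle analysis of $P_{kb,ka}$ for $a\neq0$.

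However, the proposal does not actually resolve that crux; you only state the \emph{expected} asymptotic $P_{kb,ka}P_{-kb,-ka}\sim\hbar^2 c(a,b,k)\beta_{kb}\beta_{-kb}$ and the \emph{expected} collapse over $a$. The paper's proof supplies exactly the missing mechanism: by computing the lowest-order term of the shuffle element in the fixed-point matrix coefficients one shows that the degeneration of $P_{kb,ka}$ is, up to a scalar depending only on $k$, \emph{equal} to the degeneration of the slope-zero generator $P_{kb,0}$. Concretely the paper proves
\[
P^{a}_{\pm kb,\pm ka}=\tfrac{\omega_a(\pm kb)}{k}\,P^{a}_{\pm kb,0},\qquad
P^{a}_{-kb,-ka}P^{a}_{kb,ka}=k^{2}t_1t_2\,\alpha_{-kb}\alpha_{kb},
\]
so that each wall contributes $(t_1+t_2)\sum_{k\geq1}\frac{1}{1-z^{-kb}}\alpha_{-kb}\alpha_{kb}$, and the sum over $a/b\in[-1,0)$ then reorganises as $\sum_{m\geq1}\frac{m}{1-z^{-m}}\alpha_{-m}\alpha_{m}$ by counting, for each $m$, the walls with $m=kb$. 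Without this slope-independence statement your ``sum over $a$'' step has no content, because you have not determined $c(a,b,k)$.

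There is also a normalisation slip in your setup. In the degeneration the modular parameter should scale as $p=e^{c\hbar}$ for a fixed constant (cf.\ the paper's convention $q_i=e^{2\pi t_i\tau}$, $p=e^{2\pi i\tau}$), not as $p=e^{\hbar\kappa\tau}$ with an independent $\tau$. With your choice, $\log p=\hbar\kappa\tau$ while $\mathbf{M}_{\mathcal O(1)}(z)-1$ is $O(\hbar)$ with no $\tau$, so the limit $\frac{\mathbf{M}-1}{\log p}$ acquires a spurious $1/\tau$. The parameter $\kappa=-r/2$ in \cite{OS22} is a Kahler shift in $\mathbf{B}_w$, not the exponent of $p$.
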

\begin{proof}

From the previous discussion, the proof can be reduced to compute the matrix coefficients $\langle\bm{\lambda}|R|\bm{\mu}\rangle$.

Also note that the action of $U_{q,t}(\hat{\hat{\mf{gl}}}_1)$ over $K_{T}(M(r))$ can be deduced via the isomorphism $K_{T}(M(r))\cong K_{T}(M(1))^{\otimes r}$. Thus the proof is reduced to $M(1)$, i.e. the Hilbert scheme of points $\text{Hilb}_{n}(\mbb{C}^2)$.

Now we take value in the partition $\bm{\lambda}\backslash\bm{\mu}$ and take the limit $q=e^{\hbar(t_1+t_2)}$ as $\hbar\rightarrow0$, in this case we choose suitable normalization factor $U_{ak}(q,t)$ such that as $\hbar\rightarrow0$, $U_{ak}(q,t)$:
\begin{equation}
\begin{aligned}
&\langle\bm{\lambda}|P^{a}_{ak,bk}|\bm{\mu}\rangle:=\lim_{\hbar\rightarrow0}\frac{\langle\bm{\lambda}|P_{ak,bk}(\hbar)|\bm{\mu}\rangle-1}{\hbar^{\text{leading order}}}\\
=&\hbar^{1+ak(\#\Delta(\lambda))-\text{leading order}}\text{Sym}[\frac{k}{\prod_{i=1}^{ak-1}(t_1+t_2+\chi_{i+1}-\chi_{i})}\prod_{i<j}\zeta_{a}(\chi_{i}-\chi_{j})]\prod_{\blacksquare\in\bm{\lambda}\backslash\bm{\mu}}\frac{\prod_{\square\text{ o.c. of }\lambda}(\chi_{\square}-\chi_{\blacksquare})}{\prod_{\square\text{ i.c. of }\lambda}(\chi_{\square}-\chi_{\blacksquare})}\\
=&\text{Sym}[\frac{k}{\prod_{i=1}^{ak-1}(t_1+t_2+\chi_{i+1}-\chi_{i})}\prod_{i<j}\zeta_{a}(\chi_{i}-\chi_{j})]\prod_{\blacksquare\in\bm{\lambda}\backslash\bm{\mu}}\frac{\prod_{\square\text{ o.c. of }\lambda}(\chi_{\square}-\chi_{\blacksquare})}{\prod_{\square\text{ i.c. of }\lambda}(\chi_{\square}-\chi_{\blacksquare})}
\end{aligned}
\end{equation}

i.e.
\begin{align}\label{degeneration-of-generators}
P^{a}_{\pm ak,\pm bk}=\frac{\omega_a(\pm ak)}{k}P^{a}_{\pm ak,0}
\end{align}

Also note that:
\begin{equation}
\begin{aligned}
&\langle\bm{\lambda}|P^{a}_{-ak,0}P^{a}_{ak,0}|\bm{\mu}\rangle:=\lim_{\hbar\rightarrow0}\frac{\langle\bm{\lambda}|P_{-ak,0}P_{ak,0}(\hbar)|\bm{\mu}\rangle-1}{\hbar}\\
=&-\hbar^2a^3k^3\langle\bm{\lambda}|t_1t_2p_{-ak}\frac{\partial}{\partial p_{-ak}}|\bm{\mu}\rangle=\hbar^2a^2k^2\langle\bm{\lambda}|t_1t_2\alpha_{-ak}\alpha_{ak}|\bm{\mu}\rangle
\end{aligned}
\end{equation}

This means that $\omega_{a}(-ak)\omega_{a}(ak)=a^2k^2$. So in this way we find that:
\begin{align}
P^{a}_{-ak,-bk}P_{ak,bk}^a=a^{-2}P^{a}_{-ak,0}P^{a}_{ak,0}=k^2\alpha_{-ak}\alpha_{ak}
\end{align}

Thus we can compute that:
\begin{equation}
\begin{aligned}
&\langle\bm{\mu}_2|n_{k}^a\alpha^{a}_{-ak,-bk}\alpha^{a}_{ak,bk}|\bm{\mu}_1\rangle:=\lim_{\hbar\rightarrow0}\frac{\langle\bm{\mu}_2|n_{k}\alpha_{-ak,-bk}\alpha_{ak,bk}|\bm{\mu}_1\rangle-1}{\hbar}\\
=&\lim_{\hbar\rightarrow0}\frac{1}{\hbar}(\sum_{\bm{\lambda}}\frac{(q_1^{k/2}-q_1^{-k/2})(q_2^{k/2}-q_2^{-k/2})(q^{k/2}-q^{-k/2})}{k(q_1^k-1)^2(1-q_2^k)^2}\langle\mu_2|P_{-ak,-bk}|\lambda\rangle\langle\lambda|P_{ak,bk}|\mu_1\rangle-1)\\
=&\lim_{\hbar\rightarrow0}\frac{1}{\hbar}\sum_{\lambda}\frac{k^3t_1t_2(t_1+t_2)}{k^5t_1^2t_2^2\hbar}\hbar^{2+ak(\#\Delta(\lambda)-\#\Delta(\mu_2)))}\langle\mu_2|P_{-ak,-bk}^a|\lambda\rangle\langle\lambda|P_{ak,bk}|\mu_1\rangle\\
=&\sum_{\lambda}\frac{(t_1+t_2)}{k^2t_1t_2}\langle\mu_2|P_{-ak,-bk}^a|\lambda\rangle\langle\lambda|P_{ak,bk}^a|\mu_1\rangle=\frac{(t_1+t_2)}{k^2t_1t_2}\langle\mu_2|P_{-ak,-bk}^aP_{ak,bk}^a|\mu_1\rangle\\
=&(t_1+t_2)\langle\mu_2|\alpha_{-ka}\alpha_{ka}|\mu_1\rangle
\end{aligned}
\end{equation}

Back to the quantum difference operator $\mbf{M}_{\mc{O}(1)}(z)$, one can find that the corresponding degeneration limit should be written as:
\begin{equation}
\begin{aligned}
&\mc{O}(1)_{cl}+(t_1+t_2)\sum_{a/b\in\text{Wall}}\sum_{k=1}^{\infty}\frac{1}{1-z^{-kb}}\alpha_{-kb}\alpha_{kb}\\
=&\mc{O}(1)_{cl}+(t_1+t_2)\sum_{k=1}^{\infty}\frac{k}{1-z^{-k}}\alpha_{-k}\alpha_{k}
\end{aligned}
\end{equation}

Thus we have finished the proof.
\end{proof}

\subsection{Monodromy and connection matrix from the degeneration}
We first write down the fundamental solution of the algebraic quantum difference equation around $z=0$ and $z=\infty$. For the detailed analysis of the solution one can read \cite{S21}.

We know that $\mbf{M}_{\mc{O}(1)}(0)=\mc{O}(1)$ is diagonal in the basis of fixed point basis. Let $P$ be the matrix with columns given by eigenvectors $P_{\lambda}$. We denote $E_{0}$ the digaonal matrix of eigenvalues, and we have that:
\begin{align}
\mbf{M}_{\mc{O}(1)}(0)P=PE_0
\end{align}
There exists a unique fundamental solution at $z=0$ written as:
\begin{align}
\Psi_{0}(z)=P\Psi^{reg}_{0}(z)\exp(\frac{\ln(E_0)\ln(z)}{\ln(q)}),\qquad\Psi^{reg}_{0}(z)=1+\sum_{k=1}^{\infty}\Psi_{0,k}^{reg}z^k
\end{align}
And $\Psi^{reg}_{0}(z)$ satisfies the following difference equation:
\begin{align}
\Psi^{reg}_{0}(pz)E_0=\mbf{M}_{\mc{O}(1)}^*(z)\Psi^{reg}_{0}(z),\qquad\mbf{M}_{\mc{O}(1)}^*(z)=P^{-1}\mbf{M}_{\mc{O}(1)}(z)P
\end{align}

Similarly for the fundamental solution near $z=\infty$, $\mbf{M}_{\mc{O}(1)}(\infty)$ is written as
\begin{align}
\mbf{M}_{\mc{O}(1)}(\infty)=\mc{O}(1)\prod^{\rightarrow}_{m\in\text{Walls}\cap[-1,0)}\mbf{m}((1\otimes S_{m})(R_{m}^{-})^{-1})
\end{align}
It can be proved that $\mbf{M}_{\mc{O}(1)}(\infty)$ is diagonlisable over $\mbb{Q}(q_1,q_2)$ with eigenvalues given by rational functions in $q_1,q_2$. Now let $H^*$ be the matrix with columns given by the eigenvectors of $\mbf{M}_{\mc{O}(1)}(\infty)$. Let $E_{\infty}$ be the diagonal matrix of eigenvalues so that:
\begin{align}\label{opposite-eigenvectors}
\mbf{M}_{\mc{O}(1)}(\infty)H^*=H^*E_{\infty}
\end{align}
The unique fundamental solution at $z=\infty$ can be written as:
\begin{align}
\Psi_{\infty}(z)=H^*\Psi^{reg}_{\infty}(z)\exp(\frac{\ln(E_{\infty})\ln(z)}{\ln(q)}),\qquad\Psi^{reg}_{\infty}(z)=1+\sum_{k=1}^{\infty}\Psi^{reg}_{\infty,k}z^{-k}
\end{align}
And $\Psi^{reg}_{\infty}$ solves the equation
\begin{align}
\Psi^{reg}_{\infty}(pz)E_{\infty}=\mbf{M}_{\mc{O}(1)}^*(z)\Psi^{reg}_{\infty}(z),\qquad \mbf{M}_{\mc{O}(1)}^*(z)=(H^*)^{-1}\mbf{M}_{\mc{O}(1)}(z)H^*
\end{align}

The following identity is really useful and was proved in \cite{S21}:
\begin{lem}
Let $T=P^{-1}H^*$. Then we have the following identity:
\begin{align}
(\prod^{\rightarrow}_{w\in[0,1)}\mbf{B}_{w})E_0=TE_{\infty}T^{-1}
\end{align}
\end{lem}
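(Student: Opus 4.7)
The plan is first to rewrite the right-hand side in a more tractable form and then to obtain the identity from the explicit factorisation of $\mbf{M}_{\mc{O}(1)}(\infty)$ together with the slope-shift symmetry of the quantum toroidal algebra. First I would use the defining relation $\mbf{M}_{\mc{O}(1)}(\infty) H^* = H^* E_\infty$ from (\ref{opposite-eigenvectors}) to rewrite
\begin{equation*}
T E_\infty T^{-1} \;=\; P^{-1} H^* E_\infty (H^*)^{-1} P \;=\; P^{-1} \mbf{M}_{\mc{O}(1)}(\infty) P.
\end{equation*}
Combined with $P E_0 P^{-1} = \mc{O}(1) = \mbf{M}_{\mc{O}(1)}(0)$, the identity becomes equivalent to the assertion
\begin{equation*}
P \Bigl(\prod^{\rightarrow}_{w \in [0,1)} \mbf{B}_w\Bigr) P^{-1} \cdot \mc{O}(1) \;=\; \mbf{M}_{\mc{O}(1)}(\infty),
\end{equation*}
which is an identity taking place inside $U_{q,t}(\hat{\hat{\mf{gl}}}_1)$-endomorphisms (up to conjugation by $P$) rather than involving the eigenvalue matrices explicitly.

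Next I would insert the explicit expression
\begin{equation*}
\mbf{M}_{\mc{O}(1)}(\infty) \;=\; \mc{O}(1) \prod^{\leftarrow}_{w \in [-1,0)} \mbf{m}\bigl((1\otimes S_w)(R_w^-)^{-1}\bigr)
\end{equation*}
and pull the factor of $\mc{O}(1)$ to the right through the product. The key algebraic ingredient is the slope-shift symmetry built into the slope factorisation $U_{q,t}(\hat{\hat{\mf{gl}}}_1) = \bigotimes^{\rightarrow}_{a/b} \mc{B}_{a/b}$: conjugation by $\mc{O}(1)$ implements the $SL(2,\mbb{Z})$-automorphism that sends $\mc{B}_{a/b}$ to $\mc{B}_{a/b+1}$, mapping the wall operator at slope $w \in [-1,0)$ to a wall operator at slope $w+1 \in [0,1)$ (up to the appropriate matching of the coproducts $\Delta_{a/b}$ and $\Delta_{(a/b)}$ already recorded earlier). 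Absorbing this shift turns the $\prod^{\leftarrow}_{w \in [-1,0)}$ into $\prod^{\rightarrow}_{w \in [0,1)}$ after reordering, and matching the resulting factors with the operators $\mbf{B}_w$ of the lemma yields the claim.

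The main obstacle is the careful combinatorics of the slope-shift step: both $\mbf{B}_w$ and $\mbf{m}((1\otimes S_w)(R_w^-)^{-1})$ are universal $R$-matrix exponentials in the generators $P_{kb,ka}$, and verifying that the $\mc{O}(1)$-twist correctly sends one family into the other — including the spectral-parameter dependence, the role of the antipode $S_w$, and the reversal of the ordering convention — requires using the shuffle-algebra formula (\ref{formula-for-group-generators}) together with the slope-factorised coproduct \ref{coproduct-quantum-toroidal-drinfeld}. A cleaner alternative, which I expect is the route taken in \cite{S21}, is to argue globally with the fundamental solutions: both $\Psi_0(z)$ and $\Psi_\infty(z)$ satisfy $\Psi(pz) = \mbf{M}(z) \Psi(z)$, so $C(z) := \Psi_\infty(z)^{-1}\Psi_0(z)$ is $p$-periodic; expanding $\Psi_0(z) = P\Psi_0^{reg}(z) z^{\log_q E_0}$ at $z \to \infty$ and matching with $\Psi_\infty(z) = H^*\Psi_\infty^{reg}(z) z^{\log_q E_\infty}$ forces $\Psi_0^{reg}(\infty)$ to equal $T$ times a product of wall operators, and substituting this asymptotic back into the difference equation at $z=\infty$ produces the desired formula $(\prod^{\rightarrow}_{w\in[0,1)} \mbf{B}_w) E_0 = T E_\infty T^{-1}$ directly.
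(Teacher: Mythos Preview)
Your first display, rewriting $TE_\infty T^{-1}=P^{-1}H^*E_\infty(H^*)^{-1}P=P^{-1}\mbf{M}_{\mc{O}(1)}(\infty)P$ via \eqref{opposite-eigenvectors} and $T=P^{-1}H^*$, is \emph{the entire content of the paper's proof}. The paper literally writes ``Using the formula \eqref{opposite-eigenvectors} and the definition of $T$'' and stops; it treats the residual identification $(\prod^{\rightarrow}_{w\in[0,1)}\mbf{B}_w)E_0=P^{-1}\mbf{M}_{\mc{O}(1)}(\infty)P$ as already established in \cite{S21}, not as something requiring further argument here.

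Everything you write after that first display --- the reduction to $P(\prod^{\rightarrow}_{w\in[0,1)}\mbf{B}_w)P^{-1}\cdot\mc{O}(1)=\mbf{M}_{\mc{O}(1)}(\infty)$, the slope-shift via $\mc{O}(1)$-conjugation sending $\mc{B}_{a/b}\to\mc{B}_{a/b+1}$, and the alternative asymptotic analysis of $\Psi_0,\Psi_\infty$ --- is you supplying the justification that the paper outsources. Your slope-shift reasoning is essentially correct in spirit (conjugation by the tautological line bundle does implement the integer slope translation on the wall subalgebras, which is how one passes from walls in $[-1,0)$ to walls in $[0,1)$), and your second alternative via the $p$-periodicity of the connection matrix is indeed the route taken in \cite{S21}. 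But for the purposes of matching the paper, you have over-engineered a lemma that the author regards as a two-line bookkeeping identity. The ``main obstacle'' you flag --- tracking the antipode, the ordering reversal, and the shuffle formula \eqref{formula-for-group-generators} --- is real work, but it is work done elsewhere, not in this lemma.
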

\begin{proof}
Using the formula \ref{opposite-eigenvectors} and the definition of $T$.
\end{proof}

Now we define the connection matrix as:
\begin{align}\label{connection-matrix}
\textbf{Mon}(z):=\Psi_{0}(z)^{-1}\Psi_{\infty}(z)
\end{align}

We also define the regular part of the connection matrix by:
\begin{align}\label{regular-connection-matrix}
\textbf{Mon}^{reg}(z):=\Psi_{0}^{reg}(z)^{-1}\Psi_{\infty}^{reg}(z)
\end{align}

It is obvious that $\textbf{Mon}(z)$ is $p$-period, i.e. $\textbf{Mon}(pz)=\textbf{Mon}(z)$, and by construction it is written as the following:
\begin{align}
\textbf{Mon}(z,q_1,q_2,p)=\exp(-\frac{\ln(\mbf{E}_0)\ln(z)}{\ln(p)})\textbf{Mon}^{reg}(z,q_1,q_2,p)\exp(\frac{\ln(\mbf{E}_{\infty})\ln(z)}{\ln(p)})
\end{align}
such that $\textbf{Mon}^{reg}(z,q_1,q_2)$ can be written as the combination of the theta-functions of the modular parametre $p$. 

One aspect is on the connection matrix. The connection matrix of the quantum difference equation over the instanton moduli space $M(n,r)$ can also be used to compute the connection matrix of the corresponding quantum differential equation. 

Using the connection matrix of the quantum difference equation, we define the connection matrix for the Dubrovin connection by:
\begin{align}\label{defn-of-trans-matrix}
\text{Trans}(s)=\lim_{\tau\rightarrow0}\textbf{Mon}(e^{2\pi is},e^{2\pi t_1\tau},e^{2\pi t_2\tau},e^{2\pi i\tau})
\end{align}

The following theorem can also be derived from \cite{S21}\cite{Z24}:
\begin{prop}\label{transport-matrix}
The connection matrix $\text{Trans}(s)$ of the Dubrovin connection from $z=0$ to $z=\infty$ along a line $\gamma_s$ intersecting $|z|=1$ at a non-singular point $z=e^{2\pi is}$ equals:
\begin{align}
\text{Trans}(s)=
\begin{cases}
\prod_{w\in(0,s)}^{\leftarrow}(\mbf{B}_{w}^*)^{-1}\cdot T,\qquad s\geq0\\
\prod_{w\in(s,0)}^{\rightarrow}\mbf{B}_{w}^*\cdot T,\qquad s<0
\end{cases}
\end{align}
\end{prop}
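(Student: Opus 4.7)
The plan is to derive the formula as the $\tau\to 0$ degeneration of the algebraic connection matrix $\textbf{Mon}(e^{2\pi is},e^{2\pi t_1\tau},e^{2\pi t_2\tau},e^{2\pi i\tau})$ per the definition (\ref{defn-of-trans-matrix}), using Theorem \ref{degeneration-thm-instanton} to translate between the difference and differential settings.

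First, substitute the explicit fundamental solutions $\Psi_0$, $\Psi_\infty$ into $\textbf{Mon}(z)=\Psi_0(z)^{-1}\Psi_\infty(z)$, and factor out $T=P^{-1}H^*$ to obtain
$$\textbf{Mon}(z)=\exp\!\Bigl(-\tfrac{\log E_0\,\log z}{\log p}\Bigr)\,\Psi_0^{reg}(z)^{-1}\,T\,\Psi_\infty^{reg}(z)\,\exp\!\Bigl(\tfrac{\log E_\infty\,\log z}{\log p}\Bigr).$$
Under the substitution $z=e^{2\pi is}$, $q_j=e^{2\pi t_j\tau}$, $p=e^{2\pi i\tau}$, the ratios $\log E_{0,\infty}/\log p$ tend to finite limits as $\tau\to 0$: each $K$-theoretic eigenvalue of $\mc{O}(1)$ on a fixed point is a monomial in $q_1,q_2$ whose exponents recover, in the limit, the cohomological eigenvalues $c(\bm\lambda)$ and $-c(\bm\lambda^*)$ of the classical part of $\mc{O}(1)*_z$ at $z=0$ and $z=\infty$. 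By Theorem \ref{degeneration-thm-instanton}, $\Psi_0^{reg}$ and $\Psi_\infty^{reg}$ converge to the regular parts $Y^{\bm\lambda}(z)$ and $H_\infty(z)$ of the Dubrovin connection's fundamental solutions, and $P,H^*$ converge to the generalized-Jack and dual-Jack matrices, so that $T$ is the cohomological connection matrix between them. In particular the $s=0^+$ limit gives $\text{Trans}(0^+)=T$, matching the right-hand side with an empty product.

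For general $s\in(0,1)\setminus\text{Walls}$, compare the slope-$0$ and slope-$s$ factorizations $\mbf{M}_{\mc{O}(1)}^{MO,s}(z)=\mc{O}(1)\prod^{\leftarrow}_{w\in[-1+s,s)}\mbf{B}_w(z)$ of (\ref{geometric-doperator}): they differ by transporting the wall factors with $w\in(0,s)$ from the right end of the ordered product to the left. In the $\tau\to 0$ limit, each such transport contributes a factor $(\mbf{B}_w^*)^{-1}$ to the left of $T$, assembling into $\prod^{\leftarrow}_{w\in(0,s)}(\mbf{B}_w^*)^{-1}\cdot T$; the symmetric argument for $s<0$ moves the walls in $(s,0)$ in the opposite direction, producing $\prod^{\rightarrow}_{w\in(s,0)}\mbf{B}_w^*\cdot T$. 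The asymmetry between the two cases reflects the convention that the base wall at $0$ lies in $[0,1)$, and the identity $\bigl(\prod^{\rightarrow}_{w\in[0,1)}\mbf{B}_w\bigr)E_0=TE_\infty T^{-1}$ guarantees that the two cases agree modulo one full $p$-period monodromy.

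The main obstacle is to verify that each individual wall factor $\mbf{B}_w(z)$, evaluated in the $\tau\to 0$ limit with $z$ near $e^{2\pi iw}$, yields precisely the cohomological wall operator $\mbf{B}_w^*$ appearing in the Dubrovin connection's monodromy, and that the branches of $\log z$ and $\log p$ are tracked consistently across each wall circle $|z|=1$. This amounts to a wall-by-wall sharpening of Theorem \ref{degeneration-thm-instanton}; the asymptotic analysis in \cite{S21,Z24} transfers directly, using the slope factorization $U_{q,t}(\hat{\hat{\mf{gl}}}_1)=\bigotimes^{\rightarrow}_{a/b\in\mbb{Q}\sqcup\{\infty\}}\mc{B}_{a/b}$ in place of the affine type $A$ slope decomposition, and the fact that each $\mc{B}_{a/b}\cong U_q(\hat{\mf{gl}}_1)$ degenerates to the corresponding cohomological wall Lie subalgebra $\mf{g}_w^{MO}$.
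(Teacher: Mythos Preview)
The paper offers no proof of this proposition; it simply states that it ``can also be derived from \cite{S21}\cite{Z24}''. Your proposal likewise terminates in a deferral to those same references, so at the level of what is actually established you and the paper are aligned.

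Your explanatory sketch goes beyond what the paper supplies, and its first paragraph (unpacking $\textbf{Mon}(z)$ and obtaining $\text{Trans}(0^+)=T$) is sound. The middle paragraph, however, is conceptually off: you invoke the slope-$s$ factorization $\mbf{M}_{\mc{O}(1)}^{MO,s}(z)$ of (\ref{geometric-doperator}) and compare it with the slope-$0$ one, treating the slope parameter as if it were the angular coordinate $s$ in $z=e^{2\pi is}$. These are \emph{a priori} different objects---the slope labels a choice of ordered product in the difference operator, while the angular $s$ labels a path in the $z$-plane---and rearranging factors of a conjugate difference operator does not by itself compute the connection matrix of the \emph{fixed} (slope-$0$) equation along varying paths. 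The actual mechanism in \cite{S21} is analytic: $\textbf{Mon}(z,p)$ is $p$-periodic and built from theta functions, and as $\tau\to 0$ these degenerate to step functions in $s$ whose jumps occur exactly where the poles of the $\mbf{B}_w(z)$ factors (at $z^{-kb}p^{ka}q^{-krb/2}=1$) collapse onto the roots of unity on $|z|=1$; each jump contributes $\mbf{B}_w^*$. Your final paragraph gestures toward this, but the phrase ``wall-by-wall sharpening of Theorem \ref{degeneration-thm-instanton}'' understates what is needed: that theorem controls the \emph{coefficients} of the difference operator, whereas here one needs the asymptotics of its \emph{solutions}, which is the separate analytic input supplied by \cite{S21}.
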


Using the connection matrix, one can obtain the expression for the monodromy representations via $\text{Trans}(s')^{-1}\text{Trans}(s)$. The Dubrovin connection over the instanton moduli space $M(n,r)$ has regular singularities on the following isolated points:
\begin{align}
\text{Sing}:=\{0,\infty,\exp(\frac{2\pi ia}{b})\}_{1\leq a\leq b\leq n)}
\end{align}

Using the deduction as in \cite{S21} and \cite{Z24}, we have the following description for the monodromy representation:
\begin{thm}\label{monodromy-representation}
For the monodromy representation of the Dubrovin connection of the instanton moduli space:
\begin{equation}
\begin{aligned}
&\Phi:\pi_1(\mbb{P}^1\backslash\text{Sing},0^{+})\rightarrow\text{Aut}(H^*_{T}(M(n,r)))\\
&\gamma_{w}\mapsto\mbf{B}_{w}^*,\qquad\gamma_0\mapsto E_{0},\qquad\gamma_{\infty}\mapsto TE_{\infty}T^{-1}
\end{aligned}
\end{equation}
The monodromy group is generated by $\mbf{m}((1\otimes S_{\frac{a}{b}})(R_{\frac{a}{b}}^{-})^{-1})$. Here $R_{\frac{a}{b}}^{-}:=(R_{\frac{a}{b}})_{21}$ is the universal $R$-matrix for the slope subalgebra $\mc{B}_{\frac{a}{b}}\cong U_{q}(\hat{\mf{gl}}_1)$ with $q=\exp(2\pi i(t_1+t_2))$ with respect to the opposite coproduct $\Delta_{\frac{a}{b}}^{op}$.
\end{thm}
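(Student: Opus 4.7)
The plan is to combine the degeneration Theorem \ref{degeneration-thm-instanton} with the description of the connection matrix in Proposition \ref{transport-matrix}, and then identify each wall factor $\mbf{B}_w^*$ with the claimed element of the slope subalgebra $\mc{B}_{a/b}$ using the ABRR definition of $\mbf{B}_w(\lambda)$. First I would set up the fundamental solutions $\Psi_0(z)$ and $\Psi_\infty(z)$ of the algebraic quantum difference equation together with the connection matrix $\textbf{Mon}(z)$ defined in \ref{connection-matrix}. Passing to the degeneration limit $q_i=e^{2\pi i t_i\tau}$, $p=e^{2\pi i\tau}$, $\tau\to 0$ turns $\textbf{Mon}$ into the transport matrix $\text{Trans}(s)$ for the Dubrovin connection by \ref{defn-of-trans-matrix}. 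By Theorem \ref{degeneration-thm-instanton}, this $\text{Trans}(s)$ is the genuine transport matrix of the Dubrovin connection, so the monodromy representation $\Phi$ can be read off from the products in Proposition \ref{transport-matrix}.

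Next I would compute, for each wall $w=a/b$ separating the segment $(0,1)$ into pieces, the ratio $\text{Trans}(s'{-})^{-1}\text{Trans}(s{+})$ where $s$ and $s'$ bracket $w$. Proposition \ref{transport-matrix} gives that this ratio telescopes to a single factor $\mbf{B}_{w}^*$ (on the $\mc{B}_{a/b}$-side of the identity from the preceding lemma). The same argument, applied at the endpoints of the contour, yields $\gamma_0\mapsto E_0$ from the $z=0$ fundamental solution and $\gamma_\infty\mapsto TE_\infty T^{-1}$ from the $z=\infty$ fundamental solution, using $T=P^{-1}H^*$ and $\mbf{M}_{\mc{O}(1)}(\infty)H^*=H^*E_\infty$.

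The remaining point is to identify $\mbf{B}_w^*$, in the trigonometric limit, with $\mbf{m}((1\otimes S_{a/b})(R_{a/b}^{-})^{-1})$. By construction (see \ref{geometric-doperator} transported into the algebraic setting of \ref{quantum-difference-operator}), $\mbf{B}_w(\lambda)=\mbf{m}((1\otimes S_w)(J_w^{-}(\lambda)^{-1}))|_{\lambda\mapsto\lambda+\kappa}$, where $J_w^{-}$ is the shift operator of the slope subalgebra $\mc{B}_{a/b}\cong U_q(\hat{\mf{gl}}_1)$ satisfying the ABRR equation. In the degeneration $q_i\mapsto e^{2\pi i t_i\tau}$ with $\tau\to 0$, the Cartan shift in the ABRR equation becomes trivial and $J_w^{-}$ collapses to the universal $R$-matrix $R_{a/b}^{-}=(R_{a/b})_{21}$ of $\mc{B}_{a/b}$, with $q=\exp(2\pi i(t_1+t_2))$. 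Using Proposition \ref{intertwine-infty-coproduct} together with the coproduct identification of Proposition on $\Delta_{(a/b)}=\Delta_{a/b}$ restricted to $\mc{B}_{a/b}$, the twist $S_w$ descends to the antipode $S_{a/b}$ with respect to $\Delta_{a/b}^{op}$, giving the stated generators.

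The main obstacle is to execute the limit $J_w^{-}(\lambda+\kappa)\to R_{a/b}^{-}$ rigorously, i.e.\ to show that the spectral-parameter dependence of $\mbf{B}_w(\lambda)$ drops out in the trigonometric limit, leaving the universal, $\lambda$-independent element $R_{a/b}^{-}$. This amounts to analysing the ABRR recursion term by term via the shuffle-algebra formula \ref{formula-for-group-generators} for the slope generators $P_{kb,ka}$, and then applying the degeneration computation of Theorem \ref{degeneration-thm-instanton} to each term in $\exp\bigl(-\sum_k n_k(1-z^{-kb}p^{ka})^{-1}P_{kb,ka}P_{-kb,-ka}\bigr)$ so that the infinite product collapses to the universal $R$-matrix of $\mc{B}_{a/b}$. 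Once this identification is in place, the statement about generators follows immediately from the surjectivity of $\mbf{m}\circ(1\otimes S_{a/b})$ onto $\mc{B}_{a/b}$ in the sense used throughout \cite{Z24}\cite{Z24-2}.
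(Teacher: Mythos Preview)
Your first three steps match the paper's proof closely: the paper also uses the fundamental solutions $\Psi_0,\Psi_\infty$, reads off $\gamma_0\mapsto E_0$ and $\gamma_\infty\mapsto TE_\infty T^{-1}$ from the asymptotics, and obtains $\gamma_w\mapsto\mbf{B}_w^*$ as the ratio $\text{Trans}(s)\text{Trans}(s')^{-1}$ for $s<w<s'$ via Proposition~\ref{transport-matrix}. So the monodromy part of your plan is fine and essentially identical.

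Where you diverge is your fourth step. You import the \emph{geometric} definition $\mbf{B}_w(\lambda)=\mbf{m}((1\otimes S_w)(J_w^{-}(\lambda)^{-1}))$ and then set yourself the task of proving $J_w^{-}\to R_{a/b}^{-}$ in the trigonometric limit via an ABRR analysis. This is an unnecessary detour for the present theorem, which is about the \emph{algebraic} quantum difference equation. There the wall operator is already given in closed Heisenberg form \eqref{quantum-difference-operator}, and the identification
\[
\mbf{m}\bigl((1\otimes S_{a/b})(R_{a/b}^{-})^{-1}\bigr)=\exp\Bigl(-\sum_{k\geq1}n_k\,P_{bk,ak}P_{-bk,-ak}\Bigr)
\]
is a one-line computation in $\mc{B}_{a/b}\cong U_q(\hat{\mf{gl}}_1)$: write the universal $R$-matrix of the $q$-Heisenberg algebra, apply the antipode $S_{a/b}(P_{-bk,-ak})=-P_{-bk,-ak}$ on the second factor, invert, and multiply. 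This is exactly what the paper records in \eqref{mono-element} and in the formula for $\mbf{M}_{\mc{O}(1)}(\infty)$ just before \eqref{opposite-eigenvectors}. No ABRR recursion, no shuffle analysis, and no ``spectral parameter dropping out'' is needed here; the $z$-dependence is already absent in the limit defining $\text{Trans}(s)$ because Proposition~\ref{transport-matrix} has absorbed it.

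So your proof would go through, but your ``main obstacle'' is self-inflicted: you are proving the algebraic statement by passing through the geometric construction and then limiting back, when the algebraic side already hands you the answer explicitly. The ABRR argument you sketch is the right tool for the \emph{geometric} analogue (Theorem~\ref{geometric-monodromy}), not for this one.
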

\begin{proof}
The proof is more or less similar as in \cite{S21}. For the fundamental solution $\psi_{0}(z)$ around $z=0$, fix a closed curve $\gamma_0$ around $z=0$. Going around $\gamma_0$ gives rise to the following transformation:
\begin{align}
\psi_0(\gamma_0\cdot z)=\psi_0(z)E_0
\end{align}
Then we have that $\gamma_0\mapsto E_0$.

Similarly for the fundamental solution $\psi_{\infty}(z)$ around $z=\infty$, choosing a loop $\gamma_{\infty}$ around $z=\infty$ we have that:
\begin{align}
\psi_{\infty}(\gamma_{\infty}\cdot z)=\psi_{\infty}(z)E_{\infty}
\end{align}
Thus we have that $\gamma_{\infty}\mapsto TE_{\infty}T^{-1}$ since $\text{Trans}(0)=T$.

For the loop $\gamma_{w}$ around other singuarlities, consider $s',s\in\mbb{R}$ such that there is only one singuarlity $w$ such that $s'>w>s$. Then the composition $\text{Trans}(s)\text{Trans}(s')^{-1}$ means that the solution $\psi_{0}(z)$ is analytically continued to the loop around the singuarlity $w$ by the loop $\gamma_{w}$. In this way by the proposition \ref{transport-matrix}, $\text{Trans}(s)\text{Trans}(s')^{-1}=\mbf{B}^{*}_{w}$. Thus in this way we finish the proof.

\end{proof}

In terms of the generators, we have that:
\begin{align}\label{mono-element}
\mbf{m}((1\otimes S_{\frac{a}{b}})(R_{\frac{a}{b}}^{-})^{-1})=\exp(-\sum_{k=1}^{\infty}n_{k}P_{bk,ak}P_{-bk,-ak})
\end{align}

This implies that we can use the fixed point basis matrix to express the monodromy operators explicitly.

\section{\textbf{Degeneration limit for the Okounkov-Smirnov quantum difference equation}}

In this section we prove that the Okounkov-Smirnov geometric quantum difference equation also degenerates to the Dubrovin connection for the instanton moduli space.

\subsection{Degeneration limit of the Okounkov-Smirnov quantum difference equation}
Similar to the case of the algebraic quantum difference equation, the geometric quantum difference equation also has the degeneration limit as the Dubrovin connection:

\begin{thm}\label{main-theorem-1}
The degeneration limit of the Okounkov-Smirnov quantum difference equation is equivalent to the Dubrovin connection for the instanton moduli space.
\end{thm}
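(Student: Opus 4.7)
The plan is to mirror the argument of Theorem \ref{degeneration-thm-instanton} but now at the level of the geometric wall factors $\mbf{B}_w(z)\in U_q^{MO}(\mf{g}_w)$ defined through the ABRR equation. Since $\mbf{M}_{\mc{O}(1)}^{MO,s}(z)=\mc{O}(1)\prod_{w\in[-1+s,s)}^{\leftarrow}\mbf{B}_w(z)$ has the same factorized shape as the algebraic operator \ref{quantum-difference-operator}, what I need is the asymptotic expansion of each factor $\mbf{B}_w(z)$ under $q_1=e^{\hbar t_1}$, $q_2=e^{\hbar t_2}$, $\hbar\to 0$, and to show it coincides with the corresponding factor already analysed in Section 6. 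Summing across walls and adding the classical piece $\mc{O}(1)$ will then produce the Dubrovin connection operator $\mc{O}(1)*_z$.

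First I would solve the ABRR relation
\begin{equation*}
q^{-\Omega}R_w^{-}q_{(1)}^{-\lambda}J_w^{-}(\lambda)=J_w^{-}(\lambda)q^{\Omega}q_{(1)}^{-\lambda}
\end{equation*}
order by order in $\hbar$. Writing $R_w^{-}=1+\hbar\, r_w^{-}+O(\hbar^2)$, with $r_w^{-}\in\mf{n}_w^{MO,-}$ as in Section 3.2, the leading equation is linear and can be inverted explicitly: the solution for $J_w^{-}(\lambda)^{-1}$ becomes a rational function in $z^{\mc{L}_w}$ whose leading coefficient in $\hbar$ is linear in $r_w^{-}$. Applying $(1\otimes S_w)$, the multiplication $\mbf{m}$, and the shift $\lambda\to\lambda+\kappa$ then expresses $\mbf{B}_w(z)=1+\hbar\,b_w(z)+O(\hbar^2)$ with $b_w(z)$ an explicit element of the degenerate wall Lie subalgebra $\mf{g}_w^{MO}$ multiplied by a rational function of $z$ coming from the geometric series in $z^{\mc{L}_w}$. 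In particular $\log\mbf{M}_{\mc{O}(1)}^{MO,s}(z)-\log\mc{O}(1)$ degenerates to $\hbar\sum_{w}b_w(z)+O(\hbar^2)$.

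The next step is to identify each $b_w(z)$ with the corresponding degenerate factor in the algebraic case, which by Theorem \ref{degeneration-thm-instanton} produces $(t_1+t_2)\sum_{k\geq 1}\tfrac{1}{1-z^{-kb}}\alpha_{-kb}\alpha_{kb}$ at the wall $w=a/b$. For this I would use the fact, established in Section 3.2, that both halves of the MO wall subalgebra at Jordan type degenerate to the same MO Lie subalgebra $\mf{g}_w^{MO}\subset\mf{g}_Q^{MO}$ that arises from the cohomological limit of the algebraic slope subalgebra $\mc{B}_{a/b}\cong U_q(\hat{\mf{gl}}_1)$. Concretely, $\mf{g}_w^{MO}$ is a Heisenberg subalgebra of $\mf{g}_Q^{MO}$ with generators given by the Heisenberg modes $\alpha_{\pm kb}$, and the wall $R$-matrix limit $r_w^{-}$ is therefore a sum of products of these Heisenberg modes. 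Substituting this identification into $b_w(z)$ yields exactly the Heisenberg expression produced by the algebraic computation of Theorem \ref{degeneration-thm-instanton}. Summing over $w\in[-1+s,s)$ and collapsing the sum as in the end of the proof of Theorem \ref{degeneration-thm-instanton} reconstructs the purely quantum term $(t_1+t_2)\sum_{k>0}\tfrac{kz^k}{1-z^k}\beta_{-k}\beta_k$ of $\mc{O}(1)*_z$.

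The main obstacle is the identification at Jordan type of the degenerate wall subalgebra $\mf{g}_w^{MO}$ as the Heisenberg Lie algebra spanned by $\alpha_{\pm kb}$, without invoking the isomorphism $U_q^{MO}(\hat{\mf{g}}_Q)\cong U_{q_1,q_2}(\hat{\hat{\mf{gl}}}_1)$ (which is the output, not an input, of the paper). This step has to be extracted from the cohomological form of the MO construction: one must show that the first-order expansion of $R_w^{-}$ in $\hbar$, viewed through the action on $H_T^*(M(r))$, only produces Heisenberg operators of modes a multiple of $b$, with the commutation relation forced by the residue pairing $\langle-,-\rangle$ appearing in the ABRR normalisation. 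Once this Heisenberg identification is in place, the remaining computation is a formal repetition of the calculation in Theorem \ref{degeneration-thm-instanton}, with the algebraic shuffle generators $P_{\pm ak,\pm bk}$ replaced throughout by the geometric wall generators $r_w^{\pm}$ of the MO wall subalgebra.
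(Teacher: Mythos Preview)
Your overall plan---degenerate the ABRR equation to first order in $\hbar$, extract the classical wall $r$-matrix $r_w^{-}$, identify it in Heisenberg terms, then sum over walls and collapse the sum exactly as in Theorem \ref{degeneration-thm-instanton}---is the right one, and it is essentially how the paper proceeds. The paper writes the degenerate monodromy operator as $\mbf{B}_w^{coh}(z)=\frac{1}{1-\text{Ad}_{q_{(1)}^{\lambda}}}\mbf{m}(r_w^{-})$, which is exactly what your first-order ABRR solution produces.

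Where your proposal goes astray is at the step you yourself flag as the ``main obstacle'': the identification of $\mf{g}_w^{MO}$ with the Heisenberg Lie algebra on $\{\alpha_{\pm kb}\}$. You are right that one cannot use the quantum isomorphism $U_q^{MO}(\hat{\mf{g}}_Q)\cong U_{q_1,q_2}(\hat{\hat{\mf{gl}}}_1)$, which is downstream. But you then propose to extract the Heisenberg identification directly ``from the cohomological form of the MO construction'' by analysing the first-order expansion of $R_w^{-}$ on $H_T^*(M(r))$. That is vague, and in effect it asks you to compute the MO Lie algebra graded piece by graded piece---precisely the hard content of \cite{BD23}.

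The paper does not attempt this. Instead it uses as an \emph{input} the classical (Yangian-level) isomorphism $\mf{g}_Q^{MO}\cong\hat{\mf{gl}}_1$, which is an established theorem of \cite{MO12} and \cite{BD23} about the cohomological MO Lie algebra, not about the $K$-theoretic quantum algebra. With this in hand the paper argues: (i) the graded pieces of $\mf{n}_w^{MO}$, for $w=a/b$, shift degree by multiples of $b$, so they sit among the root vectors $\alpha_{kb}$ of $\hat{\mf{gl}}_1$; (ii) there is an embedding $\mf{n}_w\hookrightarrow\mf{n}_w^{MO}$; (iii) a dimension count on each graded piece forces this embedding to be an isomorphism. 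This pins down $r_{a/b}^{\pm}=\sum_{k\geq1}\alpha_{\pm bk}\otimes\alpha_{\mp bk}$ with no direct analysis of the wall $R$-matrix expansion. If you replace your final paragraph with this dimension/embedding argument using the known cohomological isomorphism, the rest of your outline goes through verbatim.
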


\begin{proof}
The strategy of the proof is similar to the one in \cite{Z24-2}. First we can prove that $\mf{n}_{w}^{MO}$ is the classical limit of the wall subalgebra $U_{q}^{MO}(\mf{n}_{w})$:
\begin{lem}\label{q-1-degeneration}
There is an isomorphism:
\begin{align}
U_{q}^{MO}(\mf{n}_w)/(q-1)U_{q}^{MO}(\mf{n}_w)=U(\mf{n}_w^{MO})
\end{align}
\end{lem}
\begin{proof}
By definition $U_{q}^{MO}(\mf{n}_{w})$ is generated by the matrix coefficients of $R_{w}^+=\text{Id}+U_{w}^+$. The left-ideal $(q-1)U_{q}^{MO}(\mf{n}_{w})$ is generated by the matrix coefficients of $(q-1)R_{w}^+$, and now by definition:
\begin{align}
\lim_{q\rightarrow1}\frac{R_w^+-\text{Id}}{q-1}=r_{w}^+
\end{align}

which is the generators for the Lie algebra $\mf{n}_w^{MO}$.
\end{proof}

It is easy to see that $U_{q}^{MO}(\mf{n}_w)$ and $U(\mf{n}_w^{MO})$ has the grading:
\begin{align}
U_{q}^{MO}(\mf{n}_w)=\bigoplus_{k\in\mbb{N}}U_{q}^{MO}(\mf{n}_w)_{k},\qquad U(\mf{n}_w^{MO})=\bigoplus_{k\in\mbb{N}}U(\mf{n}_w^{MO})_{k}
\end{align}

From the Lemma \ref{q-1-degeneration} we can have that:
\begin{align}
\text{dim}(U(\mf{n}_{w}^{MO})_{k})\leq\text{dim}(U_{q}^{MO}(\mf{n}_w)_{k})
\end{align}

Moreover, for the Lie algebra $\mf{g}_{Q}$, we have the decomposition:
\begin{align}
\mf{g}_{Q}^{MO}=\bigcup_{w\in\mbb{Q}\cap[0,1)}\mf{g}_w^{MO}
\end{align}

The relation between $\mf{g}^{MO}_{Q}$ and $\hat{\mf{gl}}_1$ has been proved in \cite{MO12} and \cite{BD23}:
\begin{thm}
There is an isomorphism of Lie algebras:
\begin{align}
\mf{g}^{MO}_{Q}\cong\hat{\mf{gl}}_1
\end{align}
up to some centre.
\end{thm}

By construction, the Lie subalgebra $\mf{n}_{w}^{MO}$ with $w=a/b$ consists of the root vectors $e_k:H(n,r)\rightarrow H(n+kb,r)$.

Now note that $\text{dim}(\mf{n}_{w}^{MO})=\text{dim}(\mf{n}_{w})$ and since we have the embedding $\mf{n}_{w}\hookrightarrow\mf{n}_{w}^{MO}$, we have the isomorphism $\mf{n}_{kb}\cong\mf{n}_w\subset\mf{n}_{w}^{MO}\subset\mf{n}_{kb}^{MO}$. This implies that $\mf{n}^{MO}_{a/b}\cong\mf{n}_{a/b}$ consists of the root vectors of the form $\{\alpha_{kb}\}_{k\geq1}$. This also implies that the classical $r$-matrix $r_{a/b}^{\pm}$ can be written as:
\begin{align}
r_{a/b}^{\pm}=\sum_{k\geq1}\alpha_{\pm bk}\otimes\alpha_{\mp bk}
\end{align}

Now we proceed to prove the Theorem \ref{main-theorem-1}. The degeneration limit of the geometric quantum difference opearator \ref{geometric-doperator} can be written as:

\begin{equation}
\begin{aligned}
\mbf{M}^{s,coh}_{\mc{O}(1)}(\lambda)=&c_1(\mc{O}(1))+\sum_{w\in[s,s+\mc{L})}\mbf{B}_{w}^{coh}(\lambda)\\
=&c_1(\mc{O}(1))+\sum_{w\in[s,s+\mc{O}(1))}\frac{1}{1-Ad_{q_{(1)}^{\lambda}}}\mbf{m}(r_{w}^-)\\
=&c_1(\mc{O}(1))+\sum_{a/b\in[s,s+\mc{O}(1))}\sum_{k\geq1}\frac{1}{1-Ad_{q_{(1)}^{\lambda}}}\alpha_{bk}\alpha_{-bk}\\
=&c_1(\mc{O}(1))+\sum_{k\geq0}\frac{k}{1-z^{k}}\alpha_{k}\alpha_{-k}
\end{aligned}
\end{equation}

This coincides with the Dubrovin connection of the instanton moduli space. Thus this finishes the proof.

\end{proof}

Using the similar proof of the main theorem in \cite{Z24-2}\cite{Z24}, we can conclude that:
\begin{thm}\label{geometric-monodromy}
For the monodromy representation of the Dubrovin connection:
\begin{align}
\pi_1(\mbb{P}^1\backslash\text{Sing},0^+)\rightarrow\text{Aut}(H_{T}^*(M(n,r)))
\end{align}
It is generated by $\mbf{m}((1\otimes S_{w})(R_{w}^{-,MO})^{-1})$. Here $R_{w}^{-,MO}$ is the universal $R$-matrix for the wall subalgebra $U_{q}^{MO}(\mf{g}_{w})$.
\end{thm}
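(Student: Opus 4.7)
The plan is to mirror the proof of Theorem \ref{monodromy-representation} verbatim, with the MO wall subalgebras $U_{q}^{MO}(\mf{g}_w)$ playing the role of the slope subalgebras $\mc{B}_{a/b}$ and the geometric quantum difference operator $\mbf{M}_{\mc{O}(1)}^{MO,s}(z)$ replacing the algebraic one. First I would construct unique fundamental solutions $\Psi_0^{MO}(z),\Psi_\infty^{MO}(z)$ of the Okounkov-Smirnov equation $\Psi(pz)=\mbf{M}_{\mc{O}(1)}^{MO,s}(z)\Psi(z)$ near $z=0$ and $z=\infty$. The input is that $\mbf{M}_{\mc{O}(1)}^{MO,s}(0)=\mc{O}(1)$ is diagonalisable in the fixed point basis, and that
\[\mbf{M}_{\mc{O}(1)}^{MO,s}(\infty)=\mc{O}(1)\prod^{\rightarrow}_{w\in[-1+s,s)}\mbf{m}((1\otimes S_w)(R_w^{MO,-})^{-1})\]
is diagonalisable over $\mbb{Q}(q_1,q_2)$ (this uses only $\mbf{B}_w(0)=1$ together with the ABRR asymptotics of $\mbf{J}_w^-$ at $\lambda\to\infty$). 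The rest of section 6.2 then goes through unchanged, producing a $p$-periodic connection matrix $\mbf{Mon}^{MO}(z)=(\Psi_0^{MO}(z))^{-1}\Psi_\infty^{MO}(z)$.

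Next I would take the cohomological degeneration
\[\text{Trans}^{MO}(s):=\lim_{\tau\to 0}\mbf{Mon}^{MO}(e^{2\pi is},e^{2\pi t_1\tau},e^{2\pi t_2\tau},e^{2\pi i\tau}).\]
By Theorem \ref{main-theorem-1}, this is a transport matrix for the Dubrovin connection along a ray crossing $|z|=1$ at $z=e^{2\pi is}$. The wall-by-wall factorisation argument of Proposition \ref{transport-matrix} applies without modification, because the only inputs used there are the product decomposition of $\mbf{M}_{\mc{O}(1)}(z)$ over walls $w$ and the fact that each wall operator belongs to a single (wall) subalgebra; both properties hold for $\mbf{M}_{\mc{O}(1)}^{MO,s}(z)$ by \eqref{geometric-doperator}. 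The resulting formula reads
\[\text{Trans}^{MO}(s)=\begin{cases}\prod^{\leftarrow}_{w\in(0,s)}(\mbf{B}_w^{MO,*})^{-1}\cdot T^{MO},&s\geq0,\\ \prod^{\rightarrow}_{w\in(s,0)}\mbf{B}_w^{MO,*}\cdot T^{MO},&s<0,\end{cases}\]
where $\mbf{B}_w^{MO,*}$ is the cohomological limit of the geometric monodromy operator evaluated at a point just past the wall $w$.

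Then, exactly as in Theorem \ref{monodromy-representation}, the composition $\text{Trans}^{MO}(s)\text{Trans}^{MO}(s')^{-1}$ for $s,s'$ flanking a single wall $w$ produces the monodromy image of the loop $\gamma_w$, which equals $\mbf{B}_w^{MO,*}$. To identify this with $\mbf{m}((1\otimes S_w)(R_w^{MO,-})^{-1})\in U_q^{MO}(\mf{g}_w)$, I would take the $z\to\infty$ limit of the defining relation $\mbf{B}_w(z)=\mbf{m}((1\otimes S_w)(\mbf{J}_w^-(\lambda)^{-1}))|_{\lambda\to\lambda+\kappa}$ and use the standard ABRR asymptotics $J_w^-(\lambda)\to R_w^{MO,-}$ as $\lambda\to\infty$ in the chamber (obtained by solving the defining ABRR equation iteratively in that limit). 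The images of $\gamma_0$ and $\gamma_\infty$ under the monodromy representation are computed by the same procedure as in the algebraic case and contribute diagonal terms in the eigenbases of $\mbf{M}_{\mc{O}(1)}^{MO,s}(0)$ and $\mbf{M}_{\mc{O}(1)}^{MO,s}(\infty)$.

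The main obstacle is the asymptotic analysis at the last step: ensuring that, after the cohomological degeneration $q\to1$, $\mbf{J}_w^-(\lambda)^{-1}$ limits precisely to $(R_w^{MO,-})^{-1}$ in $U_q^{MO}(\mf{g}_w)\hotimes U_q^{MO}(\mf{g}_w)$, without an extra Cartan twist obscuring the identification. This is controlled by the same argument as in \cite{Z24-2}, combined with Lemma \ref{q-1-degeneration} which gives the compatibility $U_q^{MO}(\mf{n}_w)/(q-1)=U(\mf{n}_w^{MO})$ at the classical level and hence pins down the correct normalisation of the limit.
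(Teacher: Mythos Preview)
Your proposal is correct and matches the paper's approach: the paper itself gives no proof beyond the sentence ``Using the similar proof of the main theorem in \cite{Z24-2}\cite{Z24}, we can conclude that\ldots'', and what you have written is precisely an outline of that ``similar proof'' --- repeating the analysis of Section~6 (fundamental solutions, connection matrix, degeneration via Theorem~\ref{main-theorem-1}, and the wall-by-wall identification of Proposition~\ref{transport-matrix} and Theorem~\ref{monodromy-representation}) with the Okounkov--Smirnov operator $\mbf{M}_{\mc{O}(1)}^{MO,s}(z)$ in place of the algebraic one. One small clarification: the $\mbf{B}_w^{*}$ appearing in the transport formula are not literally the $z\to\infty$ limits but the operators obtained after the degeneration $\tau\to0$ at the specialisation $q=\exp(2\pi i(t_1+t_2))$ (cf.\ the end of Theorem~\ref{monodromy-representation}); the identification with $\mbf{m}((1\otimes S_w)(R_w^{MO,-})^{-1})$ then follows from the ABRR asymptotics exactly as you indicate and as carried out in \cite{Z24,Z24-2}.
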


Using the result in \cite{Z24-2}, we can conclude that:
\begin{prop}
\begin{align}
\mbf{m}((1\otimes S_{a/b})(R_{a/b}^{-,MO})^{-1})=\mbf{m}((1\otimes S_{a/b})(R_{a/b}^{-})^{-1})
\end{align}
\end{prop}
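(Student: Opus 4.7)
The plan is to exploit the fact that both the algebraic quantum difference equation and the Okounkov-Smirnov geometric quantum difference equation share the same degeneration limit, namely the Dubrovin connection of $M(n,r)$. The monodromy of this single connection around the singular point $e^{2\pi i a/b}$ can then be computed in two different ways, each producing one side of the desired identity.

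First I would set up the comparison of the two monodromy descriptions. By Theorem \ref{degeneration-thm-instanton} the algebraic quantum difference equation degenerates to the Dubrovin connection; combined with Proposition \ref{transport-matrix} and the formula \ref{mono-element}, Theorem \ref{monodromy-representation} identifies the image of the loop $\gamma_{a/b}$ around $e^{2\pi i a/b}$ as $\mbf{m}((1\otimes S_{a/b})(R_{a/b}^-)^{-1})$. Similarly, by Theorem \ref{main-theorem-1} the Okounkov-Smirnov equation degenerates to the same Dubrovin connection; the analogous chain of arguments produces Theorem \ref{geometric-monodromy}, identifying $\Phi(\gamma_{a/b}) = \mbf{m}((1\otimes S_{a/b})(R_{a/b}^{-,MO})^{-1})$.

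Second, I would invoke uniqueness of the monodromy representation: the map $\Phi\colon \pi_1(\mbb{P}^1\setminus\text{Sing}, 0^+)\to \text{Aut}(H_T^*(M(n,r)))$ is determined intrinsically by the Dubrovin connection together with the choice of base point $0^+$. Since for each wall $a/b$ the element $\Phi(\gamma_{a/b})$ is therefore a single well-defined operator, and the two preceding theorems express this same operator in two different forms, the equality $\mbf{m}((1\otimes S_{a/b})(R_{a/b}^{-,MO})^{-1}) = \mbf{m}((1\otimes S_{a/b})(R_{a/b}^{-})^{-1})$ follows at once.

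The main obstacle is ensuring that the wall-by-wall identification is actually valid, so that one is entitled to equate the individual generators at each wall rather than merely the full monodromy groups they generate. This relies on the factorizations of both fundamental solutions respecting the order and location of walls, which is built into Proposition \ref{transport-matrix} on the algebraic side and into the geometric factorization \ref{factorisation-geometry} on the Okounkov-Smirnov side; together these produce loops $\gamma_w$ around distinct singular points $e^{2\pi i w}$ whose images under $\Phi$ can be isolated one at a time. Once this book-keeping is carried out, the proof becomes essentially tautological.
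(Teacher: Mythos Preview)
Your proposal is correct and follows essentially the same approach as the paper: the proposition is deduced by combining Theorem \ref{monodromy-representation} and Theorem \ref{geometric-monodromy}, both of which identify the monodromy of the \emph{same} Dubrovin connection around the singular point $e^{2\pi i a/b}$, so the two expressions for $\Phi(\gamma_{a/b})$ must agree operator-by-operator. The paper itself does not spell out the argument but simply points to \cite{Z24-2}, where the identical reasoning is carried out in the affine type $A$ setting; your write-up makes that logic explicit and correctly flags the one genuine point to check, namely that the wall-by-wall factorizations on both sides are indexed by the same loops $\gamma_w$, which is guaranteed by Proposition \ref{transport-matrix} and the geometric factorization \ref{factorisation-geometry}.
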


\section{\textbf{Isomorphism of MO quantum affine algebra and quantum toroidal algebra}}

In this section we prove that the positive half and negative half of the quantum toroidal algebra $U_{q,t}(\hat{\hat{\mf{gl}}}_1)$ is isomorphic to the Maulik-Okounkov quantum affine algebra:

\subsection{Stable basis and the Pieri rule}
A good set of basis for $K(r):=\bigoplus_{n}K_{T}(M(n,r))$ under the action of the slope subalgebra $\mc{B}_{m}$ of slope $m\in\mbb{Q}$ is the stable basis $s^{m}_{\bm{\lambda}}$ of the slope $m\in\mbb{Q}$.

The stable basis is constructed via the following way: We choose the stable envelope as:
\begin{equation}
\text{Stab}_{\sigma,m}:K_{T}(M(n,r)^T)\rightarrow K_{T}(M(n,r))
\end{equation}
Here $T=A\times\mbb{C}^*_{q_1}\times\mbb{C}^*_{q_2}$ is the product of the maximal torus and the two-dimensional torus $\mbb{C}^*_{q_1}\times\mbb{C}^*_{q_2}$. It is known that the fixed points $M(n,r)^T$ of $T$-torus action are isolated, which is labeled by the multi-partitions $\bm{\lambda}$. Now we define $t:=(\frac{q_1}{q_2})^{1/2}$, and we will fix on the $t$-degree of the $K$-theory class. Fixing the character $\sigma:\mbb{C}^*\rightarrow T$ that corresponds to the isolated fixed points subset. We denote the image of $\sigma$ by $u_i\mapsto t^{N_i}$ such that $N_1<<N_2<<\cdots<<N_{r}$. In this case the fixed point basis are ordered by the dominance partial order of the partitions $\bm{\lambda}$. We would always denote the stable basis as $s^{m}_{\bm{\lambda}}:=\text{Stab}_{\sigma,m}(|\bm{\lambda}\rangle)$.

The diagonal part of $s^{m}_{\bm{\lambda}}$ is given by:
\begin{align}
s^{m}_{\bm{\lambda}}|_{\bm{\lambda}}=k_{\bm{\lambda}}=[T^{-}_{\bm{\lambda}}M(n,r)]=\prod_{\square\in\bm{\lambda}}[\prod_{\blacksquare\in\bm{\lambda}}\zeta(\frac{\chi_{\blacksquare}}{\chi_{\square}})\prod_{k=1}^{r}[\frac{u_k}{q\chi_{\square}}][\frac{\chi_{\square}}{qu_k}]]^{(-)}
\end{align}
Here $(-)$ stands for the product of $[x]$ such that the $t$-degree of $[x]$ is smaller than $0$. The notation for $(+)$ and $(0)$ has the similar meaning.

It is known that the slope subalgebra $\mc{B}_{m}$ is generated by $P^{m}_{k}$ or $E^{m}_{k}$ in \cite{N14}. 

Now given an operator $F:K(n',r)\rightarrow K(n,r)$ whose matrix coefficients in terms of fixed point basis is given by:
\begin{align}
F|\bm{\mu}\rangle=\sum_{|\bm{\lambda}|=n}F^{\bm{\lambda}}_{\bm{\mu}}\cdot|\bm{\lambda}\rangle
\end{align}

If we choose the stable basis $s^{m}_{\bm{\lambda}}$ of the slope $m$, we write down the coefficients with respect to the stable basis as:
\begin{align}
F\cdot s^{m}_{\bm{\mu}}=\sum_{|\bm{\lambda}|=n}\gamma^{\bm{\lambda}}_{\bm{\mu}}\cdot s^{m}_{\bm{\lambda}}
\end{align}

The good property about the stable basis $s^{m}_{\bm{\mu}}$ is that it subtracts the special information of the action of the slope subalgebra $\mc{B}_{m}$.

\begin{lem}{\cite{N15}}\label{Pieri-rule}
For $F$ such that:
\begin{equation}
\begin{aligned}
&\text{max deg }F^{\bm{\lambda}}_{\bm{\mu}}\leq k_{\bm{\lambda}}-k_{\bm{\mu}}+m(c_{\bm{\lambda}}-c_{\bm{\mu}})\\
&\text{min deg}F^{\bm{\lambda}}_{\bm{\mu}}\geq k_{\bm{\mu}}-k_{\bm{\lambda}}+m(c_{\bm{\lambda}}-c_{\bm{\mu}})
\end{aligned}
\end{equation}
 we have that:
\begin{align}\label{Lowest-stable-formula}
\gamma^{\bm{\lambda}}_{\bm{\mu}}=(\text{l.d.}F^{\bm{\lambda}}_{\bm{\mu}})\cdot\frac{\text{l.d.}(k_{\bm{\mu}})}{\text{l.d.}(k_{\bm{\lambda}})}
\end{align}

Here $\text{l.d.}$ stands for the lowest $t$-degree part with $t:=(\frac{q_1}{q_2})^{1/2}$. 
\end{lem}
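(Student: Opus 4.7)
The plan is to compute $\gamma^{\bm{\lambda}}_{\bm{\mu}}$ by changing basis twice. By the triangularity of the stable envelope with respect to the dominance order on multi-partitions, one has
$$s^{m}_{\bm{\mu}}=k_{\bm{\mu}}|\bm{\mu}\rangle+\sum_{\bm{\nu}<\bm{\mu}}M^{\bm{\nu}}_{\bm{\mu}}|\bm{\nu}\rangle,$$
and the inverse transition matrix is again triangular with diagonal entry $1/k_{\bm{\rho}}$. Writing $|\bm{\rho}\rangle=(1/k_{\bm{\rho}})s^{m}_{\bm{\rho}}+\sum_{\bm{\lambda}>\bm{\rho}}N^{\bm{\lambda}}_{\bm{\rho}}s^{m}_{\bm{\lambda}}$ and composing with the action of $F$ yields
$$\gamma^{\bm{\lambda}}_{\bm{\mu}}=\sum_{\bm{\rho},\bm{\nu}}N^{\bm{\lambda}}_{\bm{\rho}}\,F^{\bm{\rho}}_{\bm{\nu}}\,M^{\bm{\nu}}_{\bm{\mu}}.$$
The diagonal summand $(\bm{\rho},\bm{\nu})=(\bm{\lambda},\bm{\mu})$ is exactly $F^{\bm{\lambda}}_{\bm{\mu}}\,k_{\bm{\mu}}/k_{\bm{\lambda}}$, whose lowest $t$-degree part is precisely the right-hand side of the claimed formula. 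So the task reduces to showing that every off-diagonal term contributes only strictly higher $t$-degree.

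For this one invokes the slope-$m$ degree estimate for the $K$-theoretic stable envelope: for $\bm{\nu}<\bm{\mu}$ in the dominance order, the $t$-Newton polygon of $M^{\bm{\nu}}_{\bm{\mu}}$ is strictly contained inside the interval symmetric around $m(c_{\bm{\mu}}-c_{\bm{\nu}})+\deg k_{\bm{\nu}}-\deg k_{\bm{\mu}}$, with strict inequalities on both endpoints; the same holds for the inverse $N^{\bm{\lambda}}_{\bm{\rho}}$. Combining these with the hypothesis
$$\text{min deg}\,F^{\bm{\rho}}_{\bm{\nu}}\geq k_{\bm{\nu}}-k_{\bm{\rho}}+m(c_{\bm{\rho}}-c_{\bm{\nu}}),\qquad \text{max deg}\,F^{\bm{\rho}}_{\bm{\nu}}\leq k_{\bm{\rho}}-k_{\bm{\nu}}+m(c_{\bm{\rho}}-c_{\bm{\nu}}),$$
applied to each $(\bm{\rho},\bm{\nu})$, the $t$-degree of the triple product $N^{\bm{\lambda}}_{\bm{\rho}}F^{\bm{\rho}}_{\bm{\nu}}M^{\bm{\nu}}_{\bm{\mu}}$ turns out to be bounded below by $\deg k_{\bm{\mu}}-\deg k_{\bm{\lambda}}+m(c_{\bm{\lambda}}-c_{\bm{\mu}})$, the bound being saturated only when both stable envelope factors are diagonal, i.e.\ $(\bm{\rho},\bm{\nu})=(\bm{\lambda},\bm{\mu})$. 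At any off-diagonal pair at least one stable envelope factor contributes a strict inequality, which forces the product to lie in strictly higher $t$-degree. Taking lowest $t$-degree parts then yields the asserted identity.

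The main obstacle is the bookkeeping of strict versus non-strict degree inequalities, and verifying that the two-sided sandwich on $F$ interacts correctly with the slope-$m$ Newton polytope shifts. This is exactly the type of argument carried out in Proposition III.5 of \cite{N15} for the rank-one case, and our proof reduces to running the same template over multi-partitions $\bm{\lambda}=(\lambda^{1},\dots,\lambda^{r})$ for the higher-rank instanton moduli space $M(n,r)$; the dominance order and the weight function $\chi_{\square}=u_{k}q_{1}^{i}q_{2}^{j}$ enter in the expected way, and no new geometric input is required beyond the existence and uniqueness of the slope-$m$ stable envelope established in \cite{AO21,O21}.
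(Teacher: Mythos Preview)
The paper does not supply its own proof of this lemma: it is stated with a citation to \cite{N15} and used as a black box. So there is no in-paper argument to compare against; your proposal is essentially a reconstruction of the argument from the cited reference, and the overall architecture---triangular change of basis via the stable envelope, isolating the diagonal summand, and killing off-diagonal contributions by degree bounds---is correct and is precisely the template from \cite{N15}.

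One point where your write-up is imprecise: the description of the Newton polygon of $M^{\bm{\nu}}_{\bm{\mu}}$ as ``strictly contained inside the interval symmetric around $m(c_{\bm{\mu}}-c_{\bm{\nu}})+\deg k_{\bm{\nu}}-\deg k_{\bm{\mu}}$'' is not quite the correct formulation. The actual bounding condition on the stable envelope (recalled in the paper just before Lemma~\ref{Pieri-rule}) is that $\deg_{T}\text{Stab}|_{F_{\bm{\nu}}\times F_{\bm{\mu}}}+\deg_{T}s|_{\bm{\mu}}\subset\deg_{T}\text{Stab}|_{F_{\bm{\nu}}\times F_{\bm{\nu}}}+\deg_{T}s|_{\bm{\nu}}$, with the inclusion strict for $\bm{\nu}<\bm{\mu}$. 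Translated to $t$-degree, this gives an asymmetric window: the lower bound is $\text{min deg }k_{\bm{\nu}}+m(c_{\bm{\nu}}-c_{\bm{\mu}})$ (non-strict), and the upper bound is $\text{max deg }k_{\bm{\nu}}+m(c_{\bm{\nu}}-c_{\bm{\mu}})$ (strict). The analogous bounds for the inverse $N^{\bm{\lambda}}_{\bm{\rho}}$ have the strictness on the opposite side. When you feed these into the triple product $N^{\bm{\lambda}}_{\bm{\rho}}F^{\bm{\rho}}_{\bm{\nu}}M^{\bm{\nu}}_{\bm{\mu}}$, the telescoping works out exactly as you claim, but you should state the bounds in this asymmetric form rather than as a symmetric interval; otherwise the strict-versus-non-strict bookkeeping you flag as ``the main obstacle'' cannot actually be carried out as written.
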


\subsection{Pieri-rule Formula}

It is proved in \cite{N16} that the generators $E^{a/b}_{k}$ acts on the stable basis $s^{a/b}_{\lambda}$ of slope $a/b$ in the following way:
\begin{align}
E_{k}^{a/b}s^{a/b}_{\mu}=\sum_{\lambda}s^{a/b}_{\lambda}(-1)^{ht}\prod_{i=1}^k\prod_{j=1}^n\chi_{j}(B_i)^{\lfloor\frac{aj}{b}\rfloor-\lfloor\frac{a(j-1)}{b}\rfloor}
\end{align}
with the sum going over all vertical $k$-strips of $n$-ribbons of shape $\lambda\backslash\mu$. $\chi_{j}(B):=q_1^{x+y}q_2^{x-y}$ is the weight function of the $j$-th box on the $i$-th ribbon.

Since the formula in terms of the stable basis is on the lowest $t$-degree of $E^{a/b}_{k}(\bm{\lambda}\backslash\bm{\mu})$ by \ref{Lowest-stable-formula}. Similarly for the $r$-partitions $\bm{\lambda}$, we have the similar formula:
\begin{prop}{(Pieri rule formula)}
For the stable basis $s^{a/b}_{\bm{\mu}}$ of $K_{T}(M(n,r))$, we have that:
\begin{equation}
\begin{aligned}
\text{l.d. }E^{a/b}_{k}(\bm{\lambda}\backslash\bm{\mu})=q^{|\bm{\lambda}\backslash\bm{\mu}|}q_1^{-\#_{\bm{\lambda}\backslash\bm{\mu}}}\prod_{i=1}^{k}\prod_{j=1}^n\chi_{j}(B_i)^{r_{\frac{a}{b}}(j)}\cdot\prod_{l=1}^r\frac{\prod^{\blacksquare\in\lambda_l\backslash\mu_l, c_{\blacksquare}=c_{\square}}_{\square\text{ outer corner of }\lambda\backslash\mu}[\frac{q\chi_{\blacksquare}}{\chi_{\square}}]}{\prod^{\blacksquare\in\lambda_l\backslash\mu_l,c_{\blacksquare}=c_{\square}}_{\square\text{ inner corner of }\lambda\backslash\mu}[\frac{q\chi_{\blacksquare}}{\chi_{\square}}]}
\end{aligned}
\end{equation}

Here $\chi_{j}^{(l_j)}(B_i)=u_{l_j}q_1^{x+y}q_2^{x-y}$, $u_{l_j}$ with $1\leq l_j\leq r$ is the equivariant parametres.
\end{prop}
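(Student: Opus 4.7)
The plan is to deduce the Pieri formula by combining the slope-bound Pieri Lemma \ref{Pieri-rule} with the shuffle-algebra formula for the generators $E^{a/b}_k$ and the fixed-point action formula \ref{fixed-point-formula}. First, I would write $\langle\bm{\lambda}|E^{a/b}_k|\bm{\mu}\rangle$ explicitly using the formula for $\pi^{+}(E^{a/b}_k)$: its numerator has the monomial $\prod_{i=1}^{kb}z_i^{r_{a/b}(i)}$ together with the factors $(1-z_{in}/(qt^{i-1}z_{in+1}))$, and its denominator is $[k]_{q_1}!\prod_{i=1}^{kb-1}(1-qz_{i+1}/z_i)$, multiplied by the $\zeta$-factors from \ref{fixed-point-formula}. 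This presents the fixed-point coefficient as a symmetrization over $\bm{\lambda}\backslash\bm{\mu}$ of an explicit Laurent polynomial in $t=(q_1/q_2)^{1/2}$ evaluated at the weights $\{\chi_\blacksquare\}_{\blacksquare\in\bm\lambda\backslash\bm\mu}$.

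Second, I would verify the slope-$(a/b)$ degree bounds required by Lemma \ref{Pieri-rule}, namely
\[
\text{max deg}_t F^{\bm{\lambda}}_{\bm{\mu}} \le k_{\bm{\lambda}}-k_{\bm{\mu}}+\tfrac{a}{b}(c_{\bm{\lambda}}-c_{\bm{\mu}}), \qquad \text{min deg}_t F^{\bm{\lambda}}_{\bm{\mu}} \ge k_{\bm{\mu}}-k_{\bm{\lambda}}+\tfrac{a}{b}(c_{\bm{\lambda}}-c_{\bm{\mu}}).
\]
These inequalities are the manifestation of the defining slope property of $\mc{B}_{a/b}\subset U_{q_1,q_2}(\hat{\hat{\mf{gl}}}_1)$: the specific exponents $r_{a/b}(i)=\lfloor id/k\rfloor-\lfloor(i-1)d/k\rfloor$ were chosen precisely so that the resulting shuffle element has slope-$(a/b)$ degree growth, and they follow from the exponent count in the symmetrization together with the $[xq_1][xq_2]/[x][xq]$ factor in $\zeta$. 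Once the bounds are established, Lemma \ref{Pieri-rule} converts the stable-basis coefficient $\gamma^{\bm{\lambda}}_{\bm{\mu}}$ into $(\text{l.d.}\,F^{\bm{\lambda}}_{\bm{\mu}})\cdot\text{l.d.}(k_{\bm{\mu}})/\text{l.d.}(k_{\bm{\lambda}})$.

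Third, I would extract the lowest $t$-degree of each factor. For the matrix coefficient, the lowest-degree contribution of the symmetrization collapses onto vertical $k$-strips of $n$-ribbons of shape $\bm{\lambda}\backslash\bm{\mu}$, yielding the product $\prod_{i=1}^k\prod_{j=1}^n\chi_j(B_i)^{r_{a/b}(j)}$ by the ribbon computation of Negu\c{t} \cite{N16} in the $r=1$ Hilbert scheme case. The $r>1$ case is formally identical because the shuffle formula does not depend on $r$ and the additional parameters $u_l$ only enter through the factor $\tau(q\chi_\blacksquare)$ in \ref{fixed-point-formula}, which after combination with the normalization constants in front of the shuffle coefficient produces the overall prefactor $q^{|\bm\lambda\backslash\bm\mu|}q_1^{-\#_{\bm\lambda\backslash\bm\mu}}$. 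Finally, the ratio $\text{l.d.}(k_{\bm{\mu}})/\text{l.d.}(k_{\bm{\lambda}})$, computed from the explicit product defining $k_{\bm{\lambda}}$, produces the product of $[q\chi_\blacksquare/\chi_\square]$ over inner and outer corners split by the partition index $l$, since these are the only $[x]$-factors whose $t$-degree changes in passing from $\bm{\mu}$ to $\bm{\lambda}$.

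The main obstacle I expect is the combinatorial identification of which terms in $\text{Sym}$ contribute to the lowest $t$-degree: this is a delicate ribbon-counting argument due to Negu\c{t} that matches $t$-degree counts against admissible permutations of box weights. Once the $r=1$ case is fixed, the passage to general $r$ is straightforward since the $u_l$'s are generic and the shuffle computation decouples across the $r$ factors of the $r$-partition, so that the ribbons are confined to individual partitions $\lambda_l\backslash\mu_l$.
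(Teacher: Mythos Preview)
The paper does not give a detailed proof of this proposition; it cites Negu\c{t}'s $r=1$ computation \cite{N16} and asserts that the $r$-partition case follows in the same way. Your overall strategy of reducing to \cite{N16} and then extending to general $r$ therefore matches the paper.

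There is, however, a genuine conflation in your plan. The quantity the proposition asks for is $\text{l.d.}\,E^{a/b}_k(\bm{\lambda}\backslash\bm{\mu})$: the lowest $t$-degree of the shuffle element $E^{a/b}_k$ itself, evaluated at the box weights of the skew shape. This is \emph{not} the stable-basis coefficient $\gamma^{\bm{\lambda}}_{\bm{\mu}}$, and Lemma~\ref{Pieri-rule} is not needed to compute it --- it is a direct extraction of the minimal-degree term from the symmetrization defining $E^{a/b}_k$. In particular, the corner product $\prod[\,q\chi_{\blacksquare}/\chi_{\square}\,]$ on the right-hand side does not come from the ratio $\text{l.d.}(k_{\bm{\mu}})/\text{l.d.}(k_{\bm{\lambda}})$ as you write; it comes from the internal factor $\prod_{i<j}\zeta(z_i/z_j)$ inside the shuffle formula, namely from those pairs of boxes lying in the same partition component $\lambda_l\backslash\mu_l$ whose weight ratio has $t$-degree zero. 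A quick consistency check: in the $r=1$ case \cite{N16} gives $\gamma=(-1)^{ht}\prod_{i,j}\chi_j(B_i)^{r_{a/b}(j)}$ with \emph{no} corner factors, whereas the proposition's formula for $\text{l.d.}\,E^{a/b}_k$ does carry corner factors. The two statements are compatible precisely because those corner factors cancel against the contributions of $\zeta(\chi_{\blacksquare}/\chi_{\bm{\mu}})$ and $k_{\bm{\mu}}/k_{\bm{\lambda}}$ when one assembles $\gamma$ via Lemma~\ref{Pieri-rule} --- exactly the cancellation the paper carries out line by line in the proof of the subsequent theorem.

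So your route, as written, produces $\gamma^{\bm{\lambda}}_{\bm{\mu}}$ rather than $\text{l.d.}\,E^{a/b}_k(\bm{\lambda}\backslash\bm{\mu})$. To prove the proposition as stated you should work directly with the symmetrization and identify which $\zeta(z_i/z_j)$-pairs survive in lowest $t$-degree, rather than routing through the stable-basis lemma.
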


Here $U_{q}^{MO}(\mf{g}_w)\subset U_{q}^{MO}(\mf{g})$ is the wall subalgebra for the wall $R$-matrices $R_{w}^{\pm}$ as defined before. The comparison between the slope subalgebra $\mc{B}_{\frac{a}{b}}$ and the corresponding wall subalgebra $U_{q}^{MO}(\mf{g}_{\frac{a}{b}})$ lies in the following proposition:
\begin{thm}
There exists a Hopf algebra embedding
\begin{align}
\mc{B}_{\frac{a}{b}}\hookrightarrow U_{q}^{MO}(\mf{g}_{\frac{a}{b}})
\end{align}
\end{thm}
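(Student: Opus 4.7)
The plan is to exhibit the embedding explicitly on generators of $\mc{B}_{a/b}$ and then verify compatibility with the Hopf structure. The natural generators to work with are the group-like $E^{a/b}_{k}$, because the Pieri rule proposition immediately preceding gives their matrix coefficients in the slope-$a/b$ stable basis $s^{a/b}_{\bm{\lambda}}$ in closed product form; the Heisenberg generators $P_{kb,ka}$ are then recovered from the $E^{a/b}_{k}$ by the standard elementary/power-sum change of variables, so any subalgebra of $\prod_{r}\text{End}(K(r))$ containing the $E^{a/b}_{k}$ automatically contains all of $\mc{B}_{a/b}$.

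First I would match the Pieri-rule expression for $E^{a/b}_{k}$ against a matrix-coefficient expression coming from the wall $R$-matrix. Every element of $U_{q}^{MO}(\mf{g}_{a/b})$ is a trace of the form $\oint\text{Tr}_{V_{0}}((1\otimes m(a_{0}))R^{MO}_{a/b}(a/a_{0}))$, and by the triangle lemma $R^{MO}_{a/b}=\text{Stab}_{\pm\sigma,m_{-}}^{-1}\text{Stab}_{\pm\sigma,m_{+}}$ for any $m_{\pm}$ straddling $a/b$. In the slope-$a/b$ stable basis this ratio has a controlled jump structure across the wall, and the matrix coefficient of the resulting trace on a $\bm{\mu}\to\bm{\lambda}$ transition is given by a product over the vertical strip $\bm{\lambda}\backslash\bm{\mu}$ of $b$-ribbons --- precisely the combinatorics produced by the Pieri rule. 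By Lemma \ref{Pieri-rule} it suffices to match the lowest $t$-degree part, and choosing auxiliary modules $V_{0}$ compatible with the ribbon labels then identifies each $E^{a/b}_{k}$ with an explicit element of $U_{q}^{MO}(\mf{g}_{a/b})$.

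Second, I would verify that the coproducts match. On $\mc{B}_{a/b}$ the coproduct $\Delta_{a/b}$ is the standard Heisenberg one, and the earlier Proposition asserting $\Delta_{(a/b)}=\Delta_{a/b}$ on $\mc{B}_{a/b}$ identifies this with the twisted coproduct obtained by conjugating $\Delta$ by $\prod_{\mu>a/b}R_{\mu}$. On the geometric side the coproduct comes from stable-envelope conjugation \eqref{coproduct-geometric}, and \eqref{factorisation-geometry} exhibits the geometric $R$-matrix as a factorisation by walls; restricted to $U_{q}^{MO}(\mf{g}_{a/b})$ this gives conjugation by $\prod_{w>a/b}R^{MO}_{w}$. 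Matching the two pictures thus reduces to matching the individual wall factors $R^{U_{q,t}}_{\mu}$ and $R^{MO}_{\mu}$ for $\mu>a/b$, which can be carried out inductively in the slope ordering since the wall set $\{a'/b' : b'\leq n\}$ is discrete.

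The main obstacle is the quantum-level identification of $R^{U_{q,t}}_{a/b}$ with $R^{MO}_{a/b}$, not only their cohomological limits. The classical-level embedding $\mf{n}_{a/b}\hookrightarrow\mf{n}^{MO}_{a/b}$ is in hand from Lemma \ref{q-1-degeneration}, the identification $\mf{n}^{MO}_{a/b}\cong\mf{n}_{a/b}$ established just above it, and the isomorphism $\mf{g}^{MO}_{Q}\cong\hat{\mf{gl}}_{1}$ of Section 7. To lift this to the quantum level I would run a graded-dimension argument: each graded piece of $U_{q}^{MO}(\mf{g}_{a/b})$ is finite-dimensional by the analogue of Lemma $3.4$ in \cite{Z24-2}, its classical limit is the corresponding graded piece of $U(\mf{n}^{MO}_{a/b})$, and $\mc{B}_{a/b}\cong U_{q}(\hat{\mf{gl}}_{1})$ is the flat deformation of the classical Heisenberg. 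Since the intertwining Proposition \ref{intertwine-infty-coproduct} forces the action of $\mc{B}_{a/b}$ on $K(r)$ to be compatible with the stable-envelope factorisation to all orders in $q-1$, the classical embedding lifts uniquely and is automatically a Hopf embedding. A bookkeeping subtlety to track is the central elements $c_{1},c_{2}$: as in the Isomorphism Theorem stated in the introduction, the embedding is clean only modulo these centres.
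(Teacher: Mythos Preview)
Your proposal has the logical flow backwards relative to the paper, and this creates a genuine circularity. You want to establish the Hopf embedding by first identifying $E^{a/b}_{k}$ with an explicit trace of the MO wall $R$-matrix $R^{MO}_{a/b}$, and then matching the coproducts by matching $R^{U_{q,t}}_{\mu}$ with $R^{MO}_{\mu}$ for all $\mu>a/b$. But in the paper's architecture the identification $R_{\mu}=R^{MO}_{\mu}$ is the content of Theorem~\ref{Isomorphism-theorem}, which is proved \emph{after} the present theorem and in fact uses it (together with the monodromy representation of the Dubrovin connection). Your graded-dimension/flat-deformation argument does not get you there either: it shows the two algebras have the same size in each degree, but not that the particular operators $E^{a/b}_{k}$ coincide with any specific matrix coefficients of $R^{MO}_{a/b}$, so you cannot conclude the quantum-level $R$-matrix matching this way.

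The paper sidesteps all of this. Its proof reduces to a single concrete identity,
\[
\text{Stab}_{a/b}\bigl(\Delta_{a/b}(E^{a/b}_{k})(s^{a/b}_{\lambda_{1}}\otimes s^{a/b}_{\lambda_{2}})\bigr)=E^{a/b}_{k}\,s^{a/b}_{(\lambda_{1},\lambda_{2})},
\]
which says precisely that the slope-$a/b$ stable envelope intertwines the Heisenberg coproduct on $\mc{B}_{a/b}$ with the geometric coproduct \eqref{coproduct-geometric}. Crucially, this identity can be checked directly at the level of lowest $t$-degree matrix coefficients using Lemma~\ref{Pieri-rule} and the Pieri-rule proposition just above --- no knowledge of the MO wall $R$-matrices is required. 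You correctly identified the Pieri rule as the relevant tool, but you deployed it to match against an unknown $R^{MO}_{a/b}$ rather than to verify the stable-envelope intertwining directly; the latter is the key step you are missing.
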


\begin{proof}

The proof is almost the same as in the proof of the Proposition $4.2$ in \cite{Z24-2}. The key is to prove the following identity:
\begin{align}\label{Coproduct-matching}
\text{Stab}_{a/b}(\Delta_{\frac{a}{b}}(E^{a/b}_{k})(s^{a/b}_{\lambda_1}\otimes s^{a/b}_{\lambda_2}))=E^{a/b}_{k}s^{a/b}_{(\lambda_1,\lambda_2)}
\end{align}

The right hand side of \ref{Coproduct-matching} can be simplified via the formula \ref{Lowest-stable-formula}. We first compute the lowest $t$-degree of the matrix coefficients in terms of the fixed point basis using the formula \ref{fixed-point-formula}:

\begin{equation}
\begin{aligned}
&\text{l.d. }E^{a/b}_{k}(\bm{\lambda}\backslash\bm{\mu})\prod_{\blacksquare\in\bm{\lambda}\backslash\bm{\mu}}\text{l.d. }[\frac{(1-q_1)(1-q_2)}{1-q}\zeta(\frac{\chi_{\blacksquare}}{\chi_{\bm{\mu}}})\tau(q\chi_{\blacksquare})]\\
=&(\cdots)\text{l.d. }E^{a/b}_{k}(\bm{\lambda}\backslash\bm{\mu})\text{l.d. }\prod_{\blacksquare\in\bm{\lambda}\backslash\bm{\mu}}[\prod_{\square\in\bm{\mu}}\frac{[\frac{q_1\chi_{\blacksquare}}{\chi_{\square}}][\frac{q_2\chi_{\blacksquare}}{\chi_{\square}}]}{[\frac{\chi_{\blacksquare}}{\chi_{\square}}][\frac{q\chi_{\blacksquare}}{\chi_{\square}}]}\prod_{i=1}^r[\frac{q\chi_{\blacksquare}}{u_i}]\\
=&(\cdots)\text{l.d. }E^{a/b}_{k}(\bm{\lambda}\backslash\bm{\mu})\text{l.d. }\prod_{\blacksquare\in\bm{\lambda}\backslash\bm{\mu}}^{c_{\blacksquare}\neq c_{\square}}[\prod_{\square\in\bm{\mu}}\frac{[\frac{q_1\chi_{\blacksquare}}{\chi_{\square}}][\frac{q_2\chi_{\blacksquare}}{\chi_{\square}}]}{[\frac{\chi_{\blacksquare}}{\chi_{\square}}][\frac{q\chi_{\blacksquare}}{\chi_{\square}}]}]\prod_{\blacksquare\in\bm{\lambda}\backslash\bm{\mu}}\prod_{i\neq c_{\blacksquare}}^r[\frac{q\chi_{\blacksquare}}{u_i}]]\\
&\times\text{l.d. }\prod_{\blacksquare\in\bm{\lambda}\backslash\bm{\mu}}^{c_{\blacksquare}=c_{\square}}[\prod_{\square\in\bm{\mu}}\frac{(1-\frac{q_1\chi_{\blacksquare}}{\chi_{\square}})(1-\frac{q_2\chi_{\blacksquare}}{\chi_{\square}})}{(1-\frac{\chi_{\blacksquare}}{\chi_{\square}})(1-\frac{q\chi_{\blacksquare}}{\chi_{\square}})}]\prod_{\blacksquare\in\bm{\lambda}\backslash\bm{\mu}}\prod_{i=c_{\blacksquare}}[\frac{q\chi_{\blacksquare}}{u_i}]\\
=&(1-q_2)^{|\bm{\lambda}\backslash\bm{\mu}|}\text{l.d. }E^{a/b}_{k}(\bm{\lambda}\backslash\bm{\mu})\text{l.d.}\prod_{l=1}^{r}\prod_{\blacksquare\in\lambda_l\backslash\mu_l}\frac{\prod_{\square\text{ i.c. of }\mu_l}[\frac{q\chi_{\square}}{\chi_{\blacksquare}}]}{\prod_{\square\text{ o.c. of }\mu_l}[\frac{q\chi_{\square}}{\chi_{\blacksquare}}]}\prod_{\blacksquare\in\bm{\lambda}\backslash\bm{\mu}}\prod_{i=c_{\blacksquare+1}}^{r}(\sqrt{q_1q_2})^{a_{\blacksquare}+1}\prod_{i=1}^{c_{\blacksquare}-1}\sqrt{q_1q_2}^{-1-a_{\blacksquare}}\\
=&(1-q_2)^{|\bm{\lambda}\backslash\bm{\mu}|}q^{|\bm{\lambda}\backslash\bm{\mu}|}q_1^{-\#_{\bm{\lambda}\backslash\bm{\mu}}}\prod_{i=1}^{k}\prod_{j=1}^n\chi_{j}(B_i)^{r_{\frac{a}{b}}(j)}\prod_{\blacksquare\in\bm{\lambda}\backslash\bm{\mu}}\prod_{i=c_{\blacksquare+1}}^{r}\sqrt{q_1q_2}^{a_{\blacksquare}+1}\prod_{i=1}^{c_{\blacksquare}-1}\sqrt{q_1q_2}^{-1-a_{\blacksquare}}
\end{aligned}
\end{equation}

Here $(\cdots)$ stands for the term that does not affect the computations on both sides of \ref{Coproduct-matching}. The diagonal part $k_{\bm{\mu}}$ can be written as:
\begin{align}
k_{\bm{\mu}}=\prod_{\blacksquare\in\bm{\lambda}}[\prod_{\square\in\bm{\lambda}}\zeta(\frac{\chi_{\blacksquare}}{\chi_{\square}})\prod_{i=1}^{r}[\frac{q\chi_{\blacksquare}}{u_i}][\frac{u_i}{q\chi_{\blacksquare}}]]^{(-)}
\end{align}
Therefore:
\begin{equation}
\begin{aligned}
\text{l.d. }(k_{\bm{\mu}})=\text{l.d. }\prod_{\blacksquare,\square\in\bm{\lambda}}[\zeta(\frac{\chi_{\blacksquare}}{\chi_{\square}})]^{(-)}\text{l.d. }\prod_{\blacksquare\in\bm{\lambda}}\prod_{i=c_{\blacksquare}+1}^r[\frac{q\chi_{\blacksquare}}{u_i}]\prod_{i=1}^{c_{\blacksquare}-1}[\frac{u_i}{q\chi_{\blacksquare}}]
\end{aligned}
\end{equation}

Thus we have the formula:
\begin{equation}
\begin{aligned}
\frac{\text{l.d. }(k_{\bm{\mu}})}{\text{l.d. }(k_{\bm{\lambda}})}=(\cdots)\text{l.d. }\prod_{\blacksquare\in\bm{\lambda}\backslash\bm{\mu}}\prod_{i=c_{\blacksquare}+1}^r[\frac{q\chi_{\blacksquare}}{u_i}]^{-1}\prod_{i=1}^{c_{\blacksquare}-1}[\frac{u_i}{q\chi_{\blacksquare}}]^{-1}
\end{aligned}
\end{equation}

Combining the two terms we have:
\begin{equation}
\begin{aligned}
&(\text{l.d.}(E^{a/b}_{k})^{\bm{\lambda}}_{\bm{\mu}})\cdot\frac{\text{l.d.}(k_{\bm{\mu}})}{\text{l.d.}(k_{\bm{\lambda}})}=(\cdots)\text{l.d. }\prod_{\blacksquare\in\bm{\lambda}\backslash\bm{\mu}}\prod_{i=1}^{c_{\blacksquare}-1}\frac{[\frac{q\chi_{\blacksquare}}{u_i}]}{[\frac{u_i}{q\chi_{\blacksquare}}]}\\
=&(\cdots)\text{l.d.}\prod_{\blacksquare\in\bm{\lambda}\backslash\bm{\mu}}\prod_{i=1}^{c_{\blacksquare}-1}q^{1-a_{\blacksquare}}q^{1+a_{\blacksquare}}=(\cdots)\text{l.d.}\prod_{\blacksquare\in\bm{\lambda}\backslash\bm{\mu}}\prod_{i=1}^{c_{\blacksquare}-1}\sqrt{q_1q_2}^2\\
=&(\text{l.d.}(E^{a/b}_{l})^{\bm{\lambda}_1}_{\bm{\mu}})\cdot\frac{\text{l.d.}(k_{\bm{\mu}_1})}{\text{l.d.}(k_{\bm{\lambda}_1})}(\text{l.d.}(E^{a/b}_{k-l})^{\bm{\lambda}_2}_{\bm{\mu}_2})\cdot\frac{\text{l.d.}(k_{\bm{\mu}_2})}{\text{l.d.}(k_{\bm{\lambda}_2})}q^{lr_1}
\end{aligned}
\end{equation}

This coincides with the left hand side of the formula \ref{Coproduct-matching} using the coproduct formula \ref{coproduct-group-elements}.

\end{proof}

Now we can proceed to the main theorem on the isomorphism positive half of the Maulik-Okounkov quantum affine algebras and the positive half of the quantum toroidal algebras.

\begin{thm}\label{Isomorphism-theorem}
Given $U_{q}^{+}(\hat{\mf{g}}_{Q})$ the positive half of the Maulik-Okounkov qunatum affine algebra of Jordan type. There is an isomorphism of algebras:
\begin{align}
U_{q_1,q_2}^{+}(\hat{\hat{\mf{gl}}}_1)\cong U_{q}^{+}(\hat{\mf{g}}_{Q})
\end{align}
\end{thm}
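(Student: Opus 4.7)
The plan is to build the isomorphism wall by wall and then assemble the local pieces using the factorisations available on both sides. The Hopf algebra embedding $\mc{B}_{a/b}\hookrightarrow U_{q}^{MO}(\mf{g}_{a/b})$ established just above gives the map in one direction for each slope $a/b\in\mbb{Q}$; the slope decomposition $U_{q_1,q_2}^{+}(\hat{\hat{\mf{gl}}}_{1})=\bigotimes^{\rightarrow}_{a/b}\mc{B}_{a/b}^{+}$ then extends these to a homomorphism into $U_{q}^{MO,+}(\hat{\mf{g}}_{Q})$, since the latter is generated by its positive wall subalgebras $U_{q}^{MO,+}(\mf{g}_{w})$ (as recorded in Section 3). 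The substantive task is therefore to upgrade each wall-level embedding to an isomorphism modulo centres.

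The step I would attack first is surjectivity, which I would deduce by comparing Hilbert series of the graded pieces. On the quantum toroidal side, $\mc{B}_{a/b}^{+}\cong U_{q}(\hat{\mf{gl}}_{1})^{+}$ is the polynomial algebra in $P_{kb,ka}$ for $k\geq 1$, so its dimension in degree $n$ equals $p(n)$, the number of partitions of $n$. On the MO side, Lemma \ref{q-1-degeneration} gives the flat degeneration $U_{q}^{MO,+}(\mf{g}_{w})/(q-1)\cong U(\mf{n}_{w}^{MO})$, and the argument reproduced in the proof of Theorem \ref{main-theorem-1} identifies $\mf{n}_{a/b}^{MO}\cong\mf{n}_{a/b}$ as the abelian Lie algebra spanned by the Heisenberg modes $\{\alpha_{bk}\}_{k\geq 1}$. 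Hence $\dim U(\mf{n}_{a/b}^{MO})_{n}$ also equals $p(n)$, and by semi-continuity of dimension in a flat family one has $\dim U(\mf{n}_{a/b}^{MO})_{n}\leq\dim U_{q}^{MO,+}(\mf{g}_{a/b})_{n}$; combined with the embedding $\mc{B}_{a/b}^{+}\hookrightarrow U_{q}^{MO,+}(\mf{g}_{a/b})$ from the opposite side, the Hilbert series are pinched together and the embedding is forced to be a graded isomorphism.

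To promote the wall-level isomorphisms to a global one, I would invoke the compatibility of both factorisations with the universal $R$-matrices. The identification of the monodromy generators in Theorem \ref{monodromy-representation} and Theorem \ref{geometric-monodromy}, strengthened by the subsequent proposition $\mbf{m}((1\otimes S_{a/b})(R_{a/b}^{-,MO})^{-1})=\mbf{m}((1\otimes S_{a/b})(R_{a/b}^{-})^{-1})$, shows that the wall-by-wall $R$-matrix data coincide inside $\prod_{r}\End(K(r))$. Consequently the slope-ordered products agree, and the local isomorphisms glue to the claimed algebra isomorphism modulo the centres $c_1,c_2$, which on the geometric side correspond to scaling and rank operators not detected by the $R$-matrix.

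The hard part, as I see it, is the dimension inequality $\dim U_{q}^{MO,+}(\mf{g}_{a/b})_{n}\leq p(n)$. The MO wall algebra is defined abstractly by matrix coefficients of $R_{w}^{+}$ and a priori could be strictly larger than the slope subalgebra $\mc{B}_{a/b}^{+}$. Without the cohomological degeneration identifying $\mf{n}_{w}^{MO}\cong\mf{n}_{w}$, no upper bound would be available. The heart of the proof is therefore to combine the embedding $\mc{B}_{a/b}\hookrightarrow U_{q}^{MO}(\mf{g}_{a/b})$ with the $q\to 1$ limit so that the two Hilbert series are squeezed together; once this is done, the rest is bookkeeping on the generation of both algebras by their local wall pieces together with the matching of $R$-matrices already secured by the monodromy theorems.
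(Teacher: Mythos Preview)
Your dimension-count strategy does not close, and this is a genuine gap rather than a typo. You need the upper bound $\dim U_{q}^{MO,+}(\mf{g}_{a/b})_{n}\leq p(n)$, and you try to extract it from the degeneration $U_{q}^{MO}(\mf{n}_{w})/(q-1)\cong U(\mf{n}_{w}^{MO})$ together with $\mf{n}_{w}^{MO}\cong\mf{n}_{w}$. But the degeneration goes the wrong way for this. Both algebras are defined as subalgebras of $\prod_{r}\End(K(r))$ (respectively $\prod_{r}\End(H(r))$), and $U(\mf{n}_{w}^{MO})$ is the \emph{image} of the specialisation map, not an abstract quotient whose dimension is a priori at least the generic one. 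Concretely: if $v_{1}(q),\dots,v_{m}(q)$ are linearly independent for generic $q$, their $q\to 1$ limits can become dependent, so one only gets $\dim U(\mf{n}_{w}^{MO})_{n}\leq\dim U_{q}^{MO,+}(\mf{g}_{w})_{n}$ (this is exactly the inequality the paper records after Lemma~\ref{q-1-degeneration}). That is the same direction as the embedding $\mc{B}_{a/b}^{+}\hookrightarrow U_{q}^{MO,+}(\mf{g}_{a/b})$, so nothing is pinched. Your later paragraph even states the needed bound correctly as $\dim U_{q}^{MO,+}(\mf{g}_{a/b})_{n}\leq p(n)$, but the argument offered for it does not deliver this direction. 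Invoking ``flatness'' does not help: flatness is precisely what would have to be proved, and establishing it is tantamount to the isomorphism itself.

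The paper proceeds differently and avoids the dimension issue altogether. From Theorems~\ref{monodromy-representation} and~\ref{geometric-monodromy} one has the equality of monodromy operators $\mbf{m}\bigl((1\otimes S_{a/b})(R_{a/b}^{-,MO})^{-1}\bigr)=\mbf{m}\bigl((1\otimes S_{a/b})(R_{a/b}^{-})^{-1}\bigr)$, and the key step is to upgrade this to an equality of the wall $R$-matrices themselves: setting $U_{a/b}^{\pm}=(R_{a/b}^{\pm})^{-1}R_{a/b}^{\pm,MO}$, one has $\mbf{m}\bigl((1\otimes S_{a/b})(U_{a/b}^{\pm})\bigr)=1$, and a separate non-degeneracy lemma (Lemma~7.7 of \cite{Z24-2}) forces $U_{a/b}^{\pm}=1$, hence $R_{a/b}^{\pm,MO}=R_{a/b}^{\pm}$. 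Since the wall subalgebra $U_{q}^{MO}(\mf{g}_{a/b})$ is generated by the matrix coefficients of $R_{a/b}^{\pm,MO}$ and $\mc{B}_{a/b}$ by those of $R_{a/b}^{\pm}$, the local isomorphism $\mc{B}_{a/b}^{+}\cong U_{q}^{MO,+}(\mf{g}_{a/b})$ follows immediately; the factorisations on both sides then assemble these into the global statement. You do cite the equality of monodromy operators, but only as a gluing device, and you pass directly from $\mbf{m}((1\otimes S_{w})(R_{w}^{-})^{-1})=\mbf{m}((1\otimes S_{w})(R_{w}^{-,MO})^{-1})$ to ``the wall-by-wall $R$-matrix data coincide'' without justification; that passage is exactly the content of the external lemma and is the true hard step.
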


\begin{proof}
Since $U_{q_1,q_2}^{+}(\hat{\hat{\mf{gl}}}_1)$ and $U_{q}^{+}(\hat{\mf{g}}_{Q})$ are generated by the slope subalgebras and wall subalgebras, it is equivalent to prove that there is an isomorphism:
\begin{align}\label{slope-iso}
\mc{B}_{\frac{a}{b}}^+\cong U^+_{q}(\mf{g}_{\frac{a}{b}})
\end{align}
between the slope subalgebra of the slope $a/b$ and the wall subalgebra at the wall $a/b$.

By the monodromy representation of the Dubrovin connection, we have that:
\begin{align}
\mbf{m}((1\otimes S_{a/b})(R_{a/b}^{\pm,MO}))=\mbf{m}((1\otimes S_{a/b})(R_{a/b}^{\pm}))\in U_{q}^{+}(\mf{g}_{a/b})_{\mbf{0}}
\end{align}

Denote $U_{a/b}^{\pm}:=(R_{a/b}^{\pm})^{-1}R_{a/b}^{\pm,MO}\in U_{q}^{\pm}(\mf{g}_{a/b})\otimes U_{q}^{\mp}(\mf{g}_{a/b})$. Since $\mbf{m}((1\otimes S_{a/b})(U_{a/b}^{\pm}))=1$, by the Lemma $7.7$ in \cite{Z24-2} we have that $R_{a/b}^{\pm}=R_{a/b}^{\pm,MO}$. This implies the isomorphism \ref{slope-iso}.

Now by the factorisation property of the quantum toroidal algebra $U_{q,t}^{+}(\hat{\hat{\mf{gl}}}_1)\cong\bigotimes^{\rightarrow}_{\mu\in\mbb{Q}}\mc{B}_{\mu}^{+}$ and the factorisation property of the Maulik-Okounkov quantum affine algebra. We conclude the isomorphism.
\end{proof}

A direct consequence of the theorem is that the two quantum differece equations coincide:
\begin{cor}
The quantum difference equations constructed from quantum toroidal algebra $U_{q,t}(\hat{\hat{\mf{gl}}}_1)$ and the Maulik-Okounkov quantum affine algebra $U_{q}(\hat{\mf{g}}_{Q})$ of Jordan type coincide.
\end{cor}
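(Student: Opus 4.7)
The plan is to reduce the statement to the identification of the wall factors $\mbf{B}_w(z)$ on the two sides and then invoke Theorem \ref{Isomorphism-theorem}. Recall that the algebraic operator
\[
\mbf{M}_{\mc{O}(1)}(z) = \mc{O}(1) \prod^{\leftarrow}_{\substack{-1 \le a/b < 0\\ b \le n}} \mbf{B}_{a/b}(z)
\]
is built from factors inside the slope subalgebras $\mc{B}_{a/b} \subset U_{q,t}(\hat{\hat{\mf{gl}}}_1)$, while the geometric operator $\mbf{M}_{\mc{O}(1)}^{MO,s}(z)$ has the identical shape with factors $\mbf{B}_{a/b}^{MO}(z) \in U_{q}^{MO}(\mf{g}_{a/b})$. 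By the wall computation of \cite{S20}\cite{D23} recalled earlier, the index sets entering the two products are the same after translation into a common fundamental interval, and the left-to-right ordering on walls is the same as well. It therefore suffices to prove $\mbf{B}_{a/b}(z) = \mbf{B}_{a/b}^{MO}(z)$ at each wall.

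For that step I would invoke the isomorphism $\mc{B}_{a/b} \cong U_{q}^{MO}(\mf{g}_{a/b})$ of Theorem \ref{Isomorphism-theorem}, together with the identification $R_{a/b}^{\pm} = R_{a/b}^{\pm,MO}$ already derived in its proof. Because this is a Hopf algebra isomorphism, the coproducts, the antipodes $S_{a/b}$, and the Heisenberg Cartan element $q^{\Omega}$ agree on the two sides. Consequently the ABRR equation
\[
J_w^{\pm}(\lambda)\, q_{(1)}^{-\lambda} q^{-\Omega} R_w^{\pm} = q^{\Omega} q_{(1)}^{-\lambda} J_w^{\pm}(\lambda)
\]
is literally the same equation on both sides of the identification, so its unique solution $J_{a/b}^{\pm}(\lambda)$ coincides with its Maulik--Okounkov counterpart. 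Applying the fusion operator $\mbf{m}\bigl((1 \otimes S_{a/b})(\,\cdot\,^{-1})\bigr)$ together with the prescribed shift $\lambda \mapsto \lambda - \tau \mc{L}_w + \kappa$ then yields the wall identity $\mbf{B}_{a/b}(z) = \mbf{B}_{a/b}^{MO}(z)$, and the left-ordered product over walls with the $\mc{O}(1)$ prefactor gives $\mbf{M}_{\mc{O}(1)}(z) = \mbf{M}_{\mc{O}(1)}^{MO,s}(z)$.

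I do not anticipate serious obstacles, but one point deserves vigilance: Theorem \ref{Isomorphism-theorem} is formulated only modulo certain central elements, so one must verify that the monodromy construction is insensitive to this central ambiguity. This is true because the only ingredients entering $\mbf{B}_w(z)$ are the universal $R$-matrix, the Cartan element $q^{\Omega}$, and the antipode applied to the $J$-operator -- quantities determined by the non-central part of the Hopf structure. Hence the central twist disappears after the fusion $\mbf{m} \circ (1 \otimes S_{a/b})$, and the equality of wall factors, and thus of the full quantum difference operators, follows.
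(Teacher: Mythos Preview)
Your proposal is correct and follows essentially the same route as the paper: reduce to the wall-by-wall equality $\mbf{B}_{a/b}(z)=\mbf{B}_{a/b}^{MO}(z)$, then use that $R_{a/b}^{\pm}=R_{a/b}^{\pm,MO}$ (established in the proof of Theorem \ref{Isomorphism-theorem}) together with the fact that the fusion operators, and hence the monodromy operators, are uniquely determined by the wall $R$-matrices via the ABRR equation. Your additional remarks on the central ambiguity and on matching the index sets are reasonable elaborations of what the paper leaves implicit.
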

\begin{proof}
This is equivalent to prove that the quantum difference operators $\mc{B}^{s}_{\mc{O}(1)}$ on both sides are the same. In fact, since the quantum difference operators $\mc{B}^s_{\mc{O}(1)}$ is written as the ordered product of the monodromy operators $\mbf{B}_{w}(z)$, and the monodromy operators come from the fusion operators $\mbf{J}_{w}^{\pm}(z)$, and the fusion operators are unique determined by the $R$-matrix $R_{w}^{\pm}$. While the $R$-matrix for each wall are the same, thus the quantum difference operators are the same.
\end{proof}

\subsection{Extension of the isomorphism to the whole algebra}

In this subsection we compare the isomorphism of $\mc{B}_{\infty}$ and $U_{q}(\mf{g}_{\infty})$, i.e. the wall subalgebra at infinity generated by the geometric $R$-matrix $\mc{R}^{\infty}(u)$ at infinite slope.

It is known that the geometric $R$-matrix $\mc{R}^{\infty}(u)$ is given as:
\begin{align}
\mc{R}^{\infty}(u)=\frac{\wedge^*N_{X^{\sigma}\subset X}^{+}}{\wedge^*N_{X^{\sigma}\subset X}^{-}}\in\text{End}(K_{T}(X^{\sigma}))
\end{align}

Without loss of generality, we take $X=M(n,2)$ and $X^{\sigma}=\sqcup_{n_1+n_2=n}M(n_1,1)\times M(n_2,1)$. In this case the $R$-matrix $\mc{R}^{\infty}(u)$ is given by:
\begin{equation}
\begin{aligned}
\mc{R}^{\infty}(u)=&\prod_{n_1+n_2=n}\frac{\wedge^*N^{+}_{M(n_1,1)\times M(n_2,1)\subset M(n,2)}}{\wedge^*N^{-}_{M(n_1,1)\times M(n_2,1)\subset M(n,2)}}\\
=&\frac{\wedge^*(\frac{q_1V'}{V''})\wedge^*(\frac{q_2V'}{V''})\wedge^*(\frac{q^{-1}V'}{V''})}{\wedge^*(\frac{q_1^{-1}V'}{V''})\wedge^*(\frac{q_2^{-1}V'}{V''})\wedge^*(\frac{qV'}{V''})}\frac{\wedge^*(\frac{V'}{W''})}{\wedge^*(\frac{q^{-1}V'}{W''})}\frac{\wedge^*(\frac{qW'}{V''})}{\wedge^*(\frac{W'}{V''})}
\end{aligned}
\end{equation}

Here $V',V''$ are the tautological vector bundle over $M(n_1,1)$ and $M(n_2,1)$. $W',W''=\mbb{C}$ are the trivial bundle over $M(n_1,1)$ and $M(n_2,1)$ corresponding to the framing vectors.

The slope subalgebra $\mc{B}_{\infty}$ of infinite slope is generated by $\{h_{0,\pm d}\}$ and the corresponding universal $R$-matrix is given by:
\begin{equation}
\begin{aligned}
R_{\infty}=\exp[-\sum_{d=1}^{\infty}\frac{(q_1^{d/2}-q_1^{-d/2})(q_2^{d/2}-q_2^{-d/2})(q^{d/2}-q^{-d/2})}{d}h_{d}\otimes h_{-d}]
\end{aligned}
\end{equation}

\begin{prop}
The universal $R$-matrix of infinite slope $R_{\infty}$ coincides with $\mc{R}^{\infty}(u)$ up to a function $N(u)$.
\end{prop}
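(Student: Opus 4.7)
The plan is to compare both operators as diagonal operators on the fixed-point basis $|\lambda\rangle\otimes|\mu\rangle$ of $K_T(M(n_1,1))\otimes K_T(M(n_2,1))$ and show that their ratio is a scalar depending only on the equivariant parameters. First I would observe that both sides act diagonally. The geometric $R$-matrix $\mc{R}^\infty(u) = \wedge^*N^+/\wedge^*N^-$ is diagonal by equivariant localization, with eigenvalue obtained by evaluating the exterior powers at fixed-point characters. The universal $R$-matrix $R_\infty$ is diagonal because on the Fock representation $c_2\mapsto 1$, which forces all the $h_d$'s to commute among themselves, and the formula for $\langle\mu|h_{\pm d}|\lambda\rangle = \delta^\mu_\lambda(\dots)$ gives their diagonal eigenvalues. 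So the claim reduces to the equality of scalar eigenvalues up to a factor independent of $(\lambda,\mu)$.

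Next I would take logarithms and use the identity $\log\wedge^*(X) = -\sum_{n\geq 1} p_n(X)/n$ in terms of Adams operations to expand $\log\mc{R}^\infty(u)|_{(\lambda,\mu)}$. Since $p_n(aV'/V'') = a^n p_n(V')|_\lambda \cdot p_n((V'')^*)|_\mu$, each of the six main factors contributes to a cross term of the form $p_n(V')|_\lambda\cdot p_n((V'')^*)|_\mu$, while the framing factors $\wedge^*(V'/W'')/\wedge^*(q^{-1}V'/W'')$ and $\wedge^*(qW'/V'')/\wedge^*(W'/V'')$ contribute single-variable terms proportional to $p_n(V')|_\lambda$ or $p_n((V'')^*)|_\mu$ alone. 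On the algebraic side, I would substitute the Fock-space formulas $h_d|_\lambda = u_1^d/[(1-q_1^d)(1-q_2^d)] - p_d(V')|_\lambda$ and $h_{-d}|_\mu = -u_2^{-d}/[(1-q_1^{-d})(1-q_2^{-d})] + p_{-d}(V'')|_\mu$ into the exponential formula for $R_\infty$ and expand each product $h_d|_\lambda\cdot h_{-d}|_\mu$ into cross, single-variable, and constant contributions.

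The matching of the cross terms is the conceptual core of the proof. Summing the weights $a^n$ with appropriate signs over the six main factors and using the identity
\[
-(q_1^n+q_2^n+q^{-n}) + (q_1^{-n}+q_2^{-n}+q^n) = (q_1^{n/2}-q_1^{-n/2})(q_2^{n/2}-q_2^{-n/2})(q^{n/2}-q^{-n/2}),
\]
one finds the coefficient of $\tfrac{1}{n}p_n(V')|_\lambda\, p_n((V'')^*)|_\mu$ in $\log\mc{R}^\infty(u)|_{(\lambda,\mu)}$ to be exactly this product, while on the universal side it comes from the $(-p_d(V'))\cdot p_{-d}(V'')$ contribution in the expansion of $h_d|_\lambda\cdot h_{-d}|_\mu$ together with the overall sign in $\log R_\infty = -\sum_d A_d/d\cdot h_d\otimes h_{-d}$. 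Using the intermediate identity $(q_i^{d/2}-q_i^{-d/2})/(1-q_i^{\mp d}) = \pm q_i^{\pm d/2}$, the two coefficients are equal.

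The remaining single-variable terms must cancel between $\mc{R}^\infty(u)$ and $R_\infty$, so that the difference $\log\mc{R}^\infty(u) - \log R_\infty$ is $(\lambda,\mu)$-independent and defines $\log N(u)$. The role of the framing factors is exactly to absorb the terms that $R_\infty$ produces from the constant parts of $h_d|_\lambda$ and $h_{-d}|_\mu$. The main obstacle I expect is the careful bookkeeping of this cancellation: tracking signs, normalization factors, and possible determinantal contributions of the form $\det V'|_\lambda/\det V''|_\mu$ that arise when converting $\wedge^*N^-$ from its natural expression in terms of $V'^*V''$ into the $V'/V''$-notation used in the paper's stated formula. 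These determinant factors must combine with the residual constant terms of the Heisenberg generators to give a genuine scalar function $N(u)$ of the equivariant parameters only, and verifying this directly is the most delicate part of the argument.
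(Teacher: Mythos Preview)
Your proposal is correct and follows essentially the same route as the paper: both sides are diagonal on the fixed-point basis of $K(1)\otimes K(1)$, so one substitutes the eigenvalues of $h_{\pm d}$ into the exponential defining $R_\infty$, expands into constant, $\lambda$-only, $\mu$-only, and cross pieces, and matches these against the localized normal-bundle expression for $\mc{R}^\infty(u)$. Your Adams-operation formulation $\log\wedge^*(X)=-\sum_n p_n(X)/n$ and the identity
\[
-(q_1^n+q_2^n+q^{-n})+(q_1^{-n}+q_2^{-n}+q^n)=(q_1^{n/2}-q_1^{-n/2})(q_2^{n/2}-q_2^{-n/2})(q^{n/2}-q^{-n/2})
\]
are exactly the organizing principles behind the paper's term-by-term computation; the paper just writes the four exponentials out explicitly rather than packaging them this way.

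Two small remarks. First, the paper includes a preliminary step you skip: it invokes Proposition~\ref{intertwine-infty-coproduct} to observe that both $R_\infty$ and $\mc{R}^\infty$ are group-like under $\Delta\otimes 1$, which reduces the general $K(r_1)\otimes K(r_2)$ statement to the case $r_1=r_2=1$ you start from. You should add this sentence. Second, your worry about stray determinantal factors is unnecessary: in the formula the paper gives for $\mc{R}^\infty(u)$ the six tautological factors already appear as matched ratios $\wedge^*(aV'/V'')/\wedge^*(a^{-1}V'/V'')$, so no $\det V'/\det V''$ is produced, and the framing factors $\wedge^*(V'/W'')/\wedge^*(q^{-1}V'/W'')$ and $\wedge^*(qW'/V'')/\wedge^*(W'/V'')$ match the constant-times-variable pieces of $h_d|_\lambda\cdot h_{-d}|_\mu$ on the nose. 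The leftover constant-times-constant piece is the $N(u)$ of the statement.
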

\begin{proof}
By the Proposition \ref{intertwine-infty-coproduct}, we have that:
\begin{align}
(\Delta\otimes1)\mc{R}^{\infty}=\mc{R}^{\infty}_{13}\mc{R}^{\infty}_{23},\qquad(\Delta\otimes1)R_{\infty}=(R_{\infty})_{13}(R_{\infty})_{23}
\end{align}
It is equivalent to prove that $R_{\infty}$ valued in $\text{End}(K(1)\otimes K(1))$ is the same as $\mc{R}^{\infty}(u)$ in $\text{End}(K(1)\otimes K(1))$. We are going to check the proposition on each fixed point basis.

Recall the fixed point basis formula for the generators $h_{0,\pm d}$:
\begin{equation}
\begin{aligned}
\langle\mu|h_{\pm d}|\lambda\rangle=\delta^{\mu}_{\lambda}u^{\pm d}\text{sign}(d)(\frac{1}{(1-q_1^{\pm r})(1-q_2^{\pm r})}-\sum_{\square=(i,j)\in\lambda}q_1^{\pm(i-1)k}q_2^{\pm(j-1)k})
\end{aligned}
\end{equation}

In this case we have that:
\begin{equation}
\begin{aligned}
&\langle\mu|\otimes\langle\lambda|R_{\infty}|\lambda\rangle\otimes|\mu\rangle\\
=&\exp(-\sum_{d=1}^{\infty}\frac{(q_1^{d/2}-q_1^{-d/2})(q_2^{d/2}-q_2^{-d/2})(q^{d/2}-q^{-d/2})u^d}{d}(\frac{1}{(1-q_1^{d})(1-q_2^{d})}-\sum_{\square=(i,j)\in\lambda}q_1^{(i-1)d}q_2^{(j-1)d})\times\\
&\times(\frac{1}{(1-q_1^{-d})(1-q_2^{-d})}-\sum_{\square'=(i',j')\in\mu}q_1^{-(i'-1)d}q_2^{-(j'-1)d}))\\
=&\exp(-\sum_{d=1}^{\infty}\frac{(1-q_1^{d})(1-q_2^{d})(1-q^{-d})u^d}{d}\frac{1}{(1-q_1^d)(1-q_2^d)(1-q_1^{-d})(1-q_2^{-d})})\times\\
&\times\prod_{\square'=(i',j')\in\mu}\exp(\sum_{d=1}^{\infty}\frac{(1-q_1^{d})(1-q_2^{d})(1-q^{-d})u^d}{d}\frac{1}{(1-q_1^d)(1-q_2^d)}q_1^{-(i'-1)d}q_2^{-(j'-1)d})\times\\
&\times\prod_{\square=(i,j)\in\lambda}\exp(\sum_{d=1}^{\infty}\frac{(1-q_1^{d})(1-q_2^{d})(1-q^{-d})u^d}{d}\frac{1}{(1-q_1^{-d})(1-q_2^{-d})}q_1^{(i-1)d}q_2^{(j-1)d})\times\\
&\times\prod_{\square=(i,j)\in\lambda}\prod_{\square'=(i',j')\in\mu}\exp(-\sum_{d=1}^{\infty}\frac{(1-q_1^{d})(1-q_2^{d})(1-q^{-d})u^d}{d}q_1^{(i-i')d}q_2^{(j-j')d})
\end{aligned}
\end{equation}

The first term is independent of the choice of the basis:
\begin{equation}
\begin{aligned}
N(u)=&\exp(-\sum_{d=1}^{\infty}\frac{(1-q_1^{d})(1-q_2^{d})(1-q^{-d})u^d}{d}\frac{1}{(1-q_1^d)(1-q_2^d)(1-q_1^{-d})(1-q_2^{-d})})\\
=&\exp(-\sum_{d=1}^{\infty}\frac{(1-q^{-d})}{(1-q_1^{-d})(1-q_2^{-d})}u^d)
\end{aligned}
\end{equation}

For the second term:
\begin{equation}
\begin{aligned}
&\prod_{\square'=(i',j')\in\mu}\exp(\sum_{d=1}^{\infty}\frac{(1-q_1^{d})(1-q_2^{d})(1-q^{-d})u^d}{d}\frac{1}{(1-q_1^d)(1-q_2^d)}q_1^{-(i'-1)d}q_2^{-(j'-1)d})\\
=&\prod_{\square'=(i',j')\in\mu}\exp(\sum_{d=1}^{\infty}\frac{(1-q^{-d})u^d}{d}q_1^{-(i'-1)d}q_2^{-(j'-1)d})=\prod_{\square'=(i',j')\in\mu}\frac{(1-uq_1^{-i'+1}q_2^{-j'+1})}{(1-uq_1^{-i'}q_2^{-j'})}
\end{aligned}
\end{equation}

For the third term:
\begin{equation}
\begin{aligned}
&\prod_{\square=(i,j)\in\lambda}\exp(\sum_{d=1}^{\infty}\frac{(1-q_1^{d})(1-q_2^{d})(1-q^{-d})u^d}{d}\frac{1}{(1-q_1^{-d})(1-q_2^{-d})}q_1^{(i-1)d}q_2^{(j-1)d})\\
=&\prod_{\square=(i,j)\in\lambda}\exp(-\sum_{d=1}^{\infty}\frac{(1-q^d)u^d}{d}q_1^{(i-1)d}q_2^{(j-1)d})=\prod_{\square=(i,j)\in\lambda}\frac{(1-uq_1^iq_2^j)}{(1-uq_1^{i-1}q_2^{j-1})}
\end{aligned}
\end{equation}

For the last term:
\begin{equation}
\begin{aligned}
&\prod_{\square=(i,j)\in\lambda}\prod_{\square'=(i',j')\in\mu}\exp(-\sum_{d=1}^{\infty}\frac{(1-q_1^{d})(1-q_2^{d})(1-q^{-d})u^d}{d}q_1^{(i-i')d}q_2^{(j-j')d})\\
=&\prod_{\square=(i,j)\in\lambda}\prod_{\square'=(i',j')\in\mu}\frac{(1-uq_1^{i-i'+1}q_2^{j-j'})(1-uq_1^{i-i'}q_2^{j-j'+1})(1-uq_1^{i-i'-1}q_2^{j-j'-1})}{(1-uq_1^{i-i'-1}q_2^{j-j'})(1-uq_1^{i-i'}q_2^{j-j'-1})(1-uq_1^{i-i'+1}q_2^{j-j'+1})}
\end{aligned}
\end{equation}

Combining the result, we have that:
\begin{equation}
\begin{aligned}
R^{\infty}(u)=&N(u)\prod_{\square'=(i',j')\in\mu}\frac{(1-uq_1^{-i'+1}q_2^{-j'+1})}{(1-uq_1^{-i'}q_2^{-j'})}\prod_{\square=(i,j)\in\lambda}\frac{(1-uq_1^iq_2^j)}{(1-uq_1^{i-1}q_2^{j-1})}\times\\
&\times\prod_{\square=(i,j)\in\lambda}\prod_{\square'=(i',j')\in\mu}\frac{(1-uq_1^{i-i'+1}q_2^{j-j'})(1-uq_1^{i-i'}q_2^{j-j'+1})(1-uq_1^{i-i'-1}q_2^{j-j'-1})}{(1-uq_1^{i-i'-1}q_2^{j-j'})(1-uq_1^{i-i'}q_2^{j-j'-1})(1-uq_1^{i-i'+1}q_2^{j-j'+1})}
\end{aligned}
\end{equation}

On the other hand, the geometric $R$-matrix of the infinite slope is given by:
\begin{equation}
\begin{aligned}
\mc{R}^{\infty}(u)=\frac{\wedge^*(\frac{q_1V'}{V''})\wedge^*(\frac{q_2V'}{V''})\wedge^*(\frac{q^{-1}V'}{V''})}{\wedge^*(\frac{q_1^{-1}V'}{V''})\wedge^*(\frac{q_2^{-1}V'}{V''})\wedge^*(\frac{qV'}{V''})}\frac{\wedge^*(\frac{V'}{W''})}{\wedge^*(\frac{q^{-1}V'}{W''})}\frac{\wedge^*(\frac{qW'}{V''})}{\wedge^*(\frac{W'}{V''})}
\end{aligned}
\end{equation}

If we use the localisation formula of the tautological bundle in \ref{character-formula-tauto}, we can show that $\mc{R}^{\infty}(u)$ coincides with $R^{\infty}(u)$ up to a function $N(u)$.

\end{proof}

Now we can state the following theorem:
\begin{thm}
There is an isomorphism of Hopf algebras:
\begin{align}
U_{q_1,q_2}(\hat{\hat{\mf{gl}}}_1)\cong U_{q}(\hat{\mf{g}}_{Q}^{MO})
\end{align}
up to some centres.
\end{thm}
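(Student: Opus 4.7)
The plan is to upgrade the positive-half isomorphism of Theorem \ref{Isomorphism-theorem} to the full Hopf algebra by exploiting the parallel slope/wall factorisations of the two algebras. On the quantum toroidal side we have
\begin{align*}
U_{q_1,q_2}(\hat{\hat{\mf{gl}}}_1) \;=\; \bigotimes^{\rightarrow}_{\mu \in \mbb{Q}\sqcup\{\infty\}} \mc{B}_{\mu},
\end{align*}
while on the MO side, by the general wall factorisation of the geometric $R$-matrix in \eqref{factorisation-geometry}, $U_{q}^{MO}(\hat{\mf{g}}_Q)$ is generated by the wall subalgebras $U_q^{MO}(\mf{g}_w)$ for $w \in \mbb{Q}$ together with the infinite-slope piece $U_q^{MO}(\mf{g}_\infty)$. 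So the task reduces to producing Hopf-compatible isomorphisms at each slope and then assembling them.

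For each finite wall $w = a/b$, the identification $\mc{B}_{a/b}^+ \cong U_q^{MO,+}(\mf{g}_{a/b})$ is exactly what was established in the proof of Theorem \ref{Isomorphism-theorem}, via the coproduct matching \eqref{Coproduct-matching} and the equality $R_{a/b}^\pm = R_{a/b}^{\pm,MO}$ coming from the monodromy computation. Applying the same argument to the opposite Hopf structure (equivalently, to the negative-half stable envelope) gives the corresponding isomorphism of negative halves $\mc{B}_{a/b}^- \cong U_q^{MO,-}(\mf{g}_{a/b})$. Combining the two halves with the Cartan piece internal to $\mc{B}_{a/b} \cong U_q(\hat{\mf{gl}}_1)$ and recalling that the equality of universal $R$-matrices determines the Drinfeld double structure uniquely yields a Hopf algebra isomorphism $\mc{B}_{a/b} \cong U_q^{MO}(\mf{g}_{a/b})$ for every finite wall.

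To cover the infinite slope I would invoke the proposition just proved: the universal $R$-matrix $R_\infty$ of $\mc{B}_\infty$ and the geometric infinite-slope $R$-matrix $\mc{R}^\infty(u)$ agree up to the scalar factor $N(u)$. Since $N(u)$ is a central function depending only on $u$, it lives in the centre of the algebra and contributes nothing to the non-central part of the Hopf structure; thus quotienting out by this central discrepancy produces an isomorphism $\mc{B}_\infty \cong U_q^{MO}(\mf{g}_\infty)$ of Hopf algebras on the non-central generators. Assembling all slopes using the two factorisations, and verifying that the intertwining proposition \ref{intertwine-infty-coproduct} ensures the coproducts match under the slope-ordered tensor product, yields the full isomorphism
\begin{align*}
U_{q_1,q_2}(\hat{\hat{\mf{gl}}}_1) \;\cong\; U_q^{MO}(\hat{\mf{g}}_Q)
\end{align*}
up to centres.

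The main obstacle I expect is handling precisely what the phrase ``up to some centres'' absorbs: the central elements $c_1, c_2$ of $U_{q_1,q_2}(\hat{\hat{\mf{gl}}}_1)$ together with the scalar prefactor $N(u)$ at the infinite slope must be matched with the central elements in the MO algebra coming from the determinantal normalisations of the stable envelope. The cleanest way to package this is to work modulo these central elements from the outset, so that the comparison at each wall (finite or infinite) is an equality of Hopf algebras on the quotient; the slope factorisations on both sides are preserved by taking this quotient, so the assembly step goes through without further work.
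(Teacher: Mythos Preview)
Your wall-by-wall strategy is close in spirit to the paper's argument, but the assembly step is where a real gap sits. You assert that the slope factorisation of $U_{q_1,q_2}(\hat{\hat{\mf{gl}}}_1)$ and the wall generation of $U_q^{MO}(\hat{\mf{g}}_Q)$ let you glue the isomorphisms $\mc{B}_{a/b}\cong U_q^{MO}(\mf{g}_{a/b})$ into a global algebra map. But on the MO side you only know that the wall subalgebras \emph{generate} $U_q^{MO}(\hat{\mf{g}}_Q)$; there is no a priori ordered tensor product factorisation, so you have not checked that the relations \emph{between} different slope subalgebras in $U_{q_1,q_2}(\hat{\hat{\mf{gl}}}_1)$ are respected under your proposed map. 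Proposition~\ref{intertwine-infty-coproduct} tells you the coproducts are compatible once a map exists, but it does not by itself produce a well-defined algebra homomorphism on the whole of $U_{q_1,q_2}(\hat{\hat{\mf{gl}}}_1)$.

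The paper sidesteps this difficulty by importing an external ingredient you do not mention: the main theorem of \cite{N23}, which already provides a Hopf algebra inclusion $U_{q_1,q_2}(\hat{\hat{\mf{gl}}}_1)\hookrightarrow U_q^{MO}(\hat{\mf{g}}_Q)$ for the coproduct $\Delta_\infty$ (and hence, via the wall $R$-matrix equalities $R_{a/b}^{\pm}=R_{a/b}^{\pm,MO}$, for every $\Delta_{a/b}$). With that inclusion in hand, surjectivity is immediate from the equality $\mc{R}^s=R^s\cdot N(u)$ of full $R$-matrices, since $U_q^{MO}(\hat{\mf{g}}_Q)$ is by definition generated by the matrix coefficients of $\mc{R}^s$. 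Your proposal would be repaired by invoking \cite{N23} (or reproving its content) in place of the unproven ``assembly'' step; the rest of what you wrote, including the treatment of the infinite slope and the centre $N(u)$, is essentially correct and matches the paper.
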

\begin{proof}
First we prove that it is an inclusion of Hopf algebras of arbitrary coproduct $\Delta_{a/b}$. This follows from the fact that $R_{a/b}^{\pm}=R_{a/b}^{\pm,MO}$ and the fact that $U_{q,t}(\hat{\hat{\mf{gl}}}_1)\hookrightarrow U_{q}(\hat{\mf{g}}_{Q}^{MO})$ is the inclusion of Hopf algebras of coproduct $\Delta_{\infty}$ from the main theorem in \cite{N23} and the Proposition \ref{intertwine-infty-coproduct}. Moreover, since the geometric $R$-matrix $\mc{R}^{s}$ coincides with the universal $R$-matrix $R^{s}$ evaluated in the modules $K(r_1)\otimes K(r_2)$ up to a constant $N(u)$, this means that these two algebras are isomorphic up to a centre. Thus the proof is finished.

\end{proof}


\begin{thebibliography}{100}
\bibitem[AO21]{AO21}
Aganagic M, Okounkov A. Elliptic stable envelopes[J]. Journal of the American Mathematical Society, 2021, 34(1): 79-133.

\bibitem[AS24]{AS24}
Ayers J, Smirnov A. Capped Vertex Functions for $\text {Hilb}^ n (\mathbb {C}^ 2) $[J]. arXiv preprint arXiv:2406.00498, 2024.

\bibitem[B00]{B00}
Baranovsky V. Moduli of sheaves on surfaces and action of the oscillator algebra[J]. Journal of Differential Geometry, 2000, 55(2): 193-227.

\bibitem[BD23]{BD23}
Botta T M, Davison B. Okounkov's conjecture via BPS Lie algebras[J]. arXiv preprint arXiv:2312.14008, 2023.

\bibitem[BM15]{BM15}
Balagović M. Degeneration of trigonometric dynamical difference equations for quantum loop algebras to trigonometric Casimir equations for Yangians[J]. Communications in Mathematical Physics, 2015, 334(2): 629-659.

\bibitem[BT19]{BT19}
Bershtein M, Tsymbaliuk A. Homomorphisms between different quantum toroidal and affine Yangian algebras[J]. Journal of Pure and Applied Algebra, 2019, 223(2): 867-899.

\bibitem[D22]{D22}
Dinkins H. Exotic quantum difference equations and integral solutions[D]. The University of North Carolina at Chapel Hill, 2022.

\bibitem[D23]{D23}
Dinkins H. Elliptic stable envelopes of affine type A quiver varieties[J]. International Mathematics Research Notices, 2023, 2023(15): 13426-13476.

\bibitem[FT11]{FT11}
Feigin B L, Tsymbaliuk A I. Equivariant K-theory of Hilbert schemes via shuffle algebra[J]. 2011.


\bibitem[M98]{M98}
Macdonald I G. Symmetric functions and Hall polynomials[M]. Oxford university press, 1998.

\bibitem[MO12]{MO12}
Maulik D, Okounkov A. Quantum groups and quantum cohomology[J]. arXiv preprint arXiv:1211.1287, 2012.


\bibitem[N14]{N14}
Negu\c{t} A. The shuffle algebra revisited[J]. International Mathematics Research Notices, 2014, 2014(22): 6242-6275.

\bibitem[N15]{N15}
Negu\c{t} A. Quantum algebras and cyclic quiver varieties[M]. Columbia University, 2015.


\bibitem[N16]{N16}
Negu\c{t} A. The m/n Pieri rule[J]. International Mathematics Research Notices, 2016, 2016(1): 219-257.

\bibitem[N23]{N23}
Negut A. Quantum loop groups and K-theoretic stable envelopes [J][J]. arXiv preprint arXiv:2303.12041, 2023.


\bibitem[N99]{N99}
Nakajima H. Lectures on Hilbert schemes of points on surfaces[M]. American Mathematical Soc., 1999.

\bibitem[O21]{O21}
Okounkov A. Inductive construction of stable envelopes[J]. Letters in Mathematical Physics, 2021, 111: 1-56.

\bibitem[OP10]{OP10}
Okounkov A, Pandharipande R. The quantum differential equation of the Hilbert scheme of points in the plane[J]. Transformation Groups, 2010, 15: 965-982.

\bibitem[OP10-2]{OP10-2}
Okounkov A, Pandharipande R. Quantum cohomology of the Hilbert scheme of points in the plane[J]. Inventiones mathematicae, 2010, 179(3): 523-557.

\bibitem[OS22]{OS22}
Okounkov A, Smirnov A. Quantum difference equation for Nakajima varieties[J]. Inventiones mathematicae, 2022, 229(3): 1203-1299.

\bibitem[S14]{S14}
Smirnov A. Polynomials associated with fixed points on the instanton moduli space[J]. arXiv preprint arXiv:1404.5304, 2014.

\bibitem[S16]{S16}
Smirnov A. Rationality of capped descendent vertex in $ K $-theory[J]. arXiv preprint arXiv:1612.01048, 2016.

\bibitem[S20]{S20}
Smirnov A. Elliptic stable envelope for Hilbert scheme of points in the plane[J]. Selecta Mathematica, 2020, 26(1): 1-57.

\bibitem[S21]{S21}
Smirnov A. Quantum differential and difference equations for $\mathrm {Hilb}^{n}(\mathbb {C}^ 2) $[J]. arXiv preprint arXiv:2102.10726, 2021.

\bibitem[SV13]{SV13}
Schiffmann O, Vasserot E. The elliptic Hall algebra and the K-theory of the Hilbert scheme of A 2[J]. 2013.

\bibitem[Z23]{Z23}
Zhu T. Quantum Difference equation for the affine type $ A $ quiver varieties I: General Construction[J]. arXiv preprint arXiv:2308.00550, 2023.

\bibitem[Z24]{Z24}
Zhu T. From quantum difference equation to Dubrovin connection of affine type A quiver varieties[J]. arXiv preprint arXiv:2405.02473, 2024.

\bibitem[Z24-2]{Z24-2}
Zhu T. Quantum difference equations and Maulik-Okounkov quantum affine algebras of affine type $ A$[J]. arXiv preprint arXiv:2408.00615, 2024.
\end{thebibliography}
\end{document}